\newbox\anglebox 
 \def\pbangle{\copy\anglebox}
\newbox\angleboxr 
 \def\pbangler{\copy\angleboxr}
\newbox\sanglebox 
 \def\spbangle{\copy\sanglebox}
\newbox\sangleboxr 
\newbox\sangleboxf 
\newbox\angleboxf 
 \def\pbanglef{\copy\angleboxf}
\newbox\sangleboxfr 
\newbox\angleboxfr 
\def\cover{\to/-|>/}
\def\subobject{\ \to/|>->/}
\def\embedd{\to/^{ (}->/}
\newcommand{\pair}[1]{\ensuremath{\langle {#1} \rangle}}
\newcommand{\homset}[3]{\ensuremath{\operatorname{Hom}_{#1}\!\left({#2},{#3}\right)}}
\newcommand{\mb}{\ensuremath{\mathbin}}
\newcommand{\cat}[1]{\ensuremath{\mathcal{#1}}}
\newcommand{\thry}[1]{\ensuremath{\mathbb{#1}}}
\newcommand{\synt}[2]{\ensuremath{\mathcal{#1}_{\mathbb{#2}}}}
\newcommand{\alg}[1]{\ensuremath{\mathbf{#1}}}
\newcommand{\mng}[1]{\ensuremath{\mathrm{#1}}}
\newcommand{\modcat}[1]{\ensuremath{\mathrm{Mod}_{#1}}}
\newcommand{\modin}[2]{\ensuremath{\mathrm{Mod}_{#1}(#2)}}
\newcommand{\topo}[1]{\ensuremath{\mathscr{#1}}}
\newcommand{\classtop}{\ensuremath{\mathbf{Set}[\theory]}}
\newcommand{\Sets}{\ensuremath{\mathbf{Sets}}}
\newcommand{\op}[1]{\ensuremath{{#1}^\mathrm{op}}}
\newcommand{\sh}[1]{\protect\ensuremath{\operatorname{Sh}(\mathcal{#1})}}
\newcommand{\Sh}[1]{\protect\ensuremath{\operatorname{Sh}\!\left(#1\right)}}
\newcommand{\Form}{\protect\ensuremath{\operatorname{Form}}}
\newcommand{\Mod}{\protect\ensuremath{\operatorname{Mod}}}
\newcommand{\sem}[1]{\ensuremath{[\![{#1}]\!]}}
\newcommand{\csem}[2]{\ensuremath{[\![{#1}\mb|{#2}]\!]}}
\newcommand{\fins}[1]{\exists {#1}\mathpunct .}
\newcommand{\finst}[2]{\exists {#1}\mathord :{#2}\mathpunct .}
\newcommand{\theory}{\ensuremath{\mathbb{T}}}
\newcommand{\cterm}[2]{\ensuremath{\left \{ {#1}\ \; \vrule \; \ {#2}\right \}}}
\newcommand{\syntob}[2]{\ensuremath{[{#1}\;|\;{#2}]}}
\newcommand{\funksjon}[2]{\ensuremath{\operatorname{#1} \left( #2 \right)}}
\newcommand{\Eqsheav}[2]{\protect\ensuremath{\operatorname{Sh}_{#1}(#2)}}
\newcommand{\sox}[2]{\ensuremath{\csem{#1}{#2}_{X_{\theory}}}}
\newcommand{\bopen}[1]{\ensuremath{\langle\!\! \langle {#1} \rangle\!\! \rangle}}
\title{First-Order Logical Duality}
\author{Steve Awodey\\
\small Department of Philosophy\\
\small Carnegie Mellon University\\
\small Pittsburgh, PA 15217\\
\small USA\\
\small awodey@cmu.edu
\and Henrik Forssell\thanks{Corresponding author. Present address: Dept.\ of Mathematics, Stockholm University, SE - 106 91 Stockholm, Sweden.}\\
\small Department of Philosophy\\
\small Carnegie Mellon University\\
\small Pittsburgh, PA 15217\\
\small USA\\
\small henrikforssell@gmail.com
}
\begin{document}
%
%
\maketitle

\begin{abstract}
\noindent From a logical point of view, Stone duality for Boolean algebras relates theories in classical propositional logic and their collections of models.  The theories can be seen as presentations of Boolean algebras, and the collections of models can be topologized in such a way that the theory can be recovered from its space of models.  The situation can be cast as a formal duality relating two categories of syntax and semantics, mediated by homming into a common dualizing object, in this case $2$.

In the present work, we generalize the entire arrangement from propositional to first-order logic, using a representation result of Butz and Moerdijk.  Boolean algebras are replaced by Boolean categories presented by theories in first-order logic, and spaces of models are replaced by topological groupoids of models and their isomorphisms.  A duality between the resulting categories of syntax and semantics, expressed primarily in the form of a contravariant adjunction, is established by homming into a common dualizing object, now $\Sets$, regarded once as a boolean category, and once as a groupoid equipped with an intrinsic topology.

The overall framework of our investigation is provided by topos theory.  Direct proofs of the main results are given, but the specialist will recognize toposophical ideas in the background.  Indeed, the duality between syntax and semantics is really a manifestation of that between algebra and geometry in the two directions of the geometric morphisms that lurk behind our formal theory.  Along the way, we give an elementary proof of Butz and Moerdijk's result in logical terms.

\noindent \textbf{Keywords:} First-order logic, duality; categorical logic; topos theory; topological semantics.

\noindent \textbf{AMS classification codes:} 03G30; 18B25; 18C10; 18C50.
\end{abstract}
\tableofcontents
%
%
\section*{Introduction}
\label{section: Introduction}
We present an extension of Stone duality for Boolean algebras from classical propositional logic to classical
first-order logic. In broad strokes, the leading idea is to take the traditional logical distinction between
syntax and semantics and analyze it in terms of the classical mathematical distinction between algebra and
geometry, with syntax corresponding to algebra and semantics to geometry. Insights from category theory allow
us to recognize a certain duality between the notions of algebra and geometry. We see a first glimpse of this in
Stone's duality theorem for Boolean algebras, the categorical formulation of which states that a category of
`algebraic' objects (Boolean algebras) is the categorical dual of a category of `geometrical' objects (Stone
spaces). ``Categorically dual'' means that the one category is opposite to the other, in that it can be obtained (up to equivalence)
from the other by formally reversing the morphisms. In a more far reaching manner, this form of algebra-geometry
duality is exhibited in modern algebraic geometry as reformulated in the language of schemes in the Grothendieck
school, e.g.\ in the duality between the categories of commutative rings and the category of affine schemes.

On the other hand, we are informed by the category theoretic analysis of logic that it is
closely connected with algebra, in the sense that logical theories can be regarded as categories and suitable
categories can be presented as logical theories. For instance, Boolean algebras can be seen as classical propositional
theories, categories with finite products can be seen as equational theories, Boolean coherent categories
as theories in classical first-order logic, and elementary toposes -- e.g.\ the topos of sheaves on a space -- as theories in higher-order
intuitionistic logic. Thus the study of these algebraic objects has a logical interpretation and, vice versa,
reasoning in or about logical theories has application in their corresponding algebraic objects. With the
connection between algebra and logic in hand, instances of the algebra-geometry duality can be seen to manifest
a syntax-semantics duality between an algebra of syntax and a geometry of semantics. This notion of syntax as `dual to semantics' is, expectedly, one which ignores presentation and other features which, so to speak, models can not distinguish.  In the propositional case, one passes from a propositional theory to a Boolean algebra by constructing the Lindenbaum-Tarski algebra of the theory, a construction which identifies provably equivalent formulas (and orders them by provable implication). Thus any two complete theories, for instance, are `algebraically equivalent' in the sense of having isomorphic Lindenbaum-Tarski algebras.  The situation is precisely analogous to a presentation of an algebra by generators and relations: a logical theory corresponds to such a presentation, and two theories are equivalent if they present `the same' -- i.e.\ isomorphic -- algebras.
A similar construction is used to obtain, for  a classical first-order theory, its `corresponding' Boolean coherent category, resulting in a similar notion of algebraic or categorical equivalence.

Given this connection between formal theories and categories, Stone duality manifests a syntax-semantics duality for propositional logic as follows. While a Boolean algebra can be regarded as a propositional theory modulo `algebraic' equivalence, on the other hand a Stone space can be seen as a space of corresponding two-valued models of such a theory. A model of a propositional theory is of course just a valuation of the propositional letters, or equivalently, a Boolean homomorphic valuation of all formulas. Thus we obtain the set of models of the theory corresponding to a Boolean algebra by taking morphisms in the category of Boolean algebras from the given algebra into the two-element Boolean algebra, 2,
\begin{eqnarray}\label{eq:stonerep}
\modcat{\cat{B}}\cong\homset{\alg{BA}}{\cat{B}}{2}.
\end{eqnarray}
And with a suitable topology in place---given in terms of the elements of the Boolean algebra $\cat{B}$---we can retrieve $\cat{B}$ from the space of models $\modcat{\cat{B}}$ by taking morphisms in the category of Stone spaces from it into the two-element Stone space, 2,
\begin{eqnarray*}
\cat{B} \cong \homset{\alg{Stone}}{\modcat{\cat{B}}}{2}
\end{eqnarray*}
Here, the two-element set, 2, is in a sense  living a `dual' life, and `homming into 2' forms a contravariant adjunction between the `syntactical' category of Boolean algebras and the category of topological spaces,
which, moreover, becomes an equivalence once we restrict to the `semantical' subcategory of Stone spaces.
\[\bfig
\morphism|b|/{@{>}@/_1em/}/<1250,0>[\alg{BA}`\alg{Stone};\homset{\alg{BA}}{-}{2}]
\morphism/{@{<-}@/^1em/}/<1250,0>[\alg{BA}`\alg{Stone};\homset{\alg{Stone}}{-}{2}]
\place(625,0)[\simeq]
\efig\]

Our construction for first-order logic generalizes this set-up by, on the `syntax' side, representing first-order theories by Boolean coherent categories. On the semantical side we have, for each theory, a space of models, augmented with a space consisting of the isomorphisms between those models, such that these spaces form a \emph{topological groupoid}, that is to say, such that the composition, domain and codomain, inverse arrow and identity arrow maps are all continuous. Our `semantic' side is, accordingly, a category consisting of topological groupoids and continuous homomorphisms between them. Where in Stone Duality one considers the lattice of open sets of a space in order to recover a Boolean algebra, we consider the topos (or `generalized space') of so-called \emph{equivariant sheaves} on a topological groupoid in order to recover a Boolean coherent category. In particular, we show that the topos of equivariant sheaves on the topological groupoid of models and isomorphisms of a theory is the so-called classifying topos of (the Morleyization of) the theory, from which it is known that the theory can be recovered up to a notion of equivalence. Here we build upon earlier results in \cite{butz:96} and \cite{butz:98b} to the effect that any such topos can be represented by a topological groupoid constructed from its points. Our construction differs from the one given there mainly in presentation, but also in choosing a simpler cover which is better suited for our purpose.


The semantic representation of this topos can also be understood from the perspective of definable sets.
%
%
Suppose we have a theory, \theory, in first order logic or some fragment of it, and that $\phi(\alg{x})$ is some formula in the language of the theory. Then $\phi(\alg{x})$ induces a definable set functor,
\[
\sem{\phi(\alg{x})} : \modcat{\theory}\to \Sets
%
%
\]
from the groupoid of \theory-models to the category of sets, which sends a model \alg{M} to the extension, $\sem{\phi(\alg{x})}^{\alg{M}}$, of $\phi(\alg{x})$ in \alg{M}. The question is, then, whether these definable set functors can somehow be characterized among all functors of the form $\modcat{\theory}\rightarrow\Sets$, so that the theory can be recovered from its models in terms of them. Notice, incidentally, that in case of a positive answer, the category of sets takes on the role of a dualizing object, in analogy with \alg{2} for Stone duality. For the models of a theory can be seen as suitable functors from the algebraic representation of the theory, \synt{C}{T}, into \Sets, so that both obtaining the models from the theory and recovering the theory from the models is done by `homming' into \Sets,
\begin{align*}
\modcat{\theory} &\simeq \homset{\!}{\synt{C}{T}}{\Sets}\\
\synt{C}{T} &\simeq\homset{\!}{\modcat{\theory}}{\Sets}
\end{align*}
Here the hom-sets must be suitably restricted from all functors to just those preserving the relevant structure, the determination of which is part of the task at hand.

Now, positive, and elegant, answers to the question  of the characterization of definable set functors exist, to begin with, for certain fragments of first-order logic. For algebraic theories---axiomatized only by equations in languages with only function symbols (and equality)---the categories of models (algebras) have all limits and colimits, and \emph{Lawvere duality} tells us that an algebraic theory \theory\ can be recovered (up to splitting of idempotents) from its category of models in the form of those functors $\modcat{\theory}\to\Sets$ which preserve limits, filtered colimits, and regular epimorphisms (see \cite{lawvere:63},\cite{adameklawvererosicky:03}).  Expanding from the algebraic case, recall, e.g.\ from \cite[D1.1.]{elephant1}, that the \emph{Horn} formulas over a first-order signature are those formulas which are constructed using only connectives $\top$ and $\wedge$. Allowing also existential quantification brings us to \emph{regular} formulas. A Horn (regular) theory is one which can be axiomatized using sequents involving only Horn (regular) formulas. In between, a Cartesian theory is a regular theory which can be axiomatized using only formulas that are Cartesian relative to the theory, in the sense, briefly, that existential quantification does not occur except under a certain condition.  Now, the category \modcat{\theory} of models and homomorphisms of a Cartesian theory \theory\ has limits and filtered colimits (but not, in general, regular epis), and \emph{Gabriel-Ulmer duality} (see e.g.\ \cite{adamekandrosicky:94})
informs us, among other things, that the definable set functors for  Cartesian formulas (relative to \theory) can be characterized as the limit and filtered colimit preserving functors $\modcat{\theory}\rightarrow\Sets$   (and that the theory can be recovered in terms of them).
If we allow for unrestricted existential quantification and pass to regular logic, then categories of models need no longer have arbitrary limits. But they still have products and filtered colimits, and, as shown by M. Makkai \cite{makkai:90}, the definable set functors for regular formulas can now be characterized as those functors $\modcat{\theory}\rightarrow\Sets$ that preserve precisely that.

Adding the connectives $\bot$ and $\vee$ to regular logic gives us the fragment known as \emph{coherent logic} (see \cite[D1.1.]{elephant1}), in which a far greater range of theories can be formulated. The theory of fields, for instance, cannot be expressed as a regular theory (since the category of fields does not have arbitrary products), but it can be expressed as a coherent theory (see \cite[D1.1.7.(h)]{elephant1}). (In fact, it is a  \emph{decidable} coherent theory, where ``decidable'' means, here, that there is an \emph{inequality predicate}, in the sense of a coherent formula which is provably the complement of equality.) Moreover, any classical first-order theory can be \emph{Morleyized} to yield a coherent theory with the same category of models, see \cite[D1.5.13]{elephant1} (we take the morphisms between models of a classical first-order theory to be the elementary embeddings). Thus the categories of models of coherent theories can not, in general, be expected to have more structure than those for classical first-order theories. What they do have are ultra-products.  Although ultra-products are not an intrinsic feature of categories of models (for coherent theories), in the sense that they are not a categorical invariant,  Makkai \cite{makkai:87} shows that model categories and the category of sets can be equipped with a notion of ultra-product structure---turning them into so-called \emph{ultra-categories}---which allows for the characterization of definable set functors as those functors that preserve this additional structure. Moreover, this approach can be modified in  the case of classical first-order theories so that only the ultra-\emph{groupoids} of models and isomorphisms, equipped with ultra-product structure, need be considered, see \cite{makkai93}.

Our approach, similarly, relies on equipping the models of a theory with external structure, but in our case the structure is topological. The background for this choice of structure is the representation result of Butz and Moerdijk \cite{butz:98b} to the effect that any topos with enough points can be represented as a topos of equivariant sheaves on a topological groupoid constructed from the points of the topos and geometric transformations between them. This means that for any geometric theory (see below) with enough models---that is, such that sequents true in all (ordinary) models of the theory are provable in the theory---its so-called classifying topos can be represented as equivariant sheaves on a topological groupoid constructed from models of the theory and isomorphisms between them. Since a geometric theory can be recovered up to Morita equivalence from its classifying topos, this means that it can be so recovered from its models and isomorphisms by equipping them with topological structure.

Geometric logic is obtained by adding the connective $\bigvee$ (infinite disjunction) to coherent logic (see \cite[D1.1.]{elephant1}). Thus the class of geometric theories subsumes that of (Morleyized) first-order theories, so that Butz and Moerdijk's theorem applies to first-order theories. Moreover, restricting to the latter  allows for a simplification of the topology employed. We give a thorough presentation of this (simplified) topology from a logical perspective, that is, in terms only of theories and their models. We then give a direct and elementary proof for Butz and Moerdijk's representation theorem in this setting, with very little topos theoretic machinery, and we give a direct description of the generic topos model of a theory in the process.    We carry this construction out for  decidable coherent theories, corresponding to (small) decidable coherent categories (``decidable'' meaning, in the categorical setting, that diagonals are complemented). As we remarked, the theory of fields is a notable example of such a theory, and the class of decidable coherent theories  includes all classical first-order theories in the sense that the Morleyization of a classical theory is decidable coherent. Accordingly, our construction restricts to the classical first-order case, corresponding to Boolean coherent theories.

The first part of the paper (Section \ref{Section: Reprentation Theorem}) contains the elementary proof of the (adjusted) representation theorem, involving  the characterization of definable set functors for a theory and the recovery of the theory from its groupoid of models in terms of them. The upshot is that definable sets can be characterized as being, in a sense, \emph{compact}; not by regarding each individual set as compact, but by regarding the definable set functor as being a compact object in a suitable category. Pretend, for a moment, that the models of a theory \theory\ form a set and not a proper class, and suppose, for simplicity, that the models are all disjoint. A definable set functor from the groupoid of \theory-models and isomorphisms,
%
\[\sem{\phi(\alg{x})}^{(-)} : \modcat{\theory}\to \Sets\]
can, equivalently, be considered as
%
%
a set (indexed) over the set $(\modcat{\theory})_0$ of models,
\begin{equation}\label{eq:sheafrepproj}
\coprod_{\alg{M}\vDash\theory}\sem{\phi(\alg{x})}^{\alg{M}}\to^p (\modcat{\theory})_0
\end{equation}
with $p^{-1}(\alg{M})=\sem{\phi(\alg{x})}^{\alg{M}}$, together with an action on this set by the set $(\modcat{\theory})_1$ of isomorphisms,
%
%
%
\begin{equation}\label{eq:sheafrepact}
\left(\modcat{\theory}\right)_1\times_{(\modcat{\theory})_0}\coprod_{\alg{M}\vDash\theory}\sem{\phi(\alg{x})}^{\alg{M}}
\to^\alpha \coprod_{\alg{M}\vDash\theory}\sem{\phi(\alg{x})}^{\alg{M}}
\end{equation}
%
%
such that for any \theory-model isomorphism, $\alg{f}:\alg{M}\rightarrow\alg{N}$, and element, $\alg{m}\in \sem{\phi(\alg{x})}^{\alg{M}}$, we have $\alpha(\alg{f},\alg{m})=\alg{f}(\alg{m})\in \sem{\phi(\alg{x})}^{\alg{N}}$.  Now, if the set of \theory-models and the set of isomorphisms are topological spaces forming a topological groupoid, then we can ask for the collection $$\coprod_{\alg{M}\vDash\theory}\sem{\phi(\alg{x})}^{\alg{M}}$$ of elements of the various definable sets to be a space, in such a way that the projection function $p$ in \eqref{eq:sheafrepproj} is a local homeomorphism, and such that the action $\alpha$ in \eqref{eq:sheafrepact} is continuous. This makes definable set functors into equivariant sheaves on the groupoid, and we show that in the topos of all such sheaves they can be characterized as the compact decidable objects (up to a suitable notion of equivalence).

The second part (Section  \ref{Section: Duality}) concerns the construction, based on the representation result of the first part, of a duality between the category of decidable coherent categories (representing theories in first-order logic) and the category of topological groupoids of models. This takes the form of an adjunction between the category of decidable coherent categories and a category of `weakly coherent' topological groupoids, such that the counit component of the adjunction is an equivalence, up to pretopos completion.  Accordingly, the adjunction restricts to a duality, in a suitable 2-categorical sense, between decidable pretoposes and groupoids of models. (We otherwise do not expand of the 2-categorical aspects of the construction, preferring for a simpler presentation to fix certain choices `on the nose' instead.) We give a characterization of groupoids of models up to Morita equivalence. In line with the Butz-Moerdijk representation result, we introduce a size restriction both on theories and their models (corresponding to the pretence, above, that the collection of models of a theory forms a set). The restriction, given a theory, to a set of models large enough for our purposes can be thought of as akin to the fixing of a `monster' model for a complete theory, although in our case a much weaker saturation property is asked for, and a modest cardinal bound on the size of the models is sufficient.

In summary, we use the prior representation result of Butz and Moerdijk to give a new extension of Stone duality to first-order logic, in the form of a `syntax-semantics' adjunction between decidable coherent categories and `weakly coherent' topological groupoids, with counit components at pretoposes being equivalences. The construction differs from Makkai's \cite{makkai:87,makkai93} in  using topological structure and sheaves instead of structure based on ultra-products, and in restricting to classical Stone duality in the propositional case. Similar to Makkai's construction, it uses the category of sets as a dualizing object, in this case connecting it with the (decidable) object classifier in topos theory. The representation result of Butz and Moerdijk is given a new presentation and elementary proof from a logical perspective. In particular, the topology is simplified and presented in terms only of the signature of the theory. This results in a new description of the decidable object classifier as equivariant sheaves on the groupoid of sets equipped with a simple topology.

This paper is based on \cite{phd} where more details can be found.


%
%
%
%
%
\section{Topological Representation}
\label{Section: Reprentation Theorem}

We show how to recover a classical, first-order theory from its groupoid of models and model-isomorphisms, bounded in size and equipped with topological structure, using the representation theorem of Butz and Moerdijk \cite{butz:98b}.  We present this from a logical perspective, that is, from the
perspective of the syntax and model theory of first-order theories, and we give a self-contained and elementary proof of Butz and Moerdijk's theorem in this setting (while also somewhat simplifying the construction). One can, of course, go back and forth between the logical perspective and the categorical perspective of (Boolean) coherent categories and
set-valued coherent functors. Section \ref{Section: Duality} briefly outlines the translation between the two, and presents a duality between the `syntactical' category of theories and a `semantical' category of model-groupoids.
We start the current section with some preliminaries from categorical logic and topos theory.

\subsection{Coherent Theories, Syntactic Categories, and Classifying Toposes}
\label{Subsection: Theories and Models}

Let $\Sigma$ be a (first-order, possibly many-sorted) signature. Recall that a formula over $\Sigma$ is \emph{coherent} if it is constructed using only the connectives $\top$, $\wedge$, $\exists$, $\bot$, and $\vee$. We consider formulas in suitable contexts, \syntob{\alg{x}}{\phi}, where the context $\alg{x}$ is a list of distinct variables containing (at least) the free variables of $\phi$. A sequent, $\phi\vdash_{\alg{x}}\psi$---where $\alg{x}$ is a suitable context for both $\phi$ and $\psi$---is coherent if both $\phi$ and $\psi$ are coherent. Henceforth we shall not be concerned with axiomatizations, and so we consider a (coherent) \emph{theory} to be a deductively closed set of (coherent) sequents.

Let \theory\ be a coherent (alternatively first-order) theory over a signature, $\Sigma$. Recall that the \emph{syntactic category}, \synt{C}{T}, of
\theory\ has as objects equivalence classes of coherent (alt.\ first-order) formulas in context, e.g.\ \syntob{\alg{x}}{\phi}, which is
equivalent to a formula in context, \syntob{\alg{y}}{\psi}, if the contexts are $\alpha$-equivalent and \theory\
proves the formulas equivalent\footnote{See \cite[D1]{elephant1} for further details. Note that we, unlike
\cite{elephant1}, choose to identify \theory-provably equivalent formulas. The reason is that they define
exactly the same sets, i.e.\ the same definable set functors.}, i.e.\ \theory\ proves the following sequents.
\begin{align*}
\phi & \vdash_{\alg{x}}\, \psi[\alg{x}/\alg{y}]\\
\psi[\alg{x}/\alg{y}] & \vdash_{\alg{x}}\, \phi
\end{align*}
An arrow between two objects, say \syntob{\alg{x}}{\phi} and \syntob{\alg{y}}{\psi} (where we may assume that $\alg{x}$ and $\alg{y}$ are distinct), consists of a class of
\theory-provably equivalent formulas in context, say \syntob{\alg{x},\alg{y}}{\sigma}, such that \theory\ proves
that $\sigma$ is a functional relation between $\phi$ and $\psi$:
\begin{align*}
\sigma  &\vdash_{\alg{x},\alg{y}}\, \phi\wedge\psi\\
\phi  &\vdash_{\alg{x}}\, \fins{\alg{y}}\sigma\\
\sigma\wedge \sigma(\alg{z}/\alg{y})  &\vdash_{\alg{x},\alg{y},\alg{z}}\, \alg{y}=\alg{z}
\end{align*}
If \theory\ is a coherent theory, then \synt{C}{T} is a coherent category. If \theory, in addition, has an
inequality predicate (for each sort), that is, a formula with two free variables (of that sort), $x\neq y$, such that \theory\ proves
\begin{align*}\label{Equation: DC, Axioms of inequality}
x\neq y \wedge x=y &\vdash_{x,y}\, \bot\\
\top &\vdash_{x,y}\, x\neq y \vee x=y
\end{align*}
%
%
then \synt{C}{T} is  \emph{decidable}, in the sense that for each object, $A$, the diagonal, $\Delta:A\mon A\times A$, is complemented as a subobject. We call a
coherent theory which has an inequality predicate (for each sort) a \emph{decidable} coherent theory for that reason (and with
apologies for overloading the term). Finally, if \theory\ is a first-order theory, then \synt{C}{T} is a Boolean
coherent category, i.e. a coherent category such that every subobject is complemented.

Conversely, given a coherent category, \cat{C}, one can construct the coherent \emph{theory,
$\theory_{\cat{C}}$, of} \cat{C} by having a sort for each object and a function symbol for each arrow, and
taking as axioms all sequents which are true under the canonical interpretation of this language in \cat{C} (again, see
\cite{elephant1} for details). A coherent decidable category allows for the construction of a coherent decidable
theory (including an inequality predicate for each sort), and Boolean coherent \cat{C} allows for the
construction of a first-order $\theory_{\cat{C}}$. Thus we can turn theories into categories and categories back
into theories. It is in this sense that we say that (decidable) coherent categories represent (decidable)
coherent theories, and Boolean coherent categories represent first-order theories. (Since Boolean coherent categories are, of course, coherent,  building the Boolean coherent syntactical category of a classical first-order theory and then taking its  coherent internal theory will produce a decidable coherent theory with the same models as the original classical one; thus yielding an alternative, but less economical, way of \emph{Morleyizing} a classical theory than the one presented in \cite[D1.5.13]{elephant1}.)
We show how to recover a
theory from its models in the sense that we recover its syntactic
category, up to pretopos completion. Roughly, the pretopos completion of a theory is the theory equipped with disjoint sums and quotients
of equivalence relations,  see e.g.\ \cite{makkai93}. A theory and its pretopos completion have the same models in (the pretopos) \Sets. Furthermore, since the construction works for coherent decidable theories and these subsume classical first-order theories in the sense above, we carry this out for coherent decidable theories (leaving classical first-order as a special case).
%

The category of models and homomorphisms of a coherent theory \theory\ is equivalent to the category of coherent functors from \synt{C}{T} into the category \Sets\ of sets and functions and natural transformations between them,
\[\modcat{\theory}{}\simeq \homset{}{\synt{C}{T}}{\Sets}\]
and the same holds for models in an arbitrary coherent category, \cat{E},
\[\modcat{\theory}{(\cat{E})}\simeq \homset{}{\synt{C}{T}}{\cat{E}}\]
Indeed, this is the universal property that characterizes ${\synt{C}{T}}$. The same is true for classical first-order theories if ``homomorphism'' is replaced by ``elementary embedding'' (Note that the elementary embeddings between models of a classical first-order theory coincide with the homomorphisms between models of its Morleyization.) We pass freely between considering models traditionally as structures and algebraically as functors. In passing, we note that decidability for coherent theories can be characterized semantically:

\begin{lemma}\label{Lemma: Semantic decidability}
Let \theory\ be a coherent theory over a signature $\Sigma$, and \modcat{\theory} the category of \theory-models and homomorphisms. Then \theory\ is decidable (i.e.\ has an inequality predicate for each sort) if and only if for every \theory-model homomorphism, $\alg{f}:\alg{M}\rightarrow\alg{N}$ and every sort $A$ of $\Sigma$, the component function $f_A:\sem{A}^{\alg{M}}\rightarrow \sem{A}^{\alg{N}}$ is injective.
\begin{proof}This follows from a slight rewriting of the proof of \cite[D3.5.1]{elephant1}.
\end{proof}
\end{lemma}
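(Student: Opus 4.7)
The plan is to handle the forward direction directly from preservation of coherent formulas by homomorphisms, and to reduce the backward direction to a categorical characterization of decidable objects in coherent categories (the slight rewriting of \cite[D3.5.1]{elephant1} referred to in the author's remark).

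For the forward direction, suppose $\theory$ has an inequality predicate $x\neq y$ for sort $A$. In any model $\alg{M}$, the axiom $x\neq y \wedge x=y\vdash_{x,y}\bot$ forces $\sem{x\neq y}^{\alg{M}}$ to be disjoint from the diagonal of $\sem{A}^{\alg{M}}$, and $\top\vdash_{x,y} x\neq y \vee x=y$ forces these two subsets jointly to cover $\sem{A}^{\alg{M}}\times\sem{A}^{\alg{M}}$. Hence $\sem{x\neq y}^{\alg{M}}$ is exactly the set of pairs of distinct elements. Since $x\neq y$ is coherent and homomorphisms preserve coherent formulas, whenever $m_1\neq m_2$ in $\alg{M}$ we obtain $(f_A(m_1),f_A(m_2))\in\sem{x\neq y}^{\alg{N}}$, which by the first axiom implies $f_A(m_1)\neq f_A(m_2)$.

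For the backward direction, I translate the semantic hypothesis into a statement about $\synt{C}{T}$. An inequality predicate on sort $A$ satisfying the two axioms corresponds precisely to a coherent subobject $\syntob{x,y}{x\neq y}$ of $A\times A$ that is a complement of the diagonal $\Delta_A$; conversely, any complement of $\Delta_A$ gives such a predicate. So the existence of an inequality predicate on $A$ is equivalent to the categorical decidability of $A$ as an object of $\synt{C}{T}$. Via the equivalence $\modcat{\theory}\simeq\homset{}{\synt{C}{T}}{\Sets}$, model homomorphisms $\alg{M}\to\alg{N}$ correspond to natural transformations $F\to G$ between the associated coherent functors, with sort-$A$ components matching under the correspondence. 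The hypothesis therefore becomes: every natural transformation between coherent functors $\synt{C}{T}\to\Sets$ has pointwise injective component at each sort $A$. The cited \cite[D3.5.1]{elephant1} (or a mild variant of its proof) concludes from this that $A$ is decidable in $\synt{C}{T}$, yielding the required complement and hence the inequality predicate.

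The main obstacle lies in this last semantic-to-syntactic step. The naive candidate for the complement—a disjunction indexed by all coherent formulas $\phi(x,y)$ with $\phi\wedge x=y\vdash_{x,y}\bot$—need not itself be coherent, since coherent logic only admits \emph{finite} disjunctions. Overcoming this is exactly the content of D3.5.1, whose proof appeals to the completeness/representation theorem for coherent categories: there are enough set-valued coherent functors on $\synt{C}{T}$ to witness a complement of $\Delta_A$ from the assumed pointwise injectivity of natural transformations. The forward direction and the dictionary between inequality predicates and complemented diagonals are essentially routine.
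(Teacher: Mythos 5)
Your proposal is correct and takes essentially the same route as the paper, whose entire proof is the deferral of the nontrivial semantic-to-syntactic direction to a rewriting of the proof of \cite[D3.5.1]{elephant1}; your explicit forward direction (preservation of the coherent formula $x\neq y$ by homomorphisms) and the dictionary between inequality predicates and complemented diagonals in \synt{C}{T} are precisely the ``slight rewriting'' the paper intends. You also correctly locate the only real difficulty—one cannot take the infinite disjunction of all coherent formulas disjoint from the diagonal, and must instead use completeness/Deligne (enough $\Sets$-valued coherent functors) to extract a finite covering subfamily—which is exactly the content of the cited proof.
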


Given a coherent theory \theory\ there exists a so-called \emph{classifying topos}, \classtop, defined by the universal property that the category of \theory-models in any topos \cat{E} is equivalent to the category of geometric morphisms from \cat{E} to \classtop\ and geometric transformations between them (see \cite[D3]{elephant1}),
\[\modin{\theory}{\cat{E}}\simeq\homset{\!}{\cat{E}}{\classtop }\]
The \theory-model in \classtop\ corresponding to the identity geometric morphism $\classtop\to\classtop$ is called the generic model. One can construct \classtop\ `syntactically' by equipping the category \synt{C}{T} with the Grothedieck coverage, $J$, defined by finite epimorphic families,
\[\Sh{\synt{C}{T},J}\simeq\classtop\]
in which case the generic \theory-model is given by the Yoneda embedding
\[y:\synt{C}{T}\to \Sh{\synt{C}{T},J}.\]
%
We show, for decidable coherent theories and with an adjustment of the representation theorem of \cite{butz:98b}, how to represent \classtop\ as the topos of so-called equivariant sheaves on a topological groupoid of \theory-models, and with the generic \theory-model given by definable set functors.
\subsection{Groupoids of Models}
\label{Subsection: Groupoids of models}
Fix a decidable coherent theory \theory\ over a (possibly many-sorted) signature, $\Sigma$. Fix an infinite set $\mathds{S}$ of cardinality $\kappa \geq |\Sigma|$ of `sets' or `elements'. The set $\mathds{S}$ should be thought of as the `universe' from which we construct $\Sigma$-structures, and can be taken (if one prefers) to be a (sufficiently large) initial segment of the set-theoretic universe.  Let $X_{\theory}$ be the set of all $\theory$-models with elements from $\mathds{S}$, and let $G_{\theory}$ be the set of isomorphisms between them. We think of choosing $\mathds{S}$ and $X_{\theory}$ as akin to choosing a monster model, and refer to models in $X_{\theory}$ as $\kappa$\emph{-small}.  We write finite lists or tuples of variables,  elements, and sorts in boldface, $\alg{a}=\pair{a_1,\ldots,a_n}$, with $*$ the concatenation operator, $\alg{a}*\alg{b}=\pair{a_1,\ldots,a_n, b_1,\ldots,b_m}$, and $\star$ the empty list. We also use boldface to indicate models, \alg{M},\alg{N}, and model isomorphisms, \alg{f},\alg{g}, with the component function of \alg{f} at a sort $A$ written $f_A$. We often leave the typing of variables implicit.
\begin{definition}\label{Definition> Logical topology}
The \emph{logical topology} on $X_{\theory}$ is the coarsest topology containing all sets of the following form:
\begin{enumerate}
\item For each sort, $A$, and element, $a\in \mathds{S}$, the set \[\bopen{A,a}:=\cterm{\alg{M}\in X_{\theory}}{a\in \sem{A}^{\alg{M}}}\]
\item For each relation symbol, $R:A_1,\ldots,A_n$, and $n$-tuple, $\alg{a}\in \mathds{S}$, of elements, the set \[\bopen{R,\alg{a}}:=\cterm{\alg{M}\in X_{\theory}}{\alg{a}\in \sem{R}^{\alg{M}}\subseteq \sem{A_1}^{\alg{M}}\times\ldots\times\sem{A_n}^{\alg{M}}}\]
    \textup{(}This extends to nullary relations symbols; if $R$ is a nullary relation symbol, then  \[\bopen{R,\star}=\cterm{\alg{M}\in X_{\theory}}{\alg{M}\models R}\] and $\bopen{R,a}=\emptyset$ for $a\neq\star$.\textup{)}
\item For each function symbol, $f:A_1,\ldots,A_n\rightarrow B$, and n+1-tuple of elements, $\alg{a}* b=\pair{a_1,\ldots,a_n,b}$, the set \[\bopen{f(\alg{a})=b}:=\cterm{\alg{M}\in X_{\theory}}{\sem{f}^{\alg{M}}(\alg{a})=b}\] \textup{(}This extends in the obvious way to include nullary function symbols, i.e.\ constants\textup{)}.
\end{enumerate}
Let $G_{\theory}$ be the set of isomorphisms between the models in $X_{\theory}$, with domain or \emph{surce} and codomain or \emph{target} functions $s,t: G_{\theory}\rightrightarrows X_{\theory}$. The \emph{logical topology} on $I_{\Sigma}$ is the coarsest such that both $s$ and $t$ are continuous and containing all sets of the following form:
\begin{enumerate}[(i)]
\item For each sort, $A$, and pair of elements, $a,b\in \mathds{S}$, the set
 \[\bopen{A,a\mapsto b}:=\cterm{\alg{f}\in G_{\theory}}{a\in \sem{A}^{s(\alg{f})}\ \textnormal{and}\ f_{A}(a)=b}\]
\end{enumerate}
\end{definition}
Note that the logical topology is defined in terms of the signature $\Sigma$, not the logical formulas over $\Sigma$. It is, however, convenient to also note that a \emph{basic} open set of $M_{\Sigma}$ or $X_{\theory}$ can be presented in the form
\begin{equation}\label{Equation: Basic open set}\bopen{\syntob{\alg{x}}{\phi},\alg{a}}=\cterm{\alg{M}\in X_{\theory}}{\alg{a}\in \csem{\alg{x}}{\phi}^{\alg{M}}}\end{equation}
where \syntob{\alg{x}}{\phi} is a Horn formula and $\alg{a}\in \thry{S}$. A straightforward induction on formulas shows that for any geometric formula, $\phi$, with free variables in $\alg{x}$, the set defined by (\ref{Equation: Basic open set}) is open. We write this out for reference.
\begin{Lemma}\label{Lemma: The logical topologies are all the same}
Sets of the form
$\bopen{\syntob{\alg{x}}{\phi},\alg{a}}$
form a basis for the logical topology on $X_{\theory}$, with $\phi$ ranging over all coherent formulas over $\Sigma$.
\end{Lemma}
Similarly, a basic open set of $G_{\theory}$ can be presented, by stating a \emph{source}, a \emph{preservation}, and a \emph{target condition}, in the form
%
\begin{equation}\label{Equation: V array}
\left(\begin{array}{c}
\syntob{\alg{x}}{\phi},\alg{a}  \\
\alg{z}:\alg{b} \mapsto \alg{c}   \\
 \syntob{\alg{y}}{\psi},\alg{d}
\end{array}\right)
=s^{-1}(\bopen{\syntob{\alg{x}}{\phi},\alg{a}})\bigcap\bopen{\alg{z}:\alg{b} \mapsto \alg{c}}\bigcap t^{-1}(\bopen{\syntob{\alg{y}}{\psi},\alg{d}})
\end{equation}
where $\phi$ and $\psi$ are coherent formulas, and where \alg{z}, as an (implicitly) typed list of variables stands for a list of sorts.
%
\begin{remark}\label{Remark: The topology for classical theories}
If \theory\ is a classical first-order theory, then Lemma \ref{Lemma: The logical topologies are all the same} should be taken to be the definition of the logical topology, with ``coherent formula'' replaced by ``first-order formula''.  Definition \ref{Definition> Logical topology} would apply, but to the Morleyization of \theory, and hence to a larger signature. This is the only point where the fact that we consider classical first-order theories to be (implicitly) Morleyized makes a noteworthy difference.
\end{remark}
\begin{remark}\label{Remark: The topology compared to Butz and Makkai}
If constructed along the lines of \cite{butz:98b}, the space $X_{\theory}$ of \theory-models would consist of models the underlying sets of which are quotients of subsets of $\mathds{S}$ with infinite equivalence classes, and with the changes to Definition \ref{Definition> Logical topology} that this would entail. For instance, the open set \bopen{x=y, a,b} would be the set of models in which $a$ and $b$ occurs in the same equivalence class (hence the open set would not be empty even if $a\neq b$, whereas in our set up it would be). As such the space of models is similar to the space of `term models' of Ch.\ 6 of \cite{makkaireyes} (where it is attributed to \cite{hakim:72}).
\end{remark}

Recall that a groupoid is a category where all arrows are invertible, and that a topological groupoid is a groupoid object in the category \alg{Sp} of topological spaces and continuous maps, i.e.\ a two spaces $G$ and $X$ of `arrows' and `objects', and with continuous  `source', `target', `identity', `inverse', and `composition'  maps
\[\bfig
%
%
\morphism<750,0>[G\times_{X} G`G;\mng{c}]
%
%
\morphism(750,0)|a|/@{>}@<5pt>/<750,0>[G`X;\mng{s}]
\morphism(750,0)|m|/@{<-}/<750,0>[G`X;\mathrm{Id}]
\morphism(750,0)|b|/@{>}@<-5pt>/<750,0>[G`X;\mng{t}]
\Loop(750,0)G(ur,ul)_{\mng{i}}
%
\efig\]
satisfying the usual axioms. Recall that a topological groupoid is said to be \emph{open} if the source and target maps are open. We verify (cf \cite{butz:98b}) that $G_{\theory}$ and $X_{\theory}$ form an open topological groupoid, denoted by $\thry{G}_{\theory}$. Note that if we are presented with a basic open set
\[\bopen{\syntob{\alg{y}:\alg{B}}{\phi},\alg{b}} \subseteq X_{\theory}\ \]
we can assume without loss of generality that, for $i\neq j$, $B_i=B_j$ implies $b_i\neq b_j$. We say that
$\bopen{\syntob{\alg{y}}{\phi},\alg{b}}$ is presented in \emph{reduced form} if this condition is
satisfied. It is clear that, as long as we are careful, we can replace elements in a model by switching to an isomorphic model. We write this out as a technical lemma for reference.
\begin{lemma}\label{Lemma: Star of David} Let  a list of sorts $\alg{A}$ of
\theory\ and two tuples $\alg{a}$ and $\alg{b}$ of $\mathds{S}$ be given, of the same length as $\alg{A}$,
and with both tuples being \emph{sortwise distinct} in the sense that whenever  $i\neq j$, $A_i=A_j$ implies $a_i\neq a_j$ and $b_i\neq b_j$. Then
for any $\alg{M}\in X_{\theory}$, if $\alg{a}\in \csem{\alg{x}:\alg{A}}{\top}^{\alg{M}}$, there exists an
$\alg{N}\in X_{\theory}$ and an isomorphism $\alg{f}:\alg{M}\rightarrow \alg{N}$ in $G_{\theory}$ such that
$f_{\alg{A}}(\alg{a})= \alg{b}$.
%
%
\end{lemma}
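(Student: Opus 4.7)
The strategy is transport of structure along a bijection: we build $\alg{N}$ as an isomorphic copy of $\alg{M}$ whose underlying sorts agree with $\sem{A}^{\alg{M}}$ except that each $a_i$ has been replaced by the prescribed $b_i$, and we take $\alg{f}$ to be the corresponding renaming. The non-repetition hypotheses on $\vec{a}$ and $\vec{b}$ are exactly what is needed to make this renaming a bijection sort by sort.

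The one genuine obstacle is that some $b_i$ may already occur in $\sem{A_i}^{\alg{M}}$ (possibly as some $a_j$ with $j\neq i$ or as an unrelated element), in which case a direct swap $a_i\leftrightarrow b_i$ would not be injective. The plan is therefore to go through a detour, which is the ``Star of David'' move suggested by the name. Concretely:
\begin{enumerate}
\item For each sort $A$ of $\Sigma$, let $I_A=\{i\mid A_i=A\}$. Using that $\Sets_{\kappa}$ has hereditarily $\kappa$-many elements and that $\sem{A}^{\alg{M}}$, $\{b_i\mid i\in I_A\}$ are $\kappa$-small, choose pairwise distinct elements $\{c_i\mid i\in I_A\}$ in $\Sets_\kappa$ which are disjoint from $\sem{A}^{\alg{M}}\cup\{b_j\mid j\in I_A\}$. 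Write $\vec{c}$ for the resulting tuple, which then satisfies the same non-repetition condition as $\vec{a}$ and $\vec{b}$.
\item Define an intermediate structure $\alg{M}'$ by setting, for each sort $A$,
\[
\sem{A}^{\alg{M}'}=\bigl(\sem{A}^{\alg{M}}\setminus\{a_i\mid i\in I_A\}\bigr)\cup\{c_i\mid i\in I_A\},
\]
and let $g_A:\sem{A}^{\alg{M}}\to \sem{A}^{\alg{M}'}$ be the bijection sending $a_i\mapsto c_i$ for $i\in I_A$ and acting as the identity elsewhere; this is well-defined and bijective precisely because the $a_i$ (resp.~$c_i$) of the same sort are distinct and the $c_i$ avoid $\sem{A}^{\alg{M}}$. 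Transport the interpretations of all function and relation symbols of $\Sigma$ along $g$ to make $\alg{M}'$ a $\Sigma$-structure; then $g:\alg{M}\to\alg{M}'$ is tautologically a $\Sigma$-isomorphism, hence (since $\alg{M}\vDash\theory$) $\alg{M}'\vDash\theory$.
\item Repeat the construction with the roles of $(\vec{a},\vec{c})$ replaced by $(\vec{c},\vec{b})$: define $\alg{N}$ by
\[
\sem{A}^{\alg{N}}=\bigl(\sem{A}^{\alg{M}'}\setminus\{c_i\mid i\in I_A\}\bigr)\cup\{b_i\mid i\in I_A\},
\]
with bijection $h_A$ sending $c_i\mapsto b_i$ and fixing everything else. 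This is bijective because the $c_i$ were chosen disjoint from $\sem{A}^{\alg{M}}\setminus\{a_i\}$ and the $b_i$ of the same sort are distinct. Transport structure along $h$ to obtain a $\Sigma$-isomorphism $h:\alg{M}'\to\alg{N}$.
\item Take $\alg{f}=h\circ g:\alg{M}\to\alg{N}$. Then $f_{A_i}(a_i)=h_{A_i}(c_i)=b_i$, as required. Finally, $\alg{N}\in X_{\theory}$ because it is isomorphic to $\alg{M}$ (hence a model of $\theory$), its underlying sets are built from elements of $\Sets_\kappa$, and each $|\sem{A}^{\alg{N}}|=|\sem{A}^{\alg{M}}|<\kappa$.
\end{enumerate}

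The bookkeeping in step 1 (choosing the fresh $\vec{c}$ simultaneously over all sorts appearing in $\vec{A}$ with the right disjointness) is the only place any care is needed; everything else is routine transport of structure. The main conceptual point is simply that the two non-repetition conditions in the hypothesis are exactly what make the two renamings $g$ and $h$ bijective on each sort.
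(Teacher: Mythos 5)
Your overall strategy (transport of structure along a renaming bijection, with freshly chosen elements to break collisions) is the right one, and indeed the paper itself offers no written proof of this lemma, treating it as routine. But as written, your step 3 has a genuine gap: the map $h_A$ need not be injective, and it fails in exactly the case you flagged at the outset as "the one genuine obstacle'', namely when some $b_i$ already occurs in $\sem{A_i}^{\alg{M}}$ as an element \emph{other than} one of the $a_j$. Concretely, take a single sort $A$ with $\sem{A}^{\alg{M}}=\{a_1,d\}$ and $b_1=d$. Your $\alg{M}'$ has $\sem{A}^{\alg{M}'}=\{c_1,d\}$, and then $h_A$ sends $c_1\mapsto b_1=d$ while fixing $d$, so $h_A$ is not injective; indeed your prescribed $\sem{A}^{\alg{N}}=(\sem{A}^{\alg{M}'}\setminus\{c_1\})\cup\{b_1\}=\{d\}$ has strictly smaller cardinality than $\sem{A}^{\alg{M}}$, so no transported structure exists at all. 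The point is that your detour through $\vec{c}$ only moves the $a_i$ out of the way; it does nothing about elements of $\sem{A_i}^{\alg{M}}\setminus\{a_j\mid j\in I_{A_i}\}$ that happen to coincide with some $b_i$, and those elements remain as fixed points of $h$ colliding with the incoming $b_i$. (Your stated justification for bijectivity of $h_A$ only rules out $c$--$b$ collisions and $b$--$b$ collisions, not these.)

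The repair is routine and preserves your plan. Either perform the renaming in one step: define $f_A$ on $\sem{A}^{\alg{M}}$ by $a_i\mapsto b_i$ for $i\in I_A$ and by sending every other element injectively to a fresh element of $\Sets_{\kappa}$ chosen outside $\{b_i\mid i\in I_A\}$ (and outside the other chosen fresh elements); this is injective by the distinctness of the $b_i$ of each sort, and you then \emph{define} $\sem{A}^{\alg{N}}$ to be its image and transport structure once. Or keep the two-step version but in the first step rename \emph{all} elements of each relevant sort (or at least all elements colliding with some $b_i$) to fresh elements avoiding both $\sem{A}^{\alg{M}}$ and the $b$'s; then the second map $c\mapsto b_i$ on the images of the $a_i$, identity elsewhere, really is a bijection. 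With that correction, the remaining points of your argument (existence of enough fresh hereditarily $\kappa$-small elements since each $\sem{A}^{\alg{M}}$ is $\kappa$-small and $\kappa$ is regular, $\alg{N}\vDash\theory$ by transport, and $\alg{N}\in X_{\theory}$) go through as you state them.
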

\begin{proposition}\label{Proposition: Multisort, groupoid is open}
The groupoid $\thry{G}_{\theory}$
\[\bfig
%
%
\morphism<750,0>[G_{\theory}\times_{X_{\theory}} G_{\theory}`G_{\theory};c]
%
%
\morphism(750,0)|a|/@{>}@<5pt>/<750,0>[G_{\theory}`X_{\theory};s]
\morphism(750,0)|m|/@{<-}/<750,0>[G_{\theory}`X_{\theory};Id]
\morphism(750,0)|b|/@{>}@<-5pt>/<750,0>[G_{\theory}`X_{\theory};t]
\Loop(750,0)G_{\theory}(ur,ul)_{\mng{i}}
%
\efig\]
is an open topological groupoid.
\begin{proof}
It is straightforward to verify that the displayed maps are continuous. We show that the source map is open, from which
it follows that the target map is open as well. Let a basic open subset
\[
V=\left(\begin{array}{c}
\syntob{\alg{x}:\alg{A}}{\phi},\alg{a}  \\
  \alg{B}:\alg{b} \mapsto \alg{c}   \\
 \syntob{\alg{y}:\alg{D}}{\psi},\alg{d}
\end{array}\right)
\]
of $G_{\theory}$ be given, and suppose $\alg{f}:\alg{M}\rightarrow \alg{N}$ is in $V$. We must find an open
neighborhood around \alg{M} which is contained in $s(V)$. We claim that
\[
U=\bopen{\syntob{\alg{x}:\alg{A},\alg{y}:\alg{D},\alg{z}:\alg{B}}{\phi\wedge\psi},\alg{a}\ast
f_{\alg{D}}^{-1}(\alg{d})\ast \alg{b}}
\]
does the trick. Clearly, $\alg{M}\in U$. Suppose $\alg{K}\in U$. Consider the tuples $
f_{\alg{D}}^{-1}(\alg{d})\ast \alg{b}$ and $\alg{d}\ast \alg{c}$ together with the list of sorts $\alg{D}\ast
\alg{B}$. Since $f_{\alg{D}\ast \alg{B}}$ sends the first tuple to the second, we can assume that the conditions
of Lemma \ref{Lemma: Star of David} are satisfied (or a simple rewriting will see that they are), and so there
exists a \theory-model \alg{L} and an isomorphism $\alg{g}:\alg{K}\rightarrow \alg{L}$ such that $g\in V$. So
$U\subseteq s(V)$.
\end{proof}
\end{proposition}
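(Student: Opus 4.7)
The plan is to reduce to showing that the source map $s\colon G_\theory\to X_\theory$ is open, since openness of the target map then follows from $t=s\circ i$ together with the fact that the inverse map $i$ is a homeomorphism (the inverse operation being its own inverse and continuous by Lemma~\ref{Lemma: G is a topological groupoid }). Given this reduction, I would take an arbitrary basic open
$$V=\left(\begin{array}{c}\syntob{\vec{x}:\vec{A}}{\phi},\vec{a}\\ \vec{B}:\vec{b}\mapsto\vec{c}\\ \syntob{\vec{y}:\vec{D}}{\psi},\vec{d}\end{array}\right)$$
of $G_\theory$ and a point $\alg{f}\colon\alg{M}\to\alg{N}$ in $V$, and exhibit an open neighborhood $U$ of $\alg{M}=s(\alg{f})$ with $U\subseteq s(V)$.

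For this, I would propose the basic open
$$U=\bopen{\syntob{\vec{x}:\vec{A},\vec{y}:\vec{D},\vec{z}:\vec{B}}{\phi\wedge\psi},\ \vec{a}\ast f_{\vec{D}}^{-1}(\vec{d})\ast\vec{b}}\subseteq X_\theory,$$
i.e., the set of models $\alg{K}$ in which $\vec{a}$ witnesses $\phi$, $f_{\vec{D}}^{-1}(\vec{d})$ witnesses $\psi$, and the elements $\vec{b}$ are present (of the appropriate sorts). Membership $\alg{M}\in U$ is immediate: the source condition of $V$ gives $\vec{a}\in\sem{\vec{x}}{\phi}^{\alg{M}}$, the preservation condition ensures $\vec{b}$ lies in $\alg{M}$, and the fact that $\alg{f}$ is an iso transports the target condition $\vec{d}\in\sem{\vec{y}}{\psi}^{\alg{N}}$ back to $f_{\vec{D}}^{-1}(\vec{d})\in\sem{\vec{y}}{\psi}^{\alg{M}}$.

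The main work is showing $U\subseteq s(V)$. Given $\alg{K}\in U$, I need to produce $\alg{L}\in X_\theory$ and an isomorphism $\alg{g}\colon\alg{K}\to\alg{L}$ lying in $V$. The natural move is to apply Lemma~\ref{Lemma: Star of David} to transport, via some isomorphism out of $\alg{K}$, the concatenated tuple $f_{\vec{D}}^{-1}(\vec{d})\ast\vec{b}$ of sorts $\vec{D}\ast\vec{B}$ to the tuple $\vec{d}\ast\vec{c}$. The one genuine subtlety, and what I expect to be the main obstacle, is verifying the distinctness hypothesis of that lemma: since $\alg{f}$ witnesses membership in $V$, the components of $f_{\vec{D}}^{-1}(\vec{d})\ast\vec{b}$ and of $\vec{d}\ast\vec{c}$ can be assumed to satisfy the required distinctness by passing, if necessary, to a reduced form of $V$ (collapsing repeated entries of the same sort). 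Once Lemma~\ref{Lemma: Star of David} supplies $\alg{g}\colon\alg{K}\to\alg{L}$ with $g_{\vec{D}\ast\vec{B}}(f_{\vec{D}}^{-1}(\vec{d})\ast\vec{b})=\vec{d}\ast\vec{c}$, the three defining conditions of $V$ for $\alg{g}$ are immediate: the source condition from $\alg{K}\in U$, the preservation condition $g_{\vec{B}}(\vec{b})=\vec{c}$ from the construction, and the target condition from $\alg{g}$ being an iso sending $f_{\vec{D}}^{-1}(\vec{d})\in\sem{\vec{y}}{\psi}^{\alg{K}}$ to $\vec{d}\in\sem{\vec{y}}{\psi}^{\alg{L}}$. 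Hence $\alg{K}=s(\alg{g})\in s(V)$, completing the argument.
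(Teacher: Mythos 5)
Your proof is correct and follows essentially the same route as the paper: the same reduction to openness of the source map, the same neighborhood $U$ built from $\vec{a}\ast f_{\vec{D}}^{-1}(\vec{d})\ast\vec{b}$, and the same appeal to Lemma \ref{Lemma: Star of David} (with the same remark that the distinctness hypothesis is arranged by passing to a reduced presentation) to transport $f_{\vec{D}}^{-1}(\vec{d})\ast\vec{b}$ to $\vec{d}\ast\vec{c}$ and land in $V$.
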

We end by noting that the spaces $G_{\theory}$ and $X_{\theory}$ are sober (recall from e.g.\ \cite{johnstone:82} that a space is sober if the completely prime filters of open sets of the space are precisely the neighborhood filters of points of the space). First, a basic open of $X_{\theory}$ ($G_{\theory}$) gives a partial, finite bit of information about the models (isomorphisms) in it. A completely prime filter of open sets gives complete information about a model (isomorphism), so we have:
\begin{Proposition}\label{Proposition: Sobriety} $X_{\theory}$ and $G_{\theory}$ are sober spaces.
\begin{proof}
We provide a sketch for the $X_{\theory}$ case. $G_{\theory}$ is similar. Let a completely prime filter, $F$, of open subsets of $X_{\theory}$ be given. For each sort $A$ of $\Sigma$, set $\sem{A}\subseteq \mathds{S}$ to be the set
\[\sem{A}:=\cterm{a\in \mathds{S}}{\bopen{A, a}\in F}\]
Interpret a relation symbol, $R$, as the set
\[\sem{R}:=\cterm{\alg{a}\in \mathds{S}}{\bopen{R,\alg{a}}\in F}\]
(note that $\bopen{R,\alg{a}}\subseteq \bopen{A, a}$, where $A$ is the appropriate sort) and a function symbol, $f$, as the function
\[\alg{a}\mapsto b\ \ \Leftrightarrow\ \ \bopen{f(\alg{a})=b}\in F\]
We show that this does indeed define a function. Suppose that $f:A\rightarrow B$ is a unary (for simplicity) function symbol of the indicated type. If $a\in \sem{A}$ then $\bopen{A, a}\in F$, and
\[\bopen{A, a}\subseteq \bopen{\syntob{x}{\fins{y}f(x)=y},a}=\bigcup_{b\in \mathds{S}}\bopen{\syntob{x,y}{f(x)=y},a,b}\]
whence there exists a $b\in\mathds{S}$ such that $\bopen{\syntob{x,y}{f(x)=y},a,b}=\bopen{f(a)=b}\in F$. Moreover, $\bopen{f(a)=b}\subseteq\bopen{B,b}$ and $\bopen{f(a)=b}\cap\bopen{f(a)=c}=\emptyset$ for $b\neq c$. We have defined, therefore, a $\Sigma$-structure the elements of which are elements of $\mathds{S}$. By a straightforward induction on $\syntob{\alg{x}}{\phi}$, we have that
\[\alg{M}\vDash \phi(\alg{a})\ \ \Leftrightarrow\ \ \bopen{\syntob{\alg{x}}{\phi}, \alg{a}}\in F\]
If \theory\ proves the sequent $\phi\vdash_{\alg{x}}\psi$ then $\bopen{\syntob{\alg{x}}{\phi},\alg{a}}\subseteq\bopen{\syntob{\alg{x}}{\psi},\alg{a}}$ for all $\alg{a}\in \mathds{S}$, and so \alg{M} is a \theory-model.
\end{proof}
\end{Proposition}
\subsection{Equivariant sheaves on topological groupoids}
\label{Subsection: Equivariant sheaves on top gpd}
Recall (e.g.\ from \cite{elephant1}) that if \thry{H} is an arbitrary topological groupoid, which we also write as $H_1\rightrightarrows H_0$,
the topos of \emph{equivariant sheaves} on \thry{H}, which we write interchangeably as
\Sh{\thry{H}} or \Eqsheav{H_1}{H_0}, consists of the following
\cite[B3.4.14(b)]{elephant1}, \cite{moerdijk:88}, \cite{moerdijk:90}.
An object of \Sh{\thry{H}} is a pair
\[\pair{a:A\rightarrow H_0, \alpha}\]
where $a$ is a local
homeo\-morphism (that is, an object of \Sh{H_0}) and $\alpha:H_1\times_{H_0} A \rightarrow A$ is a continuous
function from the pullback (in \alg{Sp}) of $a$ along the source map $s:H_1\rightarrow H_0$ to $A$ such that
\[ a(\alpha(f,x))=t(f)\]
and satisfying the axioms for an action:
\begin{enumerate}[(i)]
\item $\alpha(1_h,x)=x$ for $h\in H_0$.
\item $\alpha(g,\alpha(f,x))=\alpha(g\circ f,x)$.
\end{enumerate}
For illustration, it follows that for $f\in H_1$, $\alpha(f,-)$ is a
bijective function from the fiber over $s(f)$ to the fiber over $t(f)$.
An arrow $$h:\pair{a:A\rightarrow H_0, \alpha}\to \pair{b:B\rightarrow H_0, \beta}$$ is an arrow of
\Sh{H_0},
\[\bfig \Vtriangle<300,300>[A`B`H_0;h`a`b] \efig \]
which commutes with the actions:
\[ \bfig \square<500,400>[H_1\times_{H_0}A`A` H_1\times_{H_0}B`B; \alpha`1_{H_1}\times_{H_0}h`h`\beta] \efig \]
Working in equivariant sheaf toposes, we shall mostly be concerned with taking finite limits, unions of subobjects, and image factorizations, and we note that this is quite straightforward: the terminal object is the identity on $H_0$ with the only possible action, pullbacks are by pullbacks of spaces with the expected action, a subobject of \pair{a:A\rightarrow H_0, \alpha} is an open subset of $A$ which is closed under $\alpha$, and unions (and finite intersections) of  subobjects are by unions (finite intersections) of their underlying open sets. The image of a morphism is the image of its underlying function. Finally, we note that if \thry{H} is an \emph{open} topological groupoid, then the action of an equivariant sheaf is an open map (see e.g.\ \cite{moerdijk:88}). We write this out for reference.

%
%
%
%
%
%
%
%
%
%
%
\begin{lemma}\label{lemma: Action is open}
For any object \pair{r:R\rightarrow H_0,\rho} in \Eqsheav{H_1}{H_0},
%
%
the action \[\rho:H_1\times_{H_0} R\to R\] is open.
%
%
%
\end{lemma}
\subsection{The Representation Theorem}
\label{Subsection:The representation theorem}
We give a proof that the topos of equivariant sheaves on $\thry{G}_{\theory}$ is the classifying topos of \theory, with the functor $\cat{M}:\synt{C}{T}\to\Eqsheav{G_{\theory}}{X_{\theory}}$ which sends a formula to its corresponding definable set functor being the generic topos model of \theory. We devote some space, first, to briefly explain the setup and point out the formal similarities to the representation of a Boolean algebra in terms of its Stone space. Returning to definable set functors, notice that if we consider such a functor restricted to the set of models $X_{\theory}$
%
\begin{align*}
\csem{\alg{x}}{\phi}^{(-)} :\ & X_{\theory}\to \Sets\\
& \alg{M} \longmapsto \csem{\alg{x}}{\phi}^\alg{M}
\end{align*}
then, following the equivalence $\Sets^{X_{\theory}}\simeq\Sets/X_{\theory}$, we can write it as a set over
$X_{\theory}$, e.g.:
\[
\csem{\alg{x}}{\phi}_{X_{\theory}}:=\cterm{\pair{\alg{M},\alg{b}}}{ \alg{M}\in X_{\theory}, \alg{b}\in
\csem{\alg{x}}{\phi}^{\alg{M}}}\to^{\pi} X_{\theory}
\]
where $\pi$ projects out the model $\alg{M}$. Note the notation ``\sox{\alg{x}}{\phi}'' for the set on the left, we will use this notation in what follows. Now, the mapping $\syntob{\alg{x}}{\phi}\mapsto
(\pi:\sox{\alg{x}}{\phi}\rightarrow X_{\theory})$ gives us the object part of a functor,
\[
\cat{M}_d:\synt{C}{T}\to \Sets/X_{\theory}
\]
(which sends an arrow of \synt{C}{T} to the obvious function over $X_{\theory}$). It is easy to see, since the relevant structure is computed `pointwise' in $\Sets/X_{\theory}$, that $\cat{M}_d$ is a coherent functor. It is equally easy to see, since \theory\ is complete with respect to models in $X_{\theory}$, that $\cat{M}_d$ is conservative, i.e.\ faithful and isomorphism-reflecting. Bringing model-isomorphisms back into consideration, we can extend $\cat{M}_d$ to a functor
\[\cat{M}'_d:\synt{C}{T}\to \Sets_{G_{\theory}}(X_{\theory})\simeq \Sets^{\thry{G}_{\theory}}\]
into the topos of equivariant sets (defined as in Section \ref{Subsection: Equivariant sheaves on top gpd} for the discrete topologies) on $\thry{G}_{\theory}$, by sending an object $\syntob{\alg{x}}{\phi}\in \synt{C}{T}$ to the pair \pair{\sox{\alg{x}}{\phi}, \theta_{\syntob{\alg{x}}{\phi}}}, where $\theta_{\syntob{\alg{x}}{\phi}}$ is the expected `application' action defined by
\[\theta_{\syntob{\alg{x}}{\phi}}(\pair{\alg{M},\alg{a}},\alg{f}:\alg{M}\rightarrow \alg{N})=\pair{\alg{N},\alg{f}(\alg{a})}\]
(when clear from context we shall drop the subscript on $\theta$). Again, this defines a coherent and conservative functor.
%

The functor $\cat{M}_d$ (and hence also the functor $\cat{M}'_d$) is, in fact, not only conservative, but also \emph{cover-reflecting} with respect to the canonical coverage on the topos $\Sets/X_{\theory}$ and the coherent coverage on \synt{C}{T}. That is to say, for a family of arrows into a common codomain in \synt{C}{T}, if the image of those arrows  in $\Sets/X_{\theory}$ are jointly epimorphic  then there exist a finite selection of arrows that are already jointly epimorphic in \synt{C}{T}. The reason is that we have made sure that $X_{\theory}$ contains `enough models' for \theory\, in the sense that for any geometric sequent over $\Sigma$, if $\alg{M}\vDash\sigma$ for all $\alg{M}\in X_{\theory}$, then $\sigma$ is provable from \theory. (If \theory\ is a first-order theory, this property corresponds to the property that every \theory-type is realizable in some model in $X_{\theory}$.)
%

The embedding of \synt{C}{T} into sets over its set of models (or into equivariant sets over its groupoid of models) could be called `Stone representation for coherent theories'. From the representation result of Butz and Moerdijk \cite{butz:98b}, it is clear that, similar to the case for Boolean algebras and Stone spaces, one can equip the groupoid of models with topological structure such that the image of the embedding can be characterized in the resulting topos of equivariant sheaves. We give a elementary and self-contained proof of this where, although we include the perhaps tedious details, the conceptual idea is quite straightforward (and with some analogy to the Boolean algebra case): We first factor the embedding through the topos of equivariant sheaves on $\thry{G}_{\theory}$ by equipping the definable set functors with a suitable topology, and show that the resulting embedding is full. Then, we show that the objects in the image of the embedding form a `basis' or \emph{generating set} for the topos, whence it is the classifying topos for \theory. It follows that \synt{C}{T} can be recovered from \Eqsheav{G_{\theory}}{X_{\theory}} up to pretopos completion as the compact decidable objects, where a compact object is an object with the property that any covering family of subobjects must contain a covering finite subfamily.
\begin{definition}\label{Definition: Logical topology on sheaves}
For an object $\syntob{\alg{x}}{\phi}$ of \synt{C}{T}, the \emph{logical topology} on the set
\[ \sox{\alg{x}}{\phi}=\cterm{\pair{\alg{M},\alg{a}}}{\alg{M}\in X_{\theory},
\alg{a}\in \csem{\alg{x}}{\phi}^{\alg{M}}} \] is the coarsest such that the map $\pi: \csem{\alg{x}}{\phi}_{X_{\theory}}\rightarrow X_{\theory}$ is continuous and such that for every list of elements $\alg{a}\in\mathds{S}$ of the same length as \alg{x}, the image of the map $\bopen{\syntob{\alg{x}}{\phi},\alg{a}}\rightarrow \sox{\alg{x}}{\phi}$ defined by $\alg{M}\mapsto \pair{\alg{M},\alg{a}}$ is open.
\end{definition}
\begin{lemma}\label{lemma: Logical topology on sheaves}
For an object $\syntob{\alg{x}}{\phi}$ of \synt{C}{T}, a basis for the \emph{logical topology} on the set $\sox{\alg{x}}{\phi}$ is given by sets of the form
\[
\bopen{\syntob{\alg{x},\alg{y}}{\psi}, \alg{b}}:= \cterm{\pair{\alg{M},\alg{a}}}{ \alg{a}\ast\alg{b}\in
\csem{\alg{x},\alg{y}}{\phi\wedge\psi}^{\alg{M}}}
\]
(where $\alg{b}$ is of the same length as $\alg{y}$)
\begin{proof}Note that the (open) image of the map $\bopen{\syntob{\alg{x}}{\phi},\alg{a}}\rightarrow \sox{\alg{x}}{\phi}$ defined by $\alg{M}\mapsto \pair{\alg{M},\alg{a}}$ can be written as \bopen{\syntob{\alg{x},\alg{y}}{ \alg{x}=\alg{y}}, \alg{a}}. In general,  \bopen{\syntob{\alg{x},\alg{y}}{\psi}, \alg{b}} is an open set: for if $\pair{\alg{M},\alg{a}}\in\bopen{\syntob{\alg{x},\alg{y}}{\psi}, \alg{b}}$, then $\pair{\alg{M},\alg{a}}\in \bopen{\syntob{\alg{x},\alg{y}}{ \alg{x}=\alg{y}}, \alg{a}}\cap \pi^{-1}(\bopen{\syntob{\alg{x},\alg{y}}{\psi}, \alg{a}\ast\alg{b}})\subseteq \bopen{\syntob{\alg{x},\alg{y}}{\psi}, \alg{b}}$. It is clear that such sets form a basis.
\end{proof}
\end{lemma}
We now have the following:
\begin{proposition}\label{Proposition: DC, Md factors as M}
The mapping $\syntob{\alg{x}}{\phi}\mapsto \pair{\sox{\alg{x}}{\phi}, \theta}$ defines the object part of a coherent functor
\[
\cat{M}:\synt{C}{T}\to\Eqsheav{G_{\theory}}{X_{\theory}}
\]
which is cover-reflecting with respect to the coherent coverage on \synt{C}{T} and the canonical coverage on \Eqsheav{G_{\theory}}{X_{\theory}} (in particular, \cat{M} is conservative).
\begin{proof}
It is clear from Definition \ref{Definition: Logical topology on sheaves} and Lemma \ref{lemma: Logical topology on sheaves} that the projection $\pi:\sox{\alg{x}}{\phi}\rightarrow X_{\theory}$ is a local homeomorphism.
Also, given an arrow
\[\syntob{\alg{x},\alg{y}}{\sigma}:\syntob{\alg{x}}{\phi}\to
\syntob{\alg{y}}{\psi}\]
in \synt{C}{T}, the function $f_{\sigma}=\cat{M}(\sigma): \sox{\alg{x}}{\phi}\rightarrow \sox{\alg{y}}{\psi}$ is
continuous. For given a basic open
$\bopen{\syntob{\alg{y},\alg{z}}{\xi}, \alg{c}} \subseteq \sox{\alg{y}}{\psi}$, then
\[
f_{\sigma}^{-1}\left(\bopen{\syntob{\alg{y},\alg{z}}{\xi}, \alg{c}} \right) =
\bopen{\syntob{\alg{x},\alg{z}}{\fins{\alg{y}} \sigma \wedge \xi}, \alg{c}}
\]
Next, the action $\theta_{\syntob{\alg{x}}{\phi}}$ is continuous: Let a basic open
\[U= \bopen{\syntob{\alg{x},\alg{y}}{\psi}, \alg{b}}\subseteq \sox{\alg{x}}{\phi}\]
be given, and suppose $\theta(\alg{f},\pair{\alg{M},\alg{a}})= \pair{\alg{N},\alg{f}(\alg{a})}\in U$ for
$\alg{M},\alg{N}\in X_{\theory}$ and $f:\alg{M}\rightarrow \alg{N}$ in $G_{\theory}$. Then we can specify an
open neighborhood around \pair{\alg{f},\pair{\alg{M},\alg{a}}} which $\theta$ maps into $U$ as:
\[
\pair{\alg{f},\pair{\alg{M},\alg{a}}}\in \left(\begin{array}{c}
-  \\
  \alg{y}:\alg{f}^{-1}(\alg{b}) \mapsto \alg{b}   \\
 -
\end{array}\right)
 \times_{X_{\theory}}
\bopen{\syntob{\alg{x},\alg{y}}{\psi}, \alg{f}^{-1}(\alg{b})}
\]
Finally, it is a straightforward computation (either directly in \Eqsheav{G_{\theory}}{X_{\theory}} or in $\Sets/X_{\theory}$ using that the forgetful functor is a conservative geometric functor) to show that \cat{M} is coherent and cover-reflecting.
\end{proof}
\end{proposition}
We refer to the objects (and morphisms) in the image of \cat{M} as the \emph{definable} objects.
Next, we show that \cat{M} is full on compact subobjects, and conclude that \cat{M} is full. For a subobject (represented by an inclusion) $\syntob{\alg{x}}{\xi}\embedd \syntob{\alg{x}}{\phi}$ in
\synt{C}{T}, the open subset $\sox{\alg{x}}{\xi}\subseteq \sox{\alg{x}}{\phi}$ is closed under the
action $\theta$ in the usual sense that $\theta(a)\in \sox{\alg{x}}{\xi}$ for any point $a\in \sox{\alg{x}}{\xi}$. For an object, \pair{A\rightarrow X_{\theory}, \alpha}, of \Eqsheav{G_{\theory}}{X_{\theory}},
we call a subset, $S\subseteq A$, that is closed under the action of $G_{\theory}$ \emph{stable}, so as to
reserve ``closed'' to mean topologically closed. We claim that the only stable opens of $\sox{\alg{x}}{\phi}$ come
from subobjects of $\syntob{\alg{x}}{\phi}$ as joins.  Specifically:
\begin{lemma}\label{Lemma: Stabilization of basic open U}
Let $\syntob{\alg{x}:\alg{A}}{\phi}$ in \synt{C}{T} and $U$ a  basic open subset of
$\sox{\alg{x}:\alg{A}}{\phi}$ of the form
\[
U=\bopen{\syntob{\alg{x}:\alg{A},\alg{y}:\alg{B}}{\psi}, \alg{b}}
\]
be given. Then the stabilization (closure) of $U$ under the action $\theta$ of $G_{\theory}$ on
$\bopen{\syntob{\alg{x}:\alg{A}}{\phi}}$ is a subset of the form $\sox{\alg{x}:\alg{A}}{\xi}\subseteq
\sox{\alg{x}:\alg{A}}{\phi}$.

\begin{proof} We can assume without loss that $U$ is in reduced form. Let $\varphi$ be the formula expressing the
conjunction of inequalities $y_i\neq y_j$ for all pairs of indices $i\neq j$ such that $B_i=B_j$ in $\alg{B}$.
We claim that the stabilization of $U$ is $\sox{\alg{x}:\alg{A}}{\xi}$ where $\xi$ is the formula
$\finst{\alg{y}}{\alg{B}}\phi \wedge \psi\wedge \varphi$. First, $\sox{\alg{x}:\alg{A}}{\xi}$ is a stable set
containing $U$. Next, suppose $\pair{\alg{M},\alg{a}}\in \sox{\alg{x}:\alg{A}}{\xi}$. Then there exists
$\alg{c}$ such that $\alg{a}\ast \alg{c}\in \csem{\alg{x}:\alg{A},\alg{y}:\alg{B}}{\phi \wedge \psi\wedge
\varphi}^{\alg{M}}$. Then $\alg{b}$ and $\alg{c}$ (with respect to $\alg{B}$) satisfy the conditions of Lemma
\ref{Lemma: Star of David}, so there exists a \theory-model $\alg{N}$ with isomorphism
$\alg{f}:\alg{M}\rightarrow \alg{N}$ such that $f_{\alg{B}}(\alg{c})=\alg{b}$. Then $\theta
(\alg{f},\pair{\alg{M},\alg{a}})\in U$, and hence \pair{\alg{M},\alg{a}} is in the stabilization of $U$.
\end{proof}
\end{lemma}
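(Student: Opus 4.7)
The plan is to find a formula $\xi$ over the context $\vec{x}:\vec{A}$ whose associated sheaf $\sox{\vec{x}:\vec{A}}{\xi}$ is exactly the stabilization of $U$. Since the action $\theta$ simply transports the component $\vec{a}$ along a model-isomorphism and existing formulas in context cut out stable subsets, the natural candidate is to existentially quantify out the ``witness'' variables $\vec{y}$ of the basic-open presentation of $U$, giving something like $\finst{\vec{y}}{\vec{B}}(\phi \wedge \psi)$. The subtlety is that to invoke Lemma \ref{Lemma: Star of David} we need a distinctness condition on the witnesses, so I would strengthen this to $\xi := \finst{\vec{y}}{\vec{B}}(\phi \wedge \psi \wedge \varphi)$, where $\varphi$ is the conjunction of inequalities $y_i \neq y_j$ for every pair $i\neq j$ with $B_i = B_j$. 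The inequality predicate is available precisely because \theory\ is decidable coherent.

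First, I would invoke the preceding remark to replace $U$ by an equivalent presentation in reduced form, so that $\vec{b}$ already satisfies the distinctness pattern encoded by $\varphi$. Then I would carry out three verifications. (i) The set $\sox{\vec{x}:\vec{A}}{\xi}$ is stable under $\theta$: this is immediate from the fact that every $\alg{f}\in G_{\theory}$ is a \theory-model isomorphism, hence preserves the truth of coherent (indeed first-order) formulas, so membership of $\pair{\alg{M},\vec{a}}$ in a set defined by a formula in context is preserved by $\theta$. (ii) $U \subseteq \sox{\vec{x}:\vec{A}}{\xi}$: for $\pair{\alg{M},\vec{a}}\in U$, the tuple $\vec{b}$ itself witnesses the existential in $\xi$ at $\alg{M}$, with $\varphi$ holding by the reduced-form assumption. (iii) Conversely, if $\pair{\alg{M},\vec{a}}\in \sox{\vec{x}:\vec{A}}{\xi}$, pick witnesses $\vec{c}$ in $\alg{M}$ with $\vec{a}\ast\vec{c}\in \csem{\vec{x}:\vec{A},\vec{y}:\vec{B}}{\phi\wedge\psi\wedge\varphi}^{\alg{M}}$. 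Since both $\vec{c}$ (by $\varphi$) and $\vec{b}$ (by reduced form) satisfy the distinctness hypothesis, Lemma \ref{Lemma: Star of David} supplies an isomorphism $\alg{f}:\alg{M}\to \alg{N}$ in $G_{\theory}$ with $f_{\vec{B}}(\vec{c})=\vec{b}$. Then $\theta(\alg{f},\pair{\alg{M},\vec{a}})=\pair{\alg{N},f_{\vec{A}}(\vec{a})}$, and since $\alg{f}$ preserves satisfaction of $\phi\wedge\psi$, this point lies in $U$. Thus $\pair{\alg{M},\vec{a}}$ belongs to the stabilization of $U$.

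The main obstacle is pinpointing the need for the inequality conjunct $\varphi$: the naive choice $\finst{\vec{y}}{\vec{B}}(\phi\wedge \psi)$ does not suffice, because the witnesses $\vec{c}$ produced by the existential quantifier need not be distinct in the way that the Star of David Lemma requires in order to be shuffled onto $\vec{b}$ by an isomorphism. Recognizing that one must use the inequality predicate of the decidable coherent theory to enforce the reduced-form pattern on $\vec{c}$, and then reducing $U$ to the same pattern on $\vec{b}$, is the crux; the remaining work is bookkeeping of contexts, tuples, and coherent formula manipulation.
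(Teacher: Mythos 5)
Your proposal is correct and follows essentially the same route as the paper's proof: the same choice $\xi = \finst{\vec{y}}{\vec{B}}\phi \wedge \psi\wedge \varphi$ with the inequality conjunct $\varphi$, the same reduction of $U$ to reduced form, and the same application of Lemma \ref{Lemma: Star of David} to transport the witnesses $\vec{c}$ onto $\vec{b}$ by an isomorphism. Your identification of $\varphi$ as the crux, needed to meet the distinctness hypothesis of that lemma, is exactly the point the paper's argument turns on.
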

%
%
%
%
%
%
%
\begin{corollary}\label{Corollary: Definables the only stable opens}
\cat{M} is full and full on compact subobjects.
\begin{proof}
By Lemma \ref{Lemma: Stabilization of basic open U} a subobject of a definable object is a join of definable subobjects, so a compact subobject is a finite join of definable subobjects, and therefore definable. Moreover, since \cat{M} is cover-reflecting, definable objects are compact. Thus the graph of an arrow between definable objects is a compact subobject of a definable object, and so definable, and since \cat{M} is coherent and conservative, it is the image of a graph in \synt{C}{T}.
\end{proof}
\end{corollary}
It remains to show that the definable objects form a generating set for \Eqsheav{G_{\theory}}{X_{\theory}}.
\begin{lemma}\label{Lemma: Basic opens of X generate representables}
Let $\bopen{\syntob{\alg{x}:\alg{A}}{\phi},\alg{a}}$ be a basic open of $X_{\theory}$ in reduced form. Then
there exists a sheaf $\cat{M}({\syntob{\alg{x}:\alg{A}}{\xi}})$ and a (continuous) section
\[s:\bopen{\syntob{\alg{x}:\alg{A}}{\phi},\alg{a}} \to \sox{\alg{x}:\alg{A}}{\xi}\]
such that $\sox{\alg{x}:\alg{A}}{\xi}$ is the stabilization of the open set
$s(\bopen{\syntob{\alg{x}:\alg{A}}{\phi}})\subseteq \sox{\alg{x}:\alg{A}}{\xi}$.
\begin{proof}
Let $\varphi$ be the formula expressing the inequalities $x_i\neq x_j$ for all pairs of indices $i\neq j$ such
that $A_i=A_j$ in $\alg{A}$. Let $\xi:=\phi \wedge \varphi$ and consider the function
$s:\bopen{\syntob{\alg{x}:\alg{A}}{\phi},\alg{a}} \to \sox{\alg{x}:\alg{A}}{\xi}$ defined by $\alg{M}\mapsto
\pair{\alg{M},\alg{a}}$. The image of $s$ is open, so $s$ is a (continuous) section.
And by the proof of Lemma \ref{Lemma: Stabilization of basic open U}, the stabilization of
$\bopen{\syntob{\alg{x}:\alg{A},\alg{y}:\alg{A}}{\alg{x}=\alg{y}}, \alg{a}}$ is exactly
$\sox{\alg{x}:\alg{A}}{\xi}$.
\end{proof}
\end{lemma}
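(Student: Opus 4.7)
The plan is to take the evident candidate section $s:\alg{M}\mapsto\pair{\alg{M},\vec{a}}$ sending a model in the basic open to the point in a definable-set sheaf determined by the fixed tuple $\vec{a}$, and to enlarge $\phi$ by exactly the inequalities that the reduced-form hypothesis already forces $\vec{a}$ to satisfy. This will make the image of $s$ an open subset of the target sheaf whose stabilization coincides with the whole sheaf by the preceding Lemma~\ref{Lemma: Stabilization of basic open U}.

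Concretely, I would let $\varphi$ be the conjunction of inequalities $x_i\neq x_j$ over those pairs $i\neq j$ with $A_i=A_j$ in $\vec{A}$, and set $\xi := \phi\wedge\varphi$. The reduced-form hypothesis on $\bopen{\syntob{\vec{x}:\vec{A}}{\phi},\vec{a}}$ says precisely that $\vec{a}$ satisfies $\varphi$ in every $\alg{M}$ lying in that basic open, so the putative section lands in $\sox{\vec{x}:\vec{A}}{\xi}$. Its image is exactly the basic open $\bopen{\syntob{\vec{x}:\vec{A},\vec{y}:\vec{A}}{\vec{x}=\vec{y}},\vec{a}}\subseteq \sox{\vec{x}:\vec{A}}{\xi}$. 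Since $s$ is a right inverse to the étale projection $\pi_1$ of Lemma~\ref{Lemma: p is LH}, and lands inside an open set on which $\pi_1$ is bijective onto $\bopen{\syntob{\vec{x}:\vec{A}}{\phi},\vec{a}}$, the restricted $\pi_1$ is a homeomorphism whose inverse is $s$; this gives continuity and sectionhood of $s$ for free.

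For the stabilization clause, I would apply Lemma~\ref{Lemma: Stabilization of basic open U} to the image basic open, viewed inside $\sox{\vec{x}:\vec{A}}{\xi}$: in the notation there one takes the auxiliary formula $\psi$ to be $\vec{x}=\vec{y}$, and the inequality companion the stabilization argument introduces is precisely the $\varphi$ already baked into $\xi$. The resulting stabilization formula therefore simplifies to $\xi$ itself, so the stabilization is all of $\sox{\vec{x}:\vec{A}}{\xi}$, as required.

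The main obstacle, such as it is, is bookkeeping the variables so that the inequality data supplied by the reduced-form hypothesis is exactly what Lemma~\ref{Lemma: Stabilization of basic open U} demands in the opposite direction: without reduced form, $\vec{a}$ could have duplicate entries in a common sort, in which case $s$ would land in a sheaf where stabilization overshoots and the identity would fail. Continuity of $s$ is a formality once the image is identified with an open set, and the remainder is a direct appeal to the preceding stabilization computation.
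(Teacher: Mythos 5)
Your proposal is correct and follows essentially the same route as the paper: the same choice of $\varphi$ and $\xi=\phi\wedge\varphi$, the same section $\alg{M}\mapsto\pair{\alg{M},\vec{a}}$ with image $\bopen{\syntob{\vec{x}:\vec{A},\vec{y}:\vec{A}}{\vec{x}=\vec{y}},\vec{a}}$, and the same appeal to (the proof of) Lemma \ref{Lemma: Stabilization of basic open U} to see that the stabilization formula collapses to $\xi$. Your added remarks on why reduced form puts the section inside $\sox{\vec{x}:\vec{A}}{\xi}$ and on continuity via the local homeomorphism are just slight elaborations of what the paper leaves implicit.
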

\begin{lemma}\label{Lemma: definables are generators}
The definable objects generate the topos \Eqsheav{G_{\theory}}{X_{\theory}}.
\begin{proof}
Let a sheaf $\pair{\bfig\morphism<300,0>[R`X_{\theory};r]\efig,\rho}$ and an element $x\in R$ be given. We  show that there exists a morphism with definable domain with $x$ in its image. First, we show that  there exists a basic open $\bopen{\syntob{\alg{x}:\alg{A}}{\phi},\alg{a}}\subseteq
X_{\theory}$ and a section $v: \bopen{\syntob{\alg{x}:\alg{A}}{\phi},\alg{a}}\rightarrow R$ containing $x$ such
that for any $\alg{f}:\alg{M}\rightarrow \alg{N}$ in $G_{\theory}$ such that $\alg{M}\in
\bopen{\syntob{\alg{x}:\alg{A}}{\phi},\alg{a}}$ and $f_{\alg{A}}(\alg{a})=\alg{a}$ (so that $\alg{N}$ is also in
$\bopen{\syntob{\alg{x}:\alg{A}}{\phi},\alg{a}}$), we have $\rho(\alg{f},v(\alg{M}))= v(\alg{N})$:
Given $x\in R$,
choose a section $s:\bopen{\syntob{\alg{y}:\alg{B}}{\psi},\alg{b}}\rightarrow R$ such that $x\in
s(\bopen{\syntob{\alg{y}:\alg{B}}{\psi},\alg{b}})$. Pull the open set
$s(\bopen{\syntob{\alg{y}:\alg{B}}{\psi},\alg{b}})$ back along the continuous action $\rho$,
\[
\bfig \square|allb|/->` >->`
>->`->/<600,300>[V`s(\bopen{\syntob{\alg{y}:\alg{B}}{\psi},\alg{b}})`G_{\theory}\times_{X_{\theory}} R`R;`\subseteq`
\subseteq`\rho ] \place(75,250)[\spbangle]  \efig
\]
to obtain an open set $V$ containing \pair{1_{r(x)},x}. Since $V$ is open, we can find a box of basic opens
around \pair{1_{r(x)},x} contained in $V$:
\[
\pair{1_{r(x)},x}\in W:= \left(\begin{array}{c}
\syntob{\alg{z}:\alg{C}}{\xi},\alg{c}  \\
  \alg{K}:\alg{k} \mapsto \alg{k}   \\
 \syntob{\alg{z'}:\alg{C'}}{\eta},\alg{c'}
\end{array}\right)
\times_{X_{\theory}} v'(U_{\syntob{\alg{y'}:\alg{D}}{\theta},\alg{d}})\subseteq V
\]
where $v'$ is a section $v':\bopen{\syntob{\alg{y'}:\alg{D}}{\theta},\alg{d}}\rightarrow R$ with $x$ in its
image. Notice that the preservation condition of $W$  (i.e.\ $\alg{K}:\alg{k} \mapsto \alg{k}  $) must have the
same sets on both the source and the target side, since it is satisfied by $1_{r(x)}$. Now, restrict $v'$ to the
subset
\[
U:=\bopen{\syntob{\alg{z}:\alg{C}, \alg{z''}:\alg{K}, \alg{z'}:\alg{C'}, \alg{y'}:\alg{D}}{\xi \wedge \eta
\wedge \theta},\alg{c}\ast \alg{k}\ast \alg{c'}\ast \alg{d}}
\]
to obtain a section $v=v'\upharpoonright_{U}:U\rightarrow R$. Notice that $x\in v(U)$. Furthermore,
$v(U)\subseteq s(\bopen{\syntob{\alg{y}:\alg{B}}{\psi},\alg{b}})$, for if $v(\alg{M})\in v(U)$, then
$\pair{1_{\alg{M}},v(\alg{M})}\in W$, and so $\rho(\pair{1_{\alg{M}},v(\alg{M})})=v(\alg{M})\in
s(\bopen{\syntob{\alg{y}:\alg{B}}{\psi},\alg{b}})$. Finally, if $\alg{M}\in U$ and $\alg{f}:\alg{M}\rightarrow
\alg{N}$ is an isomorphism in $G_{\theory}$ such that
\[ f_{\alg{C}\ast \alg{K}\ast \alg{C'}\ast
\alg{D}}(\alg{c}\ast \alg{k}\ast \alg{c'}\ast \alg{d}) =\alg{c}\ast \alg{k}\ast \alg{c'}\ast \alg{d}\]
then $\pair{\alg{f},v(\alg{M})} \in W$, and so $\rho(\alg{f},v(\alg{M}))\in
s(\bopen{\syntob{\alg{y}:\alg{B}}{\psi},\alg{b}})$. But we also have $v(\alg{N})\in v(U) \subseteq
s(\bopen{\syntob{\alg{y}:\alg{B}}{\psi},\alg{b}})$, and $r(\rho(\alg{f},v(\alg{M})))=r(v(\alg{N})$,  so
$\rho(\alg{f},v(\alg{M}))=v(\alg{N})$. This concludes the first part. Write \bopen{\syntob{\alg{x}:\alg{A}}{\phi},\alg{a}} for $U$ to shorten the notation.
Next, we lift the section $v:\bopen{\syntob{\alg{x}:\alg{A}}{\phi},\alg{a}}\rightarrow R$ to a morphism with definable domain.
We can
assume that $\bopen{\syntob{\alg{x}:\alg{A}}{\phi},\alg{a}}$ is on reduced form. Then, by Lemma \ref{Lemma:
Basic opens of X generate representables} there exists an object $\syntob{\alg{x}:\alg{A}}{\xi}$ in \synt{C}{T}
and a section $s:\bopen{\syntob{\alg{x}:\alg{A}}{\phi},\alg{a}}\rightarrow \sox{\alg{x}:\alg{A}}{\xi}$ such that
$\sox{\alg{x}:\alg{A}}{\xi}$ is the stabilization of $s(\bopen{\syntob{\alg{x}:\alg{A}}{\phi},\alg{a}})$. Define
a mapping $\hat{v}:\sox{\alg{x}:\alg{A}}{\xi}\rightarrow R$ as follows: for an element
$\pair{{\alg{N}},\alg{c}}\in \sox{\alg{x}:\alg{A}}{\xi}$, there exists $\pair{{\alg{M}}, \alg{a}}\in
s(\bopen{\syntob{\alg{x}:\alg{A}}{\phi},\alg{a}})\subseteq \sox{\alg{x}:\alg{A}}{\xi}$ and
$\alg{f}:{\alg{M}}\rightarrow {\alg{N}}$ in $G_{\theory}$ such that $f_{\alg{A}}(\alg{a})=\alg{c}$. Set
$\hat{v}(\pair{{\alg{N}},\alg{c}})=\rho(\alg{f}, v({\alg{M}}))$. We verify that $\hat{v}$ is well defined:
suppose $\pair{{\alg{M}}',\alg{a}}\in s(\bopen{\syntob{\alg{x}:\alg{A}}{\phi},\alg{a}}) \subseteq
\sox{\alg{x}:\alg{A}}{\xi}$ and $\alg{g}:{\alg{M}}'\rightarrow {\alg{N}}$ in $G_{\theory}$ is such that
$g_{\alg{A}}(\alg{a})=\alg{c}$. Then $\alg{g}^{-1}\circ \alg{f}:{\alg{M}}\rightarrow {\alg{M}}'$ sends
$\alg{a}\in \csem{\alg{x}:\alg{A}}{\phi}^{\alg{M}}$ to $\alg{a}\in \csem{\alg{x}:\alg{A}}{\phi}^{{\alg{M}}'}$,
and so by the choice of section $v:\bopen{\syntob{\alg{x}:\alg{A}}{\phi},\alg{a}}\rightarrow R$, we have that
$\rho(\alg{g}^{-1}\circ \alg{f},v({\alg{M}}))=v({\alg{M}}')$. But then
\[
\rho(\alg{g},v({\alg{M}}'))= \rho(\alg{g},\rho(\alg{g}^{-1}\circ \alg{f},v({\alg{M}})))
=\rho(\alg{f},v({\alg{M}}))\] so the value of $\hat{v}$ at \pair{{\alg{N}},\alg{c}} is indeed independent of the
choice of \pair{{\alg{M}},\alg{a}} and $\alg{f}$. Moreover, the following triangle commutes,
\begin{equation}\label{eq: commuting triangle in generators proof}
\bfig \Vtriangle/>`<-< `<-<
/<300,300>[\sox{\alg{x}:\alg{A}}{\xi}`R`\bopen{\syntob{\alg{x}:\alg{A}}{\phi},\alg{a}};\hat{v}`s`v] \efig
\end{equation}
and so $x$ is in the image of $\hat{v}$. The definition of $\hat{v}$ makes it straightforward to see that $\hat{v}$ commutes with the actions $\theta$
and $\rho$ of $\sox{\alg{x}:\alg{A}}{\xi}$ and $R$, respectively. Remains to show that $\hat{v}$ is continuous.
%
%
Consider the triangle (\ref{eq: commuting triangle in generators proof}). Let $y\in
\hat{v}(\sox{\alg{x}:\alg{A}}{\xi})$ be given, and suppose $U$ is a open neighborhood of $y$. By Lemma
\ref{lemma: Action is open}, we can assume that $U\subseteq \hat{v}(\sox{\alg{x}:\alg{A}}{\xi})$. Suppose
$y=\hat{v}(\pair{{\alg{N}},\alg{c}})=\rho(\alg{f},v({\alg{M}}))$ for a $\alg{f}:{\alg{M}}\rightarrow {\alg{N}}$
such that $\theta(\alg{f},s({\alg{M}}))=\pair{{\alg{N}},\alg{c}}$. We must find an open neighborhood $W$ around
$\pair{{\alg{N}},\alg{c}}$ such that $\hat{v}(W)\subseteq U$. First, define the open neighborhood $T\subseteq
G_{\theory}\times_{X_{\theory}}R$ around \pair{\alg{f},v({\alg{M}})} by
\[
T:=\rho^{-1}(U)\cap \left( G_{\theory}\times_{X_{\theory}}v(\bopen{\syntob{\alg{x}:\alg{A}}{\phi},\alg{a}})
\right)
\]
From the homeomorphism $v(\bopen{\syntob{\alg{x}:\alg{A}}{\phi},\alg{a}}) \cong
s(\bopen{\syntob{\alg{x}:\alg{A}}{\phi},\alg{a}})$ we obtain a homeomorphism
$G_{\theory}\times_{X_{\theory}}v(\bopen{\syntob{\alg{x}:\alg{A}}{\phi},\alg{a}})\cong
G_{\theory}\times_{X_{\theory}} s(\bopen{\syntob{\alg{x}:\alg{A}}{\phi},\alg{a}})$. Set $T'\subseteq
G_{\theory}\times_{X_{\theory}}s(\bopen{\syntob{\alg{x}:\alg{A}}{\phi},\alg{a}})$ to be the open subset
corresponding to $T$ under this homeomorphism,
\begin{align*}
\pair{\alg{f}, v({\alg{M}})} &\in  T \subseteq G_{\theory}\times_{X_{\theory}}v(\bopen{\syntob{\alg{x}:\alg{A}}{\phi},\alg{a}})\\
                  &  \ \ \ \ \ \ \cong\\
\pair{\alg{f},s({\alg{M}})} &\in T'   \subseteq
G_{\theory}\times_{X_{\theory}}s(\bopen{\syntob{\alg{x}:\alg{A}}{\phi},\alg{a}})
\end{align*}
Then $\pair{{\alg{N}},\alg{c}}=\theta(\alg{f}, s({\alg{M}}))\in \theta(T')$, and by Corollary \ref{lemma:
Action is open}, $\theta(T')$ is open. We claim that $\hat{v}(\theta(T'))\subseteq U$: for suppose
$\pair{\alg{g},s(\alg{P})}\in T'$. Then $\pair{\alg{g},v(\alg{P})}\in T\subseteq \rho^{-1}(U)$, and so
$\hat{v}(\theta(\alg{g},s(\alg{P})))=\rho(\pair{\alg{g},v(\alg{P})})\in U$. Thus $\theta(T')$ is the required
$W$.
\end{proof}
\end{lemma}
%
%
%
%
We conclude:
\begin{theorem}\label{Theorem: Representation theorem}
For a decidable coherent theory \theory\ we have an equivalence of toposes,
\[
\Eqsheav{G_{\theory}}{X_{\theory}}\simeq \classtop .
\]
where $\thry{G}_{\theory}$ is the topological groupoid of \theory-models constructed over the set $\mathds{S}$.
\begin{proof}Since, by Lemma \ref{Lemma: definables are generators}
the definable objects form a generating set, the full subcategory of definable objects is a site for
\Eqsheav{G_{\theory}}{X_{\theory}}\ when equipped with the canonical coverage inherited from
\Eqsheav{G_{\theory}}{X_{\theory}} (see e.g.\ \cite[C2.2.16]{elephant1}). Since, by Proposition \ref{Proposition: DC, Md factors as M} and Corollary \ref{Corollary: Definables the only stable opens} the functor
$\cat{M}:\synt{C}{T}\to\Eqsheav{G_{\theory}}{X_{\theory}}$ is full and faithful and cover-reflecting with respect to the coherent coverage, this means that  \Eqsheav{G_{\theory}}{X_{\theory}} is equivalent to the topos \Sh{\synt{C}{T},J} of sheaves for the coherent coverage on \synt{C}{T}, and the latter is the classifying topos of \theory\ by \cite[D3.1.9]{elephant1}.
\end{proof}
\end{theorem}
A consequence of Theorem \ref{Theorem: Representation theorem} is that a theory can be recovered from its topological groupoid of models. We explore this further, and with further explanation, in Section \ref{Section: Duality}, but state it here. We say that two theories are the same up to pretopos completion if their syntactic categories have equivalent pretopos completions.
\begin{corollary}\label{Corollary: Recover theory}
A coherent decidable theory \theory\ can be recovered, up to pretopos completion,  from its topological groupoid $\thry{G}_{\theory}$ of models as the full subcategory of compact decidable objects in the topos \Eqsheav{G_{\theory}}{X_{\theory}}.
\begin{proof}By \cite[D3.3]{elephant1}
\end{proof}
\end{corollary}
%
%
%
%
%
%
%
\section{Duality}
\label{Section: Duality}
%
%
Based on our version of Butz and Moerdijk's groupoid representation of toposes with enough points in the form of Theorem \ref{Theorem: Representation theorem}, we construct a `duality' between decidable coherent categories, representing decidable coherent theories, and a category of topological groupoids. This  takes the form of a contravariant adjunction between decidable coherent categories and a category of groupoids which is a `duality' in the sense that the counit components at pretoposes are equivalences. There are several possibilities for choosing suitable categories of groupoids with respect to which this adjunction can be constructed. We chose one which seems natural for our purposes in that it is easy to specify, makes it straightforward to extract decidable coherent categories from the groupoids in it, and is quite inclusive so as to leave scope for further restrictions. Specifically, we take the category of groupoids to consist of those topological groupoids which are `weakly coherent' in the sense that the induced equivariant sheaf toposes have a generating set of compact objects and the property that compact objects are closed under finite products, with morphisms between such groupoids being those with induced inverse image functors that preserve compact objects. We give an intrinsic characterization of such `weakly coherent' groupoids as well as of a more restricted class of `decidable coherent' groupoids to which the adjunction can be restricted. We leave the intrinsic characterization of the morphisms, or of further restrictions with respect to morphisms, to future work. Section  \ref{Subsection: Weakly coherent groupoids} introduces weakly coherent groupoids, Section \ref{Subsection: Representation theorem for decidable coherent categories} translates Theorem \ref{Theorem: Representation theorem} to decidable coherent categories, and Sections \ref{Subsection: DC, The Semantical Functor}--\ref{Subsection: DC, The Syntax-Semantics Adjunction} construct the adjunction.
\subsection{Groupoids and sheaves}
\label{Subsection: Weakly coherent groupoids}
%
%
%
%
\subsubsection{Moerdijk's Site Construction for Topological Groupoids}
\label{Subsubsection: Moerdijks site construction}

We recall the essentials of the site description for toposes of equivariant sheaves on open topological groupoids given in \cite{moerdijk:88}. Let $\thry{G}=(G_1\rightrightarrows G_0)$ be an  open topological groupoid; let $N\subseteq G_1$ be an open subset of arrows that is closed under inverses and compositions; and let $U=s(N)=t(N)\subseteq G_0$. We refer to the pair $(U,N)$ as an \emph{open subgroupoid} of \thry{G}. Then
\[s^{-1}(U)/_{\sim_N}\to^t G_0\]
is an equivariant sheaf over $G_0$, denoted \pair{\thry{G},U,N}, where $f\sim_N g$ iff $t(f)=t(g)$ and $g^{-1}\circ f \in N$. The action is defined by composition,
\[\pair{g:y\rightarrow z,[f:x\rightarrow y]}\mapsto [g\circ f].\]
The set of objects of this form is a generating set for \Eqsheav{G_1}{G_0}, that is, a set of objects such that for all equivariant sheaves there exists a covering (epimorphic) family of arrows with domains in the set. Briefly, this is because if \pair{\rho, r:R\rightarrow G_0} is an equivariant sheaf and $u:U\rightarrow R$ is a continuous section,
then $N=\cterm{f\in s^{-1}(U)\cap t^{-1}(U)}{\rho(f,u(s(f))=u(t(f))}$ is an open set of arrows closed under inverses and compositions. The map $e:U\rightarrow s^{-1}(U)/_{\sim_N}$ defined by $x\mapsto [1_x]$ is a continuous section, which we will refer to as the \emph{canonical section},  and $u$ lifts to a morphism $\hat{u}:\pair{\thry{G},U,N}\rightarrow \pair{\rho, r:R\rightarrow G_0}$ such that $u=\hat{u}\circ e$.
\begin{equation}\label{Eq: GUN generating}\bfig
\ptriangle/>`<-`<-/<500,350>[s^{-1}(U)/_{\sim_N}`R`U;\hat{u}`e`]
\dtriangle/<-`>`/<500,350>[R`U`G_0;u`r`]
\place(250,0)[\subseteq]
\efig\end{equation}
Specifically, $\hat{u}([f:v\rightarrow x])=\rho(f, u(v))$. Since
\begin{align*}
&\hat{u}([f:v\rightarrow x])=\hat{u}([g:v'\rightarrow x])\\
&\Rightarrow \rho(f, u(v))=\rho(g, u(v'))\\
&\Rightarrow \rho(g^{-1}\circ f, u(v))= u(v')\\
&\Rightarrow f\sim_N g
\end{align*}
$\hat{u}$ is 1-1, so that, in fact, every equivariant sheaf is covered by its subobjects of the form \pair{\thry{G},U,N}.
Refer to the full subcategory of  objects of the form \pair{\thry{G},U,N}  as the \emph{Moerdijk site} for \Eqsheav{G_1}{G_0} (the implicit coverage is the canonical one inherited from \Eqsheav{G_1}{G_0}), and denote it by $\cat{S}_{\thry{G}}$. The following properties of Moerdijk sites will be of use and we state them in a single lemma here for reference (cf.\ \cite{moerdijk:88}, in particular Lemma 6.2).
\begin{Lemma}\label{Lemma: GUN subobjects}Let \thry{G} be an  open topological groupoid.

(1) The Moerdijk site of \Eqsheav{G_1}{G_0} is closed under subobjects. In particular, let \pair{\thry{G},U,N} be an object of \Eqsheav{G_1}{G_0} Then
\[V \mapsto s^{-1}(V)/_{\sim_{N\upharpoonright_{V}}}\]
defines an isomorphism between the frame of open subsets of $U$ that are closed under $N$ and the frame of subobjects of  \pair{\thry{G},U,N}.

(2) If $f:\thry{H}\rightarrow \thry{G}$ is a morphism of open topological groupoids and $(U,N)$ is an open subgroupoid of \thry{G}, then $(f_0^{-1}(U), f_1^{-1}(N))$ is an open subgroupoid of \thry{H}. The canonical `comparison' morphism
 $\hat{k}: \pair{\thry{H},f_0^{-1}(U), f_1^{-1}(N)}\rightarrow f^*(\thry{G},U,N)$ defined by $[g]\mapsto [f_1(g)]$ is an isomorphism if and only if for each $h:u\rightarrow f_0(x)$ in $s^{-1}(U)$ there exists $g:v\rightarrow x$ in $s^{-1}(f_0^{-1}(U))$ such that $h^{-1}\circ f_1(g)\in N$.

(3) In particular, if $f:\thry{H}\rightarrow \thry{G}$ is a morphism of open topological groupoids such that for all $(h:x\rightarrow f_0(y))\in G_1$ there exists $g\in H_1$ such that $t(g)=y$ and $f_1(g)=h$, then $f^*:\Eqsheav{G_1}{G_0}\rightarrow \Eqsheav{H_1}{H_0}$ restricts to a morphism of Moerdijk-sites such that
\[f^*(\pair{\thry{G},U,N})\cong\pair{\thry{H},f_0^{-1}(U),f_1^{-1}(N)}\]
for all open subgroupoids $(U,N)$ of \thry{G}.
\begin{proof}

(1) The inverse is given by
\[\bfig
\square/ >->`->`->` >->/<700,350>[V`S`U`s^{-1}(U)/_{\sim_{N}};`\subseteq`\subseteq`e]
\place(100,250)[\pbangle]
\efig\]

(2) Consider the diagram
\[\bfig
\square|alrm|/>`<-`>`>/<1200,500>[s^{-1}(f_0^{-1}(U))/_{\sim_{N_k}}`H_0\times_{G_0}s^{-1}(U)/_{\sim_N}`V=f_0^{-1}(U)`H_0;\hat{k}`e``\subseteq]
\place(1300,400,)[\pbangle]
\morphism<1200,500>[V=f_0^{-1}(U)`H_0\times_{G_0}s^{-1}(U)/_{\sim_N};k]
\square(1200,0)<1200,500>[H_0\times_{G_0}s^{-1}(U)/_{\sim_N}`s^{-1}(U)/_{\sim_N}`H_0`G_0;``t`f_0]
\efig\]
where $k$ is the section obtained by pulling back the canonical section $e:U\rightarrow s^{-1}(U)/_{\sim_N}$---so that $k(v)=\pair{v,[1_{f_0(v)}]_{\sim_N}}$---and $N_t\subseteq H_1$ and $\hat{k}$ are the induced open subgroupoid and morphism. Now, we have
\begin{align*}
N_k&=\cterm{g\in s^{-1}(V)\cap t^{-1}(V)}{f_1(g)\circ [1_{f_0(s(g))}]_{\sim_N}=[1_{f_0(t(g))}]_{\sim_N}}\\
   &=\cterm{g\in s^{-1}(V)\cap t^{-1}(V)}{f_1(g)\in N}\\
   &=f_1^{-1}(N)
\end{align*}
and so $\pair{\thry{H},f_0^{-1}(U),f_1^{-1}(N)}=\pair{\thry{H},f_0^{-1}(U),N_k}$, and as noted above $\hat{k}$ is injective. Since
\[\hat{k}([g:v\rightarrow x]_{\sim_{N_k}})=\pair{x,f_1(g)\circ[1_{f_0(v)}]_{\sim_N}}=\pair{x,[f_1(g)]_{\sim_N}}\]
it is clear that $\hat{k}$ is surjective if and only if for each $h:u\rightarrow f_0(x)$ in $s^{-1}(U)$ there exists $g:v\rightarrow x$ in $s^{-1}(f_0^{-1}(U))$ such that $h^{-1}\circ f_1(g)\in N$. (3) is clearly implied.
%
%
\end{proof}
\end{Lemma}
For an open subgroupoid $(U,N)$ of \thry{G} say that a morphism $f:\thry{H}\to\thry{G}$ of open topological groupoids is $N$-\emph{fibration} if it satisfies the condition of Lemma \ref{Lemma: GUN subobjects} (2). Say that $f$ is a \emph{fibration} if it satisfies he condition of  Lemma \ref{Lemma: GUN subobjects} (3).

Morphisms in the Moerdijk site can also be described in terms of open sets (cf.\ \cite[6.3]{moerdijk:88}):
\begin{lemma}\label{Lemma: Characterizing morphisms}
Given two objects, $\pair{\thry{G},U,N}$ and $\pair{\thry{G},V,M}$, in $\Eqsheav{G_1}{G_0}$, morphisms between them,
\[\bfig
\Vtriangle<400,350>[s^{-1}(U)/_{\sim_N}`s^{-1}(V)/_{\sim_M}`G_0;\hat{k}`t`t]
\efig\]
in  are in one-to-one correspondence with open subsets
\[K\subseteq s^{-1}(V)\]
satisfying the following properties:
\begin{enumerate}[i)]
\item $c(K\times_{G_0}M)\subseteq K$, i.e., $K$ is closed under $\sim_{M}$;
\item $t(K)=U$;
\item $c(K^{-1}\times_{G_0}K)\subseteq M$, i.e., if two arrows in $K$ share a codomain then they are $\sim_{M}$-equivalent;
\item $c(N\times_{G_0}K)\subseteq K$, i.e., if $f:x\rightarrow y$ is in $K$ and $g:y\rightarrow z$ is in $N$ then $g\circ f\in K$.
\end{enumerate}
Moreover, $\hat{k}$ can be thought of as `precomposing with $K$', in the sense that $\hat{k}([f]_{\sim_N})=[f\circ g]_{\sim_M}$ for some (any) $g\in K$ such that $t(g)=s(f)$.
\begin{proof}
Let a morphism $\hat{k}:s^{-1}(U)/_{\sim_N}\to s^{-1}(V)/_{\sim_M}$ be given, and let $k:U\rightarrow s^{-1}(V)/_{\sim_M}$ be the composition of $\hat{k}$ with the canonical section $e:U\rightarrow s^{-1}(U)/_{\sim_N}$, so that $\hat{k}([g: u\rightarrow x])=g\circ t(u)$. Pull $k$ back along the quotient map to obtain an open set, $K\subseteq s^{-1}(V)$,
\[\bfig
\square/>` >->` >->`->>/<600,300>[K`U`s^{-1}(V)`s^{-1}(V)/_{\sim_M};`\subseteq`k`q]
\place(100,200)[\pbangle]
\efig\]
Properties (i)--(iii) then easily follow. Property (iv) follows since  $[g\circ f]=g\circ[f]=g\circ k(t(f))= g\circ \hat{k}([1_{t(f)}])=\hat{k}([g])=\hat{k}([1_{t(g)}])=k(t(g))$.

Conversely, let $K\subseteq s^{-1}(V)$ be given and assume $K$ satisfies properties (i)--(iv). Map an object $x\in U$ to the set $t(x):=\cterm{f\in K}{t(f)=x}$. This yields a well-defined function $t:U\to s^{-1}(V)/_{\sim_M}$ by properties (i), (ii) and (iii). And since $t(U)=q(K)$ and $q:s^{-1}(V)\to s^{-1}(V)/_{\sim_M}$ is open, $t(U)$ is open, so $t$ is a continuous section. By property (iv), $t:U\to s^{-1}(V)/_{\sim_M}$ can easily be seen to determine a morphism $\hat{t}:s^{-1}(U)/_{\sim_N}\to s^{-1}(V)/_{\sim_M}$. It is clear that these constructions are inverse to each other. The final statement of the lemma is then clear from the fact that $\hat{t}$ commutes with the actions.
\end{proof}
\end{lemma}
This allows us to translate the notion of a generating set to subgroupoids:
\begin{lemma}\label{Lemma: Generating set of GUNS}
Let $S=\cterm{(U_i,N_i)}{i\in I}$ be a set of open subgroupoids of \thry{G}. The induced objects \pair{\thry{G},U_i,N_i} form a generating set for \Eqsheav{G_1}{G_0} if and only if for all open subgroupoids $(V,M)$ and all $v\in V$ there exists $N_i\in S$ and open subset $K\subseteq s^{-1}(V)\cap t^{-1}(U_i)$  satisfying the conditions of Lemma \ref{Lemma: Characterizing morphisms} such that $v\in s(K)$.
\begin{proof}Straightforward by Lemma \ref{Lemma: Characterizing morphisms}.
\end{proof}
\end{lemma}
Say, accordingly, that a set of open subgroupoids satisfying the conditions of Lemma \ref{Lemma: Generating set of GUNS} is \emph{generating}. Finally, we note that groupoids of $\kappa$-small models  $\thry{G}_{\theory}$ (for theories with enough such models) have generating sets of subgroupoids of the following form.
\begin{lemma}\label{Lemma: Definables are GUNS}
Let \theory\ be a (coherent decidable) theory with enough $\kappa$-small models, and $\thry{G}_{\theory}$ its groupoid of $\kappa$-small models, as in Section \ref{Section: Reprentation Theorem}. Let \syntob{\alg{x}}{\phi} be a formula which implies that the variables in \alg{x} are sortwise distinct (for instance a conjunction of an arbitrary formula with a formula stating that they are sortwise distinct). Let \alg{a} be a list of distinct elements from $\mathds{S}$ of the same length as \alg{x}.  Then the sheaf $\pair{\thry{G}_{\theory},U,N}$ with $U=\bopen{\syntob{\alg{x}}{\phi},\alg{a}}$ and
\[N=\left(\begin{array}{c}
\syntob{\alg{x}}{\phi},\alg{a}  \\
\alg{x}:\alg{a} \mapsto \alg{a}   \\
 \syntob{\alg{x}}{\phi},\alg{a}
\end{array}\right)\]
is isomorphic to the definable sheaf \pair{\sox{\alg{x}}{\phi}\rightarrow X_{\theory},\theta}.
\begin{proof}Consider the continuous section $k:U\rightarrow \sox{\alg{x}}{\phi}$ defined by $\alg{M}\mapsto \pair{\alg{M},\alg{a}}$. Then the open subgroupoid induced by $k$ is precisely $(U,N)$, and $k$ lifts to a 1-1 morphism of equivariant sheaves $\hat{k}:s^{-1}(U)/_{\sim_N} \to \sox{\alg{x}}{\phi}$. This morphism is a surjection by Lemma \ref{Lemma: Stabilization of basic open U}.
\end{proof}
\end{lemma}
\begin{corollary}\label{Corollary: Definable GUNS gnerate}
Open subgroupoids of the form described in Lemma \ref{Lemma: Definables are GUNS} form a generating set of subgroupoids for $\thry{G}_{\theory}$.
\begin{proof}By (the proof of) Lemma \ref{Lemma: definables are generators}.
\end{proof}
\end{corollary}
\subsubsection{Weakly Coherent Groupoids}
\label{Subsubsection: Weakly coherent groupoids}
Recall (e.g.\ from \cite{elephant1}) the following: (1) An object $A$ in a topos is \emph{compact} if every covering of it (in terms of morphisms or subobjects) has a finite subcovering. (2) An object $C$ in a topos \cat{E} is \emph{coherent} if (a) it is compact; and (b) it is \emph{stable}, in the sense that for any morphism $f:B\to A$ with $B$ compact, the domain $K$ of the kernel pair of $f$,
\[ K\two^{k_1}_{k_2} B\to^f A\]
is again compact. (3) A topos is \emph{compact} if the terminal object is compact. (4) A topos is \emph{coherent} if it has a generating set of compact objects the full subcategory of which is Cartesian, equivalently that the topos is of the form \Sh{\cat{C},P} for \cat{C} a coherent category and $P$ the coherent coverage. (5) In a coherent topos, \Sh{\cat{C},P} say, the full subcategory, $\cat{D}\embedd \Sh{\cat{C},P}$, of coherent objects is a pretopos. Furthermore,  \cat{D} forms a coherent site for \Sh{\cat{C},P}; includes \cat{C} (through the Yoneda embedding); and is a pretopos completion of \cat{C}. Thus one can recover \cat{C} from \Sh{\cat{C},P} up to pretopos completion as the coherent objects. (6) Any compact decidable object in a coherent topos is coherent. The full subcategory of decidable objects in a coherent category is again a coherent category. Accordingly, the full subcategory of compact decidable objects in a coherent topos is a decidable coherent category. We shall say that a topos is \emph{decidable coherent} if it is on the form \Sh{\cat{C},P} for \cat{C} a decidable coherent category and $P$ the coherent coverage.
\begin{definition}\label{Definition: Weakly coherent topos}
Say that a topos is \emph{weakly coherent} if there exists a generating set of compact objects and a finite product of compact objects is compact (so that, in particular, the terminal object is compact).
\end{definition}
\begin{remark}\label{Remark: Coherent and weakly coherent}
Thus a topos is weakly coherent if there exists a generating full subcategory of compact objects closed under finite products, and coherent if there exists generating full subcategory of compact objects closed under finite products and equalizers. An example of a topos which is weakly coherent but not coherent can be constructed by taking presheaves on a small category which has finite products but not fc-equalizers (see \cite{bekekarazerisrosicky:05} for relevant results and the definition of fc-equalizers).
\end{remark}
\begin{lemma}\label{Lemma: Compact decidables in weakly compact topos}
A compact decidable object in a weakly coherent topos is a coherent object. The full subcategory of compact decidable objects in a weakly coherent topos is a  decidable coherent category.
\begin{proof}Straightforward.
\end{proof}
\end{lemma}
In addition to the notion of a compact object in a topos and the usual notion of a compact space, we introduce the notion of a \emph{s-compact} open subgroupoid (`s' for `sheaf'). Say that an open subgroupoid $(U,N)$ of an open topological groupoid \thry{G} is \emph{s-compact} if $U$ is compact in the lattice of open subsets of $U$ that are closed under $N$. Thus, by Lemma \ref{Lemma: GUN subobjects}, an open subgroupoid $N$ of \thry{G} is s-compact precisely when the induced equivariant sheaf \pair{\thry{G},U,N} is compact. Accordingly, we make the following definition:
\begin{definition}\label{Definition: Compact groupoid}
Say that an open topological groupoid \thry{G} is \emph{s-compact} if $G_0$ is compact with respect to open subsets that are closed under $G_1$. Say that \thry{G} is \emph{locally s-compact} if for every open subgroupoid $(U,N)$ and every $x\in U$ there is an open neighborhood $x\in V\subseteq U$ such that $V$ is closed under $N$ and $V$ is compact with respect to open subsets closed under $N$.
\end{definition}
Note that \pair{\thry{G},G_0,G_1} is the terminal object in \Eqsheav{G_1}{G_0}.  Therefore,   Lemma \ref{Lemma: GUN subobjects} immediately gives us the following.
\begin{lemma}\label{Lemma: Compact and locally compact}
\thry{G} is s-compact if and only if (the terminal object in) \Eqsheav{G_1}{G_0} is compact. Furthermore, the following are equivalent:
\begin{enumerate}[(i)]
\item \thry{G} is locally s-compact;
\item every object \pair{\thry{G},U,N} in $\cat{S}_{\thry{G}}\hookrightarrow\Eqsheav{G_1}{G_0}$ is a join of compact subobjects;
\item the compact objects in $\cat{S}_{\thry{G}}$ form a generating set for \Eqsheav{G_1}{G_0};
\item \Eqsheav{G_1}{G_0} has a generating set of compact objects.
\end{enumerate}
\begin{proof}The equivalence of (i) and (ii) is direct from Definition \ref{Definition: Compact groupoid} and Lemma \ref{Lemma: GUN subobjects} (1). (ii)$\Rightarrow$(iii)$\Rightarrow$(iv) is immediate. (iv) implies that any \pair{\thry{G},U,N} can be covered by compact objects, and since images of compact objects are compact and $\cat{S}_{\thry{G}}$ is closed under subobjects, (ii) follows.
\end{proof}
\end{lemma}
An equivariant sheaf topos $\Eqsheav{G_1}{G_0}$ being weakly coherent now translates into the following property of  open subgroupoids of \thry{G}.
Consider a pair of open subgroupoids $(U,N)$ and $(V,M)$. Starting out with the (sub)space $t^{-1}(U)\cap s^{-1}(V)$, form the quotient space
\begin{equation}\label{Equation: Index space}\mathrm{DC}(M,N)=t^{-1}(U)\cap s^{-1}(V)/_{_{N}\sim_{M}}\end{equation}
by the equivalence relation ${_{N}\sim_{M}}$ defined by $(f:v_1\rightarrow u_1){_{N}\sim_{M}}(g:v_2\rightarrow u_2)$ if there exists arrows $n\in N,\ m\in M$ forming a commutative square:
\[\bfig
\square/<-`>`>`<-/<400,300>[u_1`v_1`u_2`v_2;f`n`m`g]
\efig\]
Call $\mathrm{DC}(M,N)$ the \emph{double-coset space} of the open subgroupoids $M$ and $N$.
\begin{lemma}\label{Lemma: Compact bi-index}
Let $\pair{\thry{G},U,N}$ and $\pair{\thry{G},V,M}$ be two objects in \Eqsheav{G_1}{G_0}. Then the product $\pair{\thry{G},U,N}\times\pair{\thry{G},V,M}$ is compact (in \Eqsheav{G_1}{G_0}) if and only if the double-coset space $\mathrm{DC}(M,N)$ is compact (as a topological space).
\begin{proof}
Consider the square
\begin{equation}\label{Equation: Compact bi-index square}\bfig
\square/->>`->>`->>`->>/<1500,500>[s^{-1}(U)\times_{G_0}s^{-1}(V)`t^{-1}(U)\cap s^{-1}(V)`s^{-1}(U)/_{\sim_N}\times_{G_0}s^{-1}(V)/_{\sim_M}`t^{-1}(U)\cap s^{-1}(V)/_{_N\sim_M};c\circ\pair{i,1}`q\times q`k`p]
\efig\end{equation}
where $k$ is the quotient map, and $p$, as the top horizontal map, inverts the left arrow and composes:
\[ p\pair{[f]_{\sim_N},[g]_{\sim_M}}=[f^{-1}\circ g]_{_N\sim_M}\]
Then one easily sees that: i) $p$ is well-defined; ii) the square commutes; iii) all maps of the diagram (\ref{Equation: Compact bi-index square}) are, as indicated, surjective;  iv) the left horizontal map $q\times q$ and the top vertical map $c\circ\pair{i,1}$ are open maps (so, in particular, $p$ is continuous); moreover, v) for all (open) sets $W\subseteq s^{-1}(U)/_{\sim_N}\times_{G_0}s^{-1}(V)/_{\sim_M}$ we have $(c\circ\pair{i,1})( (q\times q)^{-1}(W))= k^{-1}(p(W))$; therefore, vi) the bottom horizontal map $p$ is also an open surjection; and, finally, vii) for a pair of arrows $f:u\rightarrow x \leftarrow v:g$ with $u\in U,\ v\in V$ and an arrow $h:x\rightarrow y$, we have $p\pair{[h\circ f]_{\sim_N},[h\circ g]_{\sim_M}}=p\pair{[f]_{\sim_N},[g]_{\sim_M}}$. From this, it is readily verified that $p^{-1}$ is a frame isomorphism between open subsets of $\mathrm{DC}(M,N)$ and open sets of $s^{-1}(U)/_{\sim_N}\times_{G_0}s^{-1}(V)/_{\sim_M}$ which are closed under composing with arrows from $G_1$, with image along $p$ being the inverse. As such, it yields an isomorphism between open subsets of $\mathrm{DC}(M,N)$ and subobjects of $\pair{\thry{G},U,N}\times\pair{\thry{G},V,M}$, and so the latter is compact if and only if the space $\mathrm{DC}(M,N)$ is.
\end{proof}
\end{lemma}
Putting this together with Lemma \ref{Lemma: Compact and locally compact}, we have:
\begin{proposition}\label{Proposition: T-groupoids}
\Eqsheav{G_1}{G_0} is weakly coherent if and only if \thry{G} is s-compact and locally s-compact and for any s-compact open subgroupoids $(V,M)$ and $U,N$ the double-coset space $\mathrm{DC}(M,N)$ is a compact space.
\begin{proof} By Lemma \ref{Lemma: Compact and locally compact}, Lemma \ref{Lemma: Compact bi-index} and the fact that the existence in a topos \topo{E}  of a generating set $S$ of compact objects such that $A\times B$ is compact for all $A,B\in S$ implies that a binary product of compact objects in \topo{E} is compact.
\end{proof}
\end{proposition}
\begin{remark}
As a special case, we obtain the characterization of coherent groups from  \cite[D3.4]{elephant1}. For a topological group $G$ and open subgroups $M,N\subseteq G$, $\mathrm{DC}(M,N)$ is the discrete space of double cosets, \cterm{NgM}{g\in G}. Since $G$ is automatically s-compact and locally s-compact in the sense of Definition \ref{Definition: Compact groupoid}, the topos of continuous G-sets $\mathbf{Cont}(G)\simeq \Eqsheav{G}{\{\star\}}$  is weakly coherent if and only if these sets are finite for all open subgroups, i.e.\ if $G$ has \emph{finite bi-index}, in the sense of \emph{loc.cit}. And since $\mathbf{Cont}(G)$ is Boolean, it is coherent if and only if it is weakly coherent.
\end{remark}
\begin{definition}\label{Definition: Weakly compact groupoids}

(1) Say that a topological groupoid is \emph{weakly coherent} if it is open and satisfies the conditions of Proposition \ref{Proposition: T-groupoids}.

(2) Say that morphism of open topological groupoids $f:\thry{H}\to\thry{G}$ is
%
\emph{compact} if the induced inverse image functor preserves compact objects.
%

(3) Let \alg{wcGpd} be the category of weakly coherent groupoids and compact morphisms.
\end{definition}
%
%
%
Note that the inverse image functor $f^*:\Eqsheav{G_1}{G_0}\to\Eqsheav{H_1}{H_0}$ induced by a compact morphism of weakly coherent groupoids $f:\thry{H}\to\thry{G}$ restricts to a (coherent) functor between the (coherent) subcategories of compact decidable objects. We end this section by briefly considering decidable objects.
%
%
%
%
\begin{lemma}\label{Lemma: Decidable GUNS}
An object of the form \pair{\thry{G},U,N} is decidable if and only if $N\subseteq s^{-1}(U)\cap t^{-1}(U)$ is clopen (that is, if $N$ is a closed subset of $s^{-1}(U)\cap t^{-1}(U)$).
\begin{proof}The bottom horizontal maps in the following diagram
\[\bfig
\square/<<-`>`>`<<-/<1400,400>[\Delta`\sim_{N}`s^{-1}(U)/_{\sim_N}\times_{G_0}s^{-1}(U)/_{\sim_N}`s^{-1}(U)\times_{G_0}s^{-1}(U);`\subseteq`\subseteq`q\times q]
\square(1400,0)/->>`>`>`->>/<1000,400>[\sim_{N}`N`s^{-1}(U)\times_{G_0}s^{-1}(U)`s^{-1}(U)\cap t^{-1}(U);``\subseteq`c\circ\pair{i,1_{G_1}}]
\place(1300,300)[\pbangler]
\place(1500,300)[\pbangle]
\efig\]
are both open surjections.
\end{proof}
\end{lemma}
Say, accordingly,  that an open subgroupoid $(U,N)$ is \emph{decidable} if $N$ is a closed subset of  $s^{-1}(U)\cap t^{-1}(U)$. Say that an open topological groupoid is \emph{coherent decidable} if it satisfies the conditions of the following proposition.
\begin{proposition}\label{Proposition: Coherent decidable groupoid}
\Eqsheav{G_1}{G_0} is coherent decidable if and only if \thry{G} is s-compact and there exist a generating set \cterm{N_i\subseteq G_1}{i\in I} of s-compact decidable subgroupoids such that $\mathrm{DC}(N_i,N_j)$ is a compact space for all $i,j\in I$.
\begin{proof}The only if direction follows by Lemma \ref{Lemma: Compact and locally compact}, the fact that a subobject of a decidable object is decidable, and that compact objects are closed under finite products in a coherent topos. The if direction follows since the generating set of s-compact open subgroupoids induces a generating set $S$ of compact decidable objects in \Eqsheav{G_1}{G_0} such that $A\times B$ is compact for $A,B\in S$. In particular, therefore, \Eqsheav{G_1}{G_0} is weakly coherent.   Since a finite product of decidable objects is decidable and a complemented subobject of a compact object is compact, any finite limit of objects from $S$ is again compact and decidable.
\end{proof}
\end{proposition}
\subsection{Representation Theorem for Decidable Coherent Categories}
\label{Subsection: Representation theorem for decidable coherent categories}
Since one can pass back and forth between coherent theories and categories by taking the theories of categories
and the syntactic categories of theories, Theorem \ref{Theorem: Representation theorem} translates to a representation result for decidable coherent categories. For this and the following sections, we fix a choice of set from which to construct models as follows. Chose a regular cardinal $\kappa$ and let $\mathds{S}$ be the set of all hereditarily smaller than $\kappa$ sets. Let  $\Sets_\kappa$ be the small, decidable coherent category of sets with elements from $\mathds{S}$. Thus, in the notation of e.g.\ \cite{kunen:80}, $\mathds{S}=H(\kappa)$ and $\Sets_\kappa$ is the full subcategory of \Sets\ the objects of which are subsets of $H(\kappa)$.  We refer to such sets as $\kappa$-\emph{small} (although unless $\kappa$ is countable, such sets need not themselves have cardinality $\leq \kappa$). We translate Theorem \ref{Theorem: Representation theorem} to the setting of decidable coherent categories and groupoids of $\Sets_{\kappa}$-valued coherent functors and invertible natural transformations
between them, and use this form of the theorem in the construction of a duality theorem for decidable coherent categories.

Let \cat{D} be a (small) decidable coherent category, that is, a category with finite limits, images, stable
covers, stable finite unions of subobjects, and complemented diagonals (\cite[A1.4]{elephant1}). We say that \cat{D} \emph{has enough $\kappa$-small models} if the
coherent functors from \cat{D} to $\Sets_{\kappa}$,
\[\cat{D}\to\Sets_{\kappa}\]
jointly reflect covers, in the sense that
%
%
%
%
for any family of arrows $f_i:C_i\rightarrow C$ in \cat{D}, if for all $M:\cat{D}\to\Sets_{\kappa}$ in
$X_{\cat{D}}$
\[ \bigcup_{i\in I}\funksjon{Im}{M(f_i)}= M(C)\]
then there exists $f_{i_1}, \ldots, f_{i_n}$ such that $\funksjon{Im}{f_{i_1}}\vee\ldots\vee
\funksjon{Im}{f_{i_n}}=C$.
%
%
\begin{definition}\label{Definition: DC, DCkappa}
Let \alg{dCoh} be the category of small decidable coherent categories with coherent functors between them. Let
\alg{dCoh_{\kappa}} be the full subcategory of those categories which have enough $\kappa$-small models, i.e.\ such that the coherent functors to
$\Sets_{\kappa}$ jointly reflect covers.
\end{definition}
Note that any coherent category which is of cardinality $\leq\kappa$ is in $\alg{dCoh}_{\kappa}$, as are all
distributive lattices.
\begin{definition}\label{Definition: DC, Coherent topology}
For \cat{D} in $\alg{dCoh}_{\kappa}$:
\begin{enumerate}
\item Let $X_{\cat{D}}$ be the set of coherent functors from \cat{D} to $\Sets_{\kappa}$,
\[X_{\cat{D}}=\homset{\alg{dCoh}}{\cat{D}}{\Sets_{\kappa}}.\]
\item Let $G_{\cat{D}}$ be the set of invertible natural transformations between functors in $X_{\cat{D}}$, with
$s$ and $t$ the source and target, or domain and codomain, maps,
\[s,t:G_{\cat{D}}\rightrightarrows X_{\cat{D}}\]
Denote the resulting groupoid by $\thry{G}_{\cat{D}}$.
%
%
\item The \emph{coherent topology} on $X_{\cat{D}}$ is given by taking as a subbasis the collection of sets of the
form,
\begin{align*}
&\bopen{\alg{f},\alg{a}} =\bopen{\pair{f_1:A\rightarrow B_1,\ldots,f_n:A\rightarrow B_n},\pair{a_1,\ldots,a_n}}\\
  &=\cterm{M\in X_{\cat{D}}}{\fins{x\in M(A)}M(f_1)(x)=a_1\wedge \ldots \wedge M(f_n)(x)=a_n}
\end{align*}
for a finite span of arrows
\[\bfig
\Atriangle/>`>`/<500,500>[A`B_1`B_n;f_1`f_n`]\place(250,0)[\ldots]\place(750,0)[\ldots]
\morphism(500,500)<0,-500>[A`B_i;f_i]
\efig\]
in \cat{D} and $a_1,\ldots,a_n\in \mathds{S}$.  Let the \emph{coherent topology} on $G_{\cat{D}}$ be the
coarsest topology such that $s,t:G_{\cat{D}}\rightrightarrows X_{\cat{D}}$ are both continuous and all sets of
the form
\[\bopen{A,a\mapsto b}=\cterm{\alg{f}:M\rightarrow N}{a\in M(A) \wedge f_A(a)=b}\]
are open, for $A$ an object of \cat{D} and $a,b\in \mathds{S}$.
\end{enumerate}
\end{definition}
\begin{remark}\label{Remark: X, propositional case}
Note that if \cat{D} is a Boolean algebra and we require coherent functors into \Sets\ to send the terminal object to the distinguished terminal object $\{\star\}$ in \Sets, then $X_{\cat{D}}$ is the Stone space of \cat{D}.
\end{remark}
For \cat{D} in \alg{dCoh_{\kappa}}, we have the decidable coherent theory $\theory_{\cat{D}}$ of \cat{D}, and
its syntactic category, \synt{C}{\theory_{\cat{D}}} (as described in Section \ref{Subsection: Theories and Models}). Sending an object, $D$, in \cat{D} to the object \syntob{x:D}{\top} in \synt{C}{\theory_{\cat{D}}},
and an arrow $f:C\rightarrow D$ to \syntob{x:C,y:D}{f(x)=y}, defines a functor
\[ \zeta_{\cat{D}}:\cat{D}\to\synt{C}{\theory_{\cat{D}}} \]
which is one half of an equivalence, the other half being the (or a choice of) canonical
$\theory_{\cat{D}}$-model in \cat{D}.

Now, any $\theory_{\cat{D}}$-model \alg{M} with elements from $\mathds{S}$ can be seen as a coherent functor,
$\alg{M}:\synt{C}{\theory_{\cat{D}}}\to\Sets_{\kappa}$. Composition with $\zeta_{\cat{D}}$
\[\bfig
\ptriangle/>`>`<-/<750,500>[\cat{D}`\Sets_{\kappa}`\synt{C}{\theory_{\cat{D}}};\alg{M}\circ
\zeta_{\cat{D}}`\zeta_{\cat{D}}`\alg{M}]
\efig\]
induces restriction functions
\[\bfig
\square|arrb|/>`@{>}@<5pt>`@{>}@<5pt>`>/<1000,500>[G_{\theory_{\cat{D}}}`G_{\cat{D}}`X_{\theory_{\cat{D}}}`
X_{\cat{D}}; \phi_1`t`t`\phi_0]
\square|allb|/>`@{>}@<-5pt>`@{>}@<-5pt>`>/<1000,500>[G_{\theory_{\cat{D}}}`G_{\cat{D}}`X_{\theory_{\cat{D}}}`
X_{\cat{D}}; \phi_1`s`s`\phi_0]
\efig\]
commuting with source and target (as well as composition and insertion of identities) maps.
\begin{lemma}
The maps $\phi_0$ and $\phi_1$ are homeomorphisms of spaces.
\begin{proof}
Any coherent functor $M:\cat{D}\to\Sets_{\kappa}$ lifts to a unique $\theory_{\cat{D}}$-model
$\alg{M}:\synt{C}{\theory_{\cat{D}}}\to \Sets_{\kappa}$, to yield an inverse $\psi_0:X_{\cat{D}}\rightarrow
X_{\theory_{\cat{D}}}$ to $\phi_0$. Similarly, an invertible natural transformation of functors $f:M\rightarrow
N$ lifts to a unique $\theory_{\cat{D}}$-isomorphism $\alg{f}:\alg{M}\rightarrow \alg{N}$ to yield an inverse
$\psi_1:G_{\cat{D}}\rightarrow G_{\theory_{\cat{D}}}$ to $\phi_1$. We verify that these four maps are all
continuous. For a subbasic open
\[
U=\bopen{\pair{f_1:A\rightarrow B_1,\ldots,f_n:A\rightarrow B_n},\pair{a_1,\ldots,a_n}} \subseteq X_{\cat{D}}\]
we have
\[
\phi_0^{-1}(U)= \bopen{\syntob{y_1:B_1,\ldots,y_n:B_n}{\fins{x:A}\bigwedge_{1\leq i\leq n}f_i(x)=y_i},\vec{a}}
\]
so $\phi_0$ is continuous. To verify that $\psi_0$ is continuous, there are two cases to consider, namely
non-empty and empty context. For basic open
\[\bopen{\syntob{x:A_1,\ldots, x_n:A_n}{\phi}, \pair{a_1,\ldots,a_n}}
\subseteq X_{\theory_{\cat{D}}}\]
the canonical interpretation of $\theory_{\cat{D}}$ in \cat{D} yields a subobject of a product in \cat{D},
\[\csem{x:A_1,\ldots, x_n:A_n}{\phi}\subobject A_1\times\ldots\times A_n\to^{\pi_i}A_i.\]
Choose a monomorphism $r:R\mon A_1\times\ldots\times A_n$ representing that subobject. Then
\begin{align*}
&\psi_0^{-1}(\bopen{\syntob{x:A_1,\ldots, x_n:A_n}{\phi}, \pair{a_1,\ldots,a_n}})\\
&= \bopen{\pair{\pi_1\circ r:R\rightarrow A_1,\ldots,\pi_n\circ r: R\rightarrow A_n}, \pair{a_1,\ldots,a_n}}
\end{align*}
and it is clear that this is independent of the choice of product diagram and of representing monomorphism. For
the empty context case, consider a basic open $U=\bopen{\syntob{}{\varphi}, \star}$, where $\varphi$ is a
sentence of $\theory_{\cat{D}}$ and $\star$ is the element of the distinguished terminal object of \Sets\
(traditionally $\star=\emptyset$, notice that  any $\bopen{\syntob{}{\varphi},a}$ with $a\neq\star$ is
automatically empty). The canonical interpretation of $\varphi$ in \cat{D} yields a subobject of a terminal
object, $\sem{\varphi}\subobject 1$. Choose a representative monomorphism $r:R\mon 1$. Then, independently of
the choices made,
\[ \psi_0^{-1}(U)=\bigcup_{a\in \mathds{S}}\bopen{r:R\rightarrow 1,a}.\]
So $\psi_0$ is continuous. With $\phi_0$ continuous, it is sufficient to check $\phi_1$ on subbasic opens of the
form $U=\bopen{A, a\mapsto b}\subseteq G_{\cat{D}}$. But
\[
\phi_1^{-1}(U)=\left(\begin{array}{c}
-  \\
  \syntob{x:A}{\top}:a \mapsto b   \\
 -
\end{array}\right)
\]
so $\phi_1$ is continuous. Similarly, it is sufficient to check $\psi_1$ on subbasic opens of the form
\[
U=\left(\begin{array}{c}
-  \\
  \syntob{x:A}{\top}:a \mapsto b   \\
 -
\end{array}\right)
\]
but $\psi_1^{-1}(U)=\bopen{A,a \mapsto b}$, so $\psi_1$ is continuous.
\end{proof}
\end{lemma}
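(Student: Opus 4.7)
The plan is to use the fact that $\eta_\cat{D}:\cat{D}\to\synt{C}{\theory_\cat{D}}$ is an equivalence of categories (with quasi-inverse given by the canonical $\theory_\cat{D}$-model in $\cat{D}$). This immediately gives that the underlying functions $\phi_0$ and $\phi_1$ are bijections: composition with an equivalence is an equivalence on functor categories, and invertible natural transformations correspond under this equivalence. I would therefore begin by exhibiting the inverses $\psi_0 : X_{\cat{D}} \to X_{\theory_\cat{D}}$ and $\psi_1 : G_{\cat{D}} \to G_{\theory_\cat{D}}$ explicitly as the maps sending a coherent functor $M:\cat{D}\to\Sets_\kappa$ to its unique lift along $\eta_\cat{D}$, and similarly for natural transformations. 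The entire content of the lemma is then the continuity of the four maps $\phi_0,\psi_0,\phi_1,\psi_1$.

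For $\phi_0$ continuous, I would take a subbasic open $\bopen{\vec f,\vec a}$ of $X_\cat{D}$ corresponding to a span $f_i:A\to B_i$ and translate it into a formula in the internal language of $\cat{D}$: the condition ``there exists $x\in M(A)$ with $M(f_i)(x)=a_i$ for all $i$'' is exactly the extension of $\exists x{:}A\,\bigwedge_i f_i(x)=y_i$ evaluated at $\vec a$, so the preimage is a basic open of $X_{\theory_\cat{D}}$. For $\psi_0$, which is the harder direction, I would take a basic open $\bopen{\syntob{\vec x{:}\vec A}{\phi},\vec a}$ of $X_{\theory_\cat{D}}$, interpret $\phi$ canonically in $\cat{D}$ to obtain a subobject of $A_1\times\cdots\times A_n$, pick a monomorphism $r:R\mon A_1\times\cdots\times A_n$ representing it, and recognize the preimage as the basic open given by the span $\pi_i\circ r:R\to A_i$ — independence from the choice of representative follows because any two choices are isomorphic over the product.

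For the groupoid maps $\phi_1$ and $\psi_1$, I would exploit the fact that the source and target maps are continuous on both sides. Combined with continuity of $\phi_0$ and $\psi_0$, this reduces the problem to checking preimages of subbasic opens of the ``preservation'' form: $\bopen{A,a\mapsto b}$ on the $\cat{D}$ side and $\bopen{B{:}b\mapsto c}$ (for a sort $B$) on the theory side. The translation in both directions is immediate once one remembers that the sorts of $\theory_\cat{D}$ are exactly the objects of $\cat{D}$: $\phi_1^{-1}(\bopen{A,a\mapsto b})$ is the preservation-only basic open $\bopen{\syntob{x:A}{\top}:a\mapsto b}$, and the reverse direction is symmetric.

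The main obstacle I anticipate is the empty context case in the proof that $\psi_0$ is continuous. A sentence $\varphi$ of $\theory_\cat{D}$ is interpreted canonically as a subobject of the terminal object $1$ in $\cat{D}$, and a basic open $\bopen{\syntob{}{\varphi},\star}$ does not directly match the form $\bopen{\vec f,\vec a}$ with a nonempty span. This needs to be handled by recognizing $\psi_0^{-1}$ of such a set as the union $\bigcup_{a\in\Sets_\kappa}\bopen{r:R\to 1,\,a}$ over a chosen representative $r:R\mon 1$; the slight awkwardness of this special case aside, the rest of the argument is a routine back-and-forth translation between the syntactic and categorical presentations of the topology.
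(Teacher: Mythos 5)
Your proposal is correct and follows essentially the same route as the paper: inverses via unique lifting along $\eta_{\cat{D}}$, continuity of $\phi_0$ and $\psi_0$ by translating spans into existential formulas and formulas into representing monomorphisms (with the empty-context union handled exactly as you describe), and reduction of $\phi_1$, $\psi_1$ to preservation-type subbasic opens using continuity of the source and target maps. No gaps to report.
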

\begin{corollary}
Definition \ref{Definition: DC, Coherent topology} yields, for a decidable coherent category \cat{D}, a
topological groupoid $\thry{G}_{\cat{D}}$ such that
\[\thry{G}_{\cat{D}} \cong \thry{G}_{\theory_{\cat{D}}}\]
in the category \alg{Gpd}.
\end{corollary}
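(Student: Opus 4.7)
The plan is to bootstrap the entire claim from the preceding lemma, which has already supplied the continuous bijective comparison maps $\phi_0, \phi_1$ and their continuous inverses $\psi_0, \psi_1$. Since $\thry{G}_{\theory_{\cat{D}}}$ is known to be a topological groupoid by Lemma \ref{Lemma: G is a topological groupoid }, the strategy is to transport that topological-groupoid structure along the homeomorphisms $\phi_i$ and $\psi_i$ to conclude both that $\thry{G}_{\cat{D}}$ is a topological groupoid and that the pair $(\phi_0,\phi_1)$ is an iso in $\alg{Gpd}$.

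First I would record explicitly that the underlying groupoid structure on $G_{\cat{D}}\rightrightarrows X_{\cat{D}}$ (composition, identities, inverses, source, target) is literally computed by composing and inverting natural transformations, and that these structure maps correspond, under the bijections $\psi_0, \psi_1$, to the analogous structure maps on $G_{\theory_{\cat{D}}}\rightrightarrows X_{\theory_{\cat{D}}}$. This is the content of the remark (made just before the Lemma) that $\phi_0,\phi_1$ commute with source, target, composition, and identity insertion; the same is true for the inverse map because a natural transformation is invertible iff its corresponding $\theory_{\cat{D}}$-homomorphism is. Hence the structure maps of $\thry{G}_{\cat{D}}$ factor as compositions of the already-continuous structure maps of $\thry{G}_{\theory_{\cat{D}}}$ with the homeomorphisms $\phi_i, \psi_i$; for example, the composition map of $\thry{G}_{\cat{D}}$ equals $\phi_1\circ c_{\theory_{\cat{D}}}\circ (\psi_1\times_{\psi_0}\psi_1)$, which is continuous. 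The same template works for source, target, identity insertion, and inverse, establishing that $\thry{G}_{\cat{D}}$ is a topological groupoid.

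With that in place, the pair $(\phi_0,\phi_1)$ is a functor of groupoids (by the commutation already noted) whose object- and arrow-components are homeomorphisms, and the pair $(\psi_0,\psi_1)$ is a two-sided inverse functor; so they constitute an isomorphism $\thry{G}_{\cat{D}}\cong\thry{G}_{\theory_{\cat{D}}}$ in $\alg{Gpd}$ (understood as the category of topological groupoids and continuous functors). There is essentially no obstacle here beyond bookkeeping: the nontrivial topological content was absorbed into the preceding lemma, and what remains is just noting that a bijection of groupoids which is a homeomorphism on objects and arrows, and which respects the groupoid operations, is an isomorphism in $\alg{Gpd}$.
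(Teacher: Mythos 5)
Your argument is correct and is essentially what the paper intends: the corollary is stated without separate proof precisely because, once the lemma supplies the homeomorphisms $\phi_0,\phi_1$ (with inverses $\psi_0,\psi_1$) commuting with the groupoid structure maps, both the continuity of the structure maps of $\thry{G}_{\cat{D}}$ and the isomorphism in $\alg{Gpd}$ follow by exactly the transport-of-structure bookkeeping you spell out.
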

We can now state the main representation result of this section (which forms, then, another variation of the representation result of \cite{butz:98b} restricted to decidable coherent categories). We use the notations \Eqsheav{G_{\cat{D}}}{X_{\cat{D}}} and (the shorter) \Sh{\thry{G}_{\cat{D}}} interchangeably.
\begin{Theorem}\label{Theorem: DC, Representation result}
For a decidable coherent category with enough $\kappa$-small models, the topos of coherent sheaves on
\cat{D} is equivalent to the topos of equivariant sheaves on the topological groupoid $\thry{G}_{\cat{D}}$ of
models and isomorphisms equipped with the coherent topology,
\[ \sh{D}\simeq \Sh{\thry{G}_{\cat{D}}}.\]
\begin{proof}
The equivalence $\zeta_{\cat{D}}:\cat{D}\to \synt{C}{\theory_{\cat{D}}}$ yields an equivalence
$\sh{D}\simeq\Sh{\synt{C}{\theory_{\cat{D}}}}$, whence
\[ \sh{D}\simeq \Sh{\synt{C}{\theory_{\cat{D}}}}\simeq \Sh{\thry{G}_{\theory_{\cat{D}}}}\cong \Sh{\thry{G}_{\cat{D}}}\]
by Theorem \ref{Theorem: Representation theorem}.
\end{proof}
\end{Theorem}
\subsection{The Semantical Functor $\Mod$}
\label{Subsection: DC, The Semantical Functor}
%
%
The mapping of a decidable coherent category (with enough $\kappa$-models) \cat{D}  to its $\kappa$-small models
\[
\Mod(\cat{D}) = \homset{\alg{dCoh}}{\cat{D}}{\Sets_{\kappa}},
\]
regarded as a groupoid of natural isomorphisms and equipped with the coherent topology, as in Definition \ref{Definition: DC, DCkappa}, is the object part of a contravariant functor into groupoids. Given a coherent functor
\[
F:\cat{A}\to\cat{D}
\]
between two objects of \alg{dCoh_{\kappa}}, precomposition with $F$,
\[\bfig
\morphism<350,0>[\cat{A}`\cat{D};F] \morphism(350,0)/@{>}@<7pt>/[\cat{D}`\Sets_{\kappa};M]
\morphism(350,0)|b|/@{>}@<-7pt>/[\cat{D}`\Sets_{\kappa};N] \place(550,0)[\Downarrow]
\efig\]
yields a `restriction' morphism of (discrete) groupoids
\begin{equation}\label{Equation: BC, Composition with F gives maps of groupoids}
\bfig
\square|arrb|/>`@{>}@<5pt>`@{>}@<5pt>`>/<500,250>[G_{\cat{D}}`G_{\cat{A}}`X_{\cat{D}}`X_{\cat{A}}; f_1`t`t`f_0]
\square|allb|/>`@{>}@<-5pt>`@{>}@<-5pt>`>/<500,250>[G_{\cat{D}}`G_{\cat{A}}`X_{\cat{D}}`X_{\cat{A}};
f_1`s`s`f_0]
\efig\end{equation}
We verify that $f_0$ and $f_1$ are both continuous. We have
\begin{align*}
&U=\bopen{\pair{g_1:A\rightarrow B_1,\ldots,g_n:A\rightarrow B_n},\pair{a_1,\ldots,a_n}}\subseteq X_{\cat{A}}\\
&\Rightarrow  f_0^{-1}(U)=\bopen{\pair{F(g_1):FA\rightarrow FB_1,\ldots,F(g_n):FA\rightarrow FB_n},\pair{a_1,\ldots,a_n}}
\end{align*}
and for basic open $U=\bopen{C,a\mapsto b}\subseteq G_{\cat{A}}$, we see that
$f_1^{-1}(U)=\bopen{F(C),a\mapsto b}$.
Thus composition with $F$ yields a morphism of topological groupoids,
$f:\thry{G}_{\cat{D}}\to\thry{G}_{\cat{A}},$ and thereby we get a contravariant functor, %
\[
\Mod: \alg{dCoh}^{\mng{op}}_{\kappa}\to \alg{Gpd}.
\]
which we shall refer to as the \emph{semantic} functor. We verify that the semantic functor factors through \alg{wcGpd}. Define the embedding of \cat{D} into \Sh{\thry{G}_{\cat{D}}} as the composition
\[\cat{Y}_{\cat{D}}:\cat{D}\to^{\zeta_{\cat{D}}}\synt{C}{T_{\cat{D}}}\to^{\cat{M}}\sh{\thry{G}_{\synt{C}{T_{\cat{D}}}}}\cong\Sh{\thry{G}_{\cat{D}}}\]
\begin{lemma}\label{Lemma: Restriction morphisms}Let $F:\cat{C}\to\cat{D}$ be a morphism in \alg{dCoh_{\kappa}}, and $\Mod(F)=f:\thry{G}_{\cat{D}}\to\thry{G}_{\cat{C}}$. Then the following commutes
\[\bfig
\square<700,350>[\cat{C}`\cat{D}`\Sh{\thry{G}_{\cat{C}}}`\Sh{\thry{G}_{\cat{D}}};F`\cat{Y}_{\cat{C}}`\cat{Y}_{\cat{D}}`f^*]
\efig\]
where $f^*$ is the induced inverse image functor.
\begin{proof} By (a straightforward, if a bit tedious, check of the) construction.
\end{proof}
\end{lemma}

\begin{lemma}\label{Lemma: Mod factors through weakly coherent groupoids}
The functor $\Mod: \op{\alg{dCoh}_{\kappa}}\to \alg{Gpd}$ factors through the category \alg{wcGpd}.
\begin{proof} The category $\Sh{\cat{D}}\simeq \Sh{\thry{G}_{\cat{D}}}$ is coherent, so weakly coherent. For a coherent functor $F:\cat{C}\to\cat{D}$, Lemma \ref{Lemma: Restriction morphisms} implies that the inverse image functor $f^*:\Sh{\thry{G}_{\cat{C}}}\to\Sh{\thry{G}_{\cat{D}}}$ induced by $\Mod(F)=f: \thry{G}_{\cat{D}}\to\thry{G}_{\cat{C}}$ takes an object $\cat{Y}_{\cat{C}}(C)$ to $\cat{Y}_{\cat{D}}(F(C))$. By Lemma \ref{Lemma: Definables are GUNS}, $\cat{Y}_{\cat{C}}(C)$ is isomorphic to the equivariant sheaf induced by the subobject $(U,N)$ with $U=\bopen{1_C, a}$ for an object $a\in \mathds{S}$, and
\[N=\left(\begin{array}{c}
1_C, a  \\
  C:a \mapsto a   \\
 1_C, a
\end{array}\right)\]
It is now straightforward to see that $F(C)$ is isomorphic the sheaf induced by $(f_0^{-1}(U),f_1^{-1}(N))$ and that the conditions of Definition \ref{Definition: Weakly compact groupoids} (2) are fulfilled.
%
%
\end{proof}
\end{lemma}
%
%
\subsection{The Syntactical Functor $\Form$}
\label{Subsection: DC, The Syntactical Functor}
We construct an adjoint to the semantical functor \Mod\ from the category \alg{wcGpd} of weakly coherent groupoids.
By Theorem \ref{Theorem: DC, Representation result}, $\Mod(\cat{D})$ is a (weakly) coherent groupoid, for any \cat{D} in $\alg{dCoh}_{\kappa}$, and we can recover \cat{D} from $\Mod(\cat{D})$, up to pretopos completion, by taking the compact decidable objects in \Sh{\Mod(\cat{D})}. For arbitrary coherent groupoids, however, this procedure will yield an decidable coherent category, but not necessarily one in $\alg{dCoh}_{\kappa}$, i.e.\ not necessarily with enough $\kappa$-small models. However, one can use the groupoid, $\Sets^*_{\kappa}$ of smaller than $\kappa$ sets and bijections to classify a suitable collection of objects, as we now proceed to describe.
\subsubsection{The Decidable Object Classifier}
\label{Subsubsection: DC, The Decidable Object Classifier}
\begin{definition}\label{Definition: The groupoid of sets}
The topological groupoid \thry{S} consists of $\kappa$-small sets with bijections between
them, equipped with topology as follows. The topology on the set of objects, $S_0$, is generated by the empty
set and basic opens of the form
\[
\bopen{a_1,\ldots,a_n}:=\cterm{A\in \Sets_{\kappa}}{a_1,\ldots,a_n\in A}
\]
for $a_1,\ldots,a_n\in\mathds{S}$, while the topology on the set, $S_1$ of bijections between $\kappa$-small sets is the coarsest
topology such that the source and target maps $s,t:S_1\rightrightarrows S_0$ are both continuous, and containing
all sets of the form
\[
\bopen{a\mapsto b}:=\cterm{f:A\to^{\cong} B\ \textnormal{in}\ \Sets_{\kappa}}{a\in A\wedge f(a)=b}
\]
for $a,b\in\mathds{S}$.
\end{definition}
We recognize $\thry{S}$ as the groupoid of models and isomorphisms for the decidable coherent theory, $\theory_{\neq}$, of equality and inequality (with the obvious signature and axioms). We state this for reference.
\begin{lemma}
There is an isomorphism $\thry{S}\cong \thry{G}_{\theory_{\neq}}$ in \alg{Gpd}.
\begin{proof} Compare Definitions \ref{Definition> Logical topology} and \ref{Definition: The groupoid of sets}.
\end{proof}
\end{lemma}
The topos \Sh{\thry{S}} of equivariant sheaves on \thry{S}, therefore, classifies decidable objects, as
$\Sh{\thry{S}}\simeq \Sh{\thry{G}_{\theory_{\neq}}}\simeq \Sh{\synt{C}{T_{\neq}}}$, where the last equivalence is by Theorem \ref{Theorem: Representation theorem}.

\begin{corollary}\label{Corollary: Sh(S) coh}
The groupoid $\thry{S}$ of small sets is weakly coherent.
\end{corollary}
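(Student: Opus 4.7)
The plan is to chain together the previous lemma and Theorem \ref{Proposition: Multisort classtop eqsh equivalence} to identify $\Sh{\thry{S}}$ with the classifying topos of the coherent theory $\theory_{\neq}$, and then observe that such a classifying topos is coherent essentially by construction, since it carries $\synt{C}{\theory_{\neq}}$ equipped with the coherent coverage as a coherent site of definition.

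Concretely, I would first invoke the preceding lemma to rewrite $\Sh{\thry{S}} \simeq \Sh{\thry{G}_{\theory_{\neq}}}$, so that it suffices to show that the right-hand side is a coherent topos. Next, I would apply Theorem \ref{Proposition: Multisort classtop eqsh equivalence} to the decidable coherent theory $\theory_{\neq}$, which yields
\[
\Sh{\thry{G}_{\theory_{\neq}}} \simeq \Sh{\synt{C}{\theory_{\neq}}}.
\]
The hypothesis of that theorem requires that $X_{\theory_{\neq}}$ be a saturated set of $\kappa$-small models, so I would choose $\kappa$ large enough for this to hold; since $\theory_{\neq}$ has no nontrivial function or relation symbols beyond equality and inequality, any regular $\kappa$ larger than the cardinality of the deductive closure of $\theory_{\neq}$ suffices (so that Deligne's theorem applies, as recorded in the discussion preceding Proposition \ref{Proposition: Stone representation for coherent theories}).

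Finally, since $\synt{C}{\theory_{\neq}}$ is a small coherent category and the coherent coverage is a coherent coverage on it, the topos $\Sh{\synt{C}{\theory_{\neq}}}$ is coherent by definition (it has a coherent site of definition, cf.\ \cite[D3.3]{elephant1}). Composing the equivalences gives that $\Sh{\thry{S}}$ is coherent, which is what it means for $\thry{S}$ to be a coherent groupoid in the sense of the definition of $\alg{CohGpd}$. The only mild subtlety is the saturation step, but this is automatic for any sufficiently large $\kappa$; no further work is needed.
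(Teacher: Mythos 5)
Your proof is correct and follows essentially the same route as the paper, which obtains the corollary directly from the chain of equivalences $\Sh{\thry{S}}\simeq\Sh{\thry{G}_{\theory_{\neq}}}\simeq\Sh{\synt{C}{T_{\neq}}}$ (the preceding lemma plus Theorem \ref{Proposition: Multisort classtop eqsh equivalence}), the last topos being coherent because $\synt{C}{T_{\neq}}$ with the coherent coverage is a coherent site. Your remark on saturation is the right observation and is indeed automatic here, since $\theory_{\neq}$ is small so the fixed regular cardinal $\kappa$ already satisfies the hypothesis.
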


\begin{corollary}\label{Corollary: DC, decidable object class equivalences}
There is an equivalence of toposes,
\[\Sets^{\mng{Fin}_i}\simeq \Sh{\synt{C}{T_{\neq}}}\simeq \Sh{\thry{S}}\]
where $\mng{Fin}_i$ is the category of finite sets and injections.
\begin{proof}
$\Sets^{\mng{Fin}_i}\simeq \Sh{\synt{C}{T_{\neq}}}$ by \cite[VIII, Exc.7--9]{maclane:92}.
\end{proof}
\end{corollary}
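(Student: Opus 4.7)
The plan is to prove the two equivalences separately and then compose. The second equivalence $\Sh{\synt{C}{T_{\neq}}}\simeq \Sh{\thry{S}}$ is nearly immediate: the preceding lemma provides an isomorphism $\thry{S}\cong \thry{G}_{\theory_{\neq}}$ in \alg{Gpd}, so $\Sh{\thry{S}}\simeq \Sh{\thry{G}_{\theory_{\neq}}}$, and Theorem \ref{Proposition: Multisort classtop eqsh equivalence} then gives $\Sh{\thry{G}_{\theory_{\neq}}}\simeq \Sh{\synt{C}{\theory_{\neq}}}$. For the theorem to apply I need to verify that $\theory_{\neq}$ admits a saturated set of $\kappa$-small models; this is routine since $\theory_{\neq}$ has cardinality below $\kappa$ and every consistent coherent formula reduces (modulo the axioms of equality and inequality) to a Boolean combination of (in)equalities in finitely many variables, which is realized in some finite (hence $\kappa$-small) model. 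Joint cover-reflection then follows by the Deligne-style argument used to motivate the choice of $X_{\theory}$ in Section \ref{Subsection: Stone representation}.

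For the first equivalence $\Sets^{\mng{Fin}_i}\simeq \Sh{\synt{C}{T_{\neq}}}$, my approach is to identify both sides with the classifying topos for ``a decidable object''. The right-hand side is such a classifying topos by construction, since a $\theory_{\neq}$-model in any topos $\cat{E}$ is exactly a decidable object of $\cat{E}$ (an object whose diagonal is complemented), and so for any Grothendieck topos $\cat{E}$ we have $\homset{\alg{Top}}{\cat{E}}{\Sh{\synt{C}{\theory_{\neq}}}}\simeq \mathrm{Dec}(\cat{E})$. On the left, by the universal property of the presheaf topos on $\mng{Fin}_i$, geometric morphisms $\cat{E}\to\Sets^{\mng{Fin}_i}$ correspond to flat functors $\mng{Fin}_i\to\cat{E}$. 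I would then match these two descriptions by sending a decidable $A\in\cat{E}$ to the functor $F_A:\mng{Fin}_i\to\cat{E}$ whose value at a finite set $S$ of cardinality $n$ is the subobject of $A^n$ consisting of tuples with pairwise distinct entries (using decidability to form this complement), with the action on injections given by reindexing; conversely, from a flat $F$ one recovers a decidable object as $F(1)$, with decidability inferred from the monomorphism $F(2)\hookrightarrow F(1)\times F(1)$ and its complement described by $F$ on the injection $\{*\}\hookrightarrow\{*,*'\}$.

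The main obstacle is the last correspondence, which is the content of the cited \cite[VIII, Exc.~7--9]{maclane:92}. The nontrivial direction is showing that an arbitrary flat $F:\mng{Fin}_i\to\cat{E}$ must have this ``distinct-tuples'' shape: filteredness of $\mng{Fin}_i$ together with the existence of the coprojection $\{*\}\hookrightarrow\{*,*'\}$ forces $F(n)$ to embed into $F(1)^n$ in such a way that the image is stable under permutations and complementary to the pairwise diagonals, and this ``disjointness'' condition is exactly what decidability of $F(1)$ encodes. Once this identification of geometric morphisms with decidable objects is shown to be natural in $\cat{E}$, the equivalence $\Sets^{\mng{Fin}_i}\simeq \Sh{\synt{C}{\theory_{\neq}}}$ follows from the $2$-Yoneda lemma applied to the respective classifying properties.
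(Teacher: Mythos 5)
Your proof is correct and takes essentially the paper's route: the second equivalence comes from the isomorphism $\thry{S}\cong\thry{G}_{\theory_{\neq}}$ together with Theorem \ref{Proposition: Multisort classtop eqsh equivalence} (saturation of $\theory_{\neq}$ being automatic since it has cardinality $<\kappa$), while your argument for $\Sets^{\mng{Fin}_i}\simeq\Sh{\synt{C}{T_{\neq}}}$---both toposes classify decidable objects, matched via Diaconescu's theorem and the injective-tuples functor---is exactly the content of the cited exercises \cite[VIII, Exc.7--9]{maclane:92}, which the paper simply invokes. One small correction: since $\Sets^{\mng{Fin}_i}$ is the presheaf topos on $\op{\mng{Fin}_i}$, the flat functors in question go $\op{\mng{Fin}_i}\to\cat{E}$, which is what your contravariant ``reindexing'' description in fact defines.
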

%
%
%
%
\begin{definition}\label{Stipulation: U} We fix the generic decidable object, $\cat{U}$, in \Sh{\thry{S}} to be the set $U\rightarrow S_0$ over $S_0$ such that the fiber over a set $A\in S_0$ is the set $A$ (i.e.\ $U = \coprod_{A\in S_0}A$), and the action by the set $S_1$ of isomorphisms is just applying those isomorphisms to the fibers.  Thus, forgetting the topology, \cat{U} is simply the inclusion $\thry{S}\embedd\Sets$. The topology on $U$ is the coarsest such that the projection $U\rightarrow S_0$ is continuous and such that for each $a\in \mathds{S}$ the image of the section $s_a:\bopen{a}\rightarrow U$ defined by $s_a(A)=a$ is an open set.
%
\end{definition}
\begin{remark}\label{Remark: U Moerdijk descripiton}
Comparing Definitions \ref{Stipulation: U} and \ref{Definition: Logical topology on sheaves}, we see that \cat{U} can also be described as the definable sheaf $\pair{\csem{x}{\top}_{X_{\theory_{\neq}}}\rightarrow X_{\theory_{\neq}}\cong S_0,\theta_{\syntob{x}{\top}}}$.

\end{remark}
\subsubsection{Formal Sheaves}
\label{Subsubsection: Morphisms into S}
We use the groupoid $\thry{S}$ of (small) sets to recover an object in $\alg{dCoh}_\kappa$ from a coherent groupoid by considering the set $\homset{\alg{wcGpd}}{\thry{G}}{\thry{S}}$ of morphisms into \thry{S}. (Consider the analogy to the propositional case, where the algebra of clopen sets of a Stone space is recovered by homming into the discrete space $2$.)  First, however, a note on notation and bookkeeping: because we shall be concerned with functors into $\Sets_{\kappa}$---a subcategory of \Sets\ which is not closed under isomorphisms---we fix certain choices \emph{on the nose}, instead of working up to isomorphism or assuming a canonical choice as arbitrarily given. Without going into the (tedious) details of the underlying book-keeping, the upshot is that we allow ourselves to treat (the underlying set over $G_0$ and action of) an equivariant sheaf over a groupoid, $\thry{G}$ as a functor $\thry{G}\to\Sets$ in an intuitive way. In particular, we refer to the definable set $\csem{\vec{x}}{\phi}^{\alg{M}}$ as the fiber of $\sox{\vec{x}}{\phi}\rightarrow X_{\theory}$ over $\alg{M}$, although that is not strictly speaking the fiber (strictly speaking the fiber is, according to our definition, the set  $\{\alg{M}\}\times\csem{\vec{x}}{\phi}^{\alg{M}}$). Moreover, we chose the induced inverse image functor $f^*:\Sh{\thry{H}}\to\Sh{\thry{G}}$ induced by a morphism, $f:\thry{G}\to\thry{H}$, of topological groupoids so that, for $A\in \Sh{\thry{H}}$ the fiber over $x\in G_0$ of $f^*(A)$ is the same set as the fiber of $A$ over $f_0(x)\in H_0$. For example, and in particular, any morphism of topological groupoids $f:\thry{G}\to\thry{S}$ induces a geometric morphism $f:\Sh{\thry{G}}\to\Sh{\thry{S}}$ the inverse image part of which takes the generic decidable object \cat{U} of \ref{Stipulation: U} to an (equivariant) sheaf over \thry{G},
\[
\bfig
\square(0,0)|allb|<600,400>[ A`U=\csem{x}{\top}_{X_{\theory_{\neq}}}`G_{0}`S_{0};```f_0]
%
%
\place(100,300,)[\pbangle]
\efig
\]
such that the fiber $A_x$ over $x\in G_0$ is the same set as the fiber of $U$ over $f_0(x)$, which is the set $f_0(x)\in S_0=\Sets_{\kappa}$. We hope that this is sufficiently intuitive so that we may hide the underlying book-keeping needed to make sense of it. With this in mind, then, we make the following stipulation.
\begin{definition}\label{Definition: Form(G)}
For a weakly coherent groupoid \thry{G}, let $\Form(\thry{G})\hookrightarrow\Sh{\thry{G}}$ be the full subcategory consisting of objects of the form $f^*(\cat{U})$ for all $f:\thry{G}\to \thry{S}$ in \alg{wcGpd}. Such objects will be called \emph{formal sheaves}.
\end{definition}
%
%
Observe that:
\begin{lemma}\label{Lemma: enough to check coh on U}
For a weakly coherent groupoid \thry{G}, a morphism $f:\thry{G}\to \thry{S}$  of topological groupoids is in \alg{wcGpd} if (and only if) the classified object $f^*(\cat{U})\in \Sh{\thry{G}}$ is compact.
\end{lemma}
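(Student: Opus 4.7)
The forward direction is immediate: $\cat{U}$ corresponds via the equivalence $\Sh{\thry{S}}\simeq\Sh{\synt{C}{T_{\neq}}}$ to the representable on the unique sort, hence is a coherent object of $\Sh{\thry{S}}$ (cf.\ Remark \ref{Remark: Compact and coherent object}(ii)) and so compact; if $f\in\alg{CohGpd}$, then $f^*(\cat{U})$ is compact by definition. The work lies in the converse.

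Assume $f^*(\cat{U})$ is compact. I would first upgrade this to coherence. Since $\cat{U}$ is decidable in $\Sh{\thry{S}}$ and the inverse image $f^*$ preserves finite limits together with the coproduct decomposition $\cat{U}\times\cat{U}\cong\Delta+\Delta^c$ that witnesses decidability, $f^*(\cat{U})$ is decidable in $\Sh{\thry{G}}$. Combined with the compactness hypothesis, $f^*(\cat{U})$ is coherent by Remark \ref{Remark: Compact and coherent object}(iii).

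The key step is then to propagate coherence along $f^*$ from $\cat{U}$ to every coherent object of $\Sh{\thry{S}}$. By Corollary \ref{Corollary: DC, decidable object class equivalences} and Remark \ref{Remark: Compact and coherent object}(ii), the coherent objects of $\Sh{\thry{S}}$ form the pretopos completion of $\synt{C}{T_{\neq}}$. Because $\synt{C}{T_{\neq}}$ is generated as a decidable coherent category by its single sort object (which corresponds to $\cat{U}$), and because pretopos completion only adds finite disjoint coproducts and quotients by definable equivalence relations, the pretopos of coherent objects of $\Sh{\thry{S}}$ is generated from $\cat{U}$ under pretopos operations (finite limits, images, finite joins of subobjects, finite coproducts, effective quotients). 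All of these are preserved by $f^*$, and in the coherent topos $\Sh{\thry{G}}$ the coherent objects are closed under pretopos operations (Remark \ref{Remark: Compact and coherent object}(ii)); applying them to the coherent object $f^*(\cat{U})$ therefore yields coherent, and in particular compact, objects. Hence $f^*$ sends every coherent object of $\Sh{\thry{S}}$ to a compact object of $\Sh{\thry{G}}$.

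Finally, to handle arbitrary compact objects of $\Sh{\thry{S}}$, note that any such object $A$ is covered by a finite coproduct $C$ of generators from a coherent site; in particular $C$ is coherent. Since $f^*$ preserves epimorphisms, $f^*(C)\twoheadrightarrow f^*(A)$; by the previous paragraph $f^*(C)$ is compact, and quotients of compact objects are compact, so $f^*(A)$ is compact. The main obstacle is the claim that the coherent objects of $\Sh{\thry{S}}$ are generated by $\cat{U}$ alone under pretopos operations; this relies on the very simple shape of $\synt{C}{T_{\neq}}$---every object is a subobject of some $\cat{U}^n$ cut out by a coherent formula in equality and inequality---together with standard properties of pretopos completions.
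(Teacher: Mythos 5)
Your proof is correct and follows essentially the same route as the paper: upgrade $f^*(\cat{U})$ from compact to coherent via decidability, propagate coherence to the objects coming from the site $\synt{C}{T_{\neq}}$ using that $f^*$ preserves the coherent/pretopos structure under which they are built from $\cat{U}$, and then handle an arbitrary compact object by covering it with a finite coproduct of site objects. The only (harmless) difference is that you route through the entire pretopos of coherent objects of $\Sh{\thry{S}}$, where the paper only needs the image of $\synt{C}{T_{\neq}}$ itself before the finite-cover step.
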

\begin{proof}The objects in the image of the embedding $\synt{C}{T_{\neq}}\embedd \Sh{\thry{S}}$ (which sends \syntob{x}{\top} to $\cat{U}$) is a generating set $Y$ of compact decidable objects.  If $f^*(\cat{U})$ is compact, and therefore compact decidable, then, by Lemma \ref{Lemma: Compact decidables in weakly compact topos}, so is $f^*(C)$ for any $C\in Y$. Since $Y$ is a generating set, $f^*$ preserves compact objects.

\end{proof}
%
%
%
The formal sheaves on a weakly coherent groupoid can be characterized directly:
\begin{lemma}\label{Lemma: DC, Theta(G)}
An equivariant sheaf $\cat{A}= \pair{A\rightarrow G_0, \alpha}$ on a weakly coherent groupoid \thry{G} is formal just in case:
\begin{enumerate}[(i)]
\item $\cat{A}$ is compact decidable;
\item
each fibre $A_x$ for $x\in G_0$ is an element of $\Sets_{\kappa}$;
\item
for each $a\in \mathds{S}$, the set $\bopen{\cat{A},a}=\cterm{x\in G_0}{a\in A_x}\subseteq G_0$ is open, and
the function $s_{\cat{A},a}:\cterm{x\in G_0}{a\in A_x}\rightarrow A$ defined by $s(x)=a$ is a continuous
section; and
\item
for any $a,b\in \mathds{S}$, the set
\[\bopen{\cat{A},a\mapsto b}=\cterm{g:x\rightarrow y}{a\in A_{x}\wedge \alpha(g,a)=b} \subseteq G_1\]
is open.
\end{enumerate}
\begin{proof}
Let a morphism
$f:\thry{G}\to\thry{S}$ in \alg{wcGpd} be given, inducing a geometric morphism $f:\Sh{\thry{G}}\to
\Sh{\thry{S}}$ such that the inverse image preserves compact objects. Then $f^*(\cat{U})$ is a compact decidable object with fibers in $\Sets_{\kappa}$; the
set $\bopen{f^*(\cat{U}),a}=f_0^{-1}(\bopen{a})\subseteq G_0$ is open; the continuous section $\bopen{a}\rightarrow U$ defined by $M\mapsto a$ pulls back along $f_0$ to yield the required section; and the set
$\bopen{f^*(\cat{U}),a\mapsto b}=f_1^{-1}(\bopen{a\mapsto b})\subseteq G_1$ is open. So $f^*(\cat{U})$ satisfies conditions (i)--(iv).

Conversely, suppose that $\cat{A}=\pair{A\rightarrow G_0, \alpha}$ satisfies conditions (i)--(iv). Define the function $f_0:G_0\rightarrow S_0$ by $x\mapsto A_x$, which is possible since
$A_x\in\Sets_{\kappa}$ by (ii). Then for a subbasic open set $\bopen{a}\subseteq S_0$, we have
\[ f_0^{-1}(\bopen{a})=\cterm{x\in G_0}{a\in A_x}=\bopen{\cat{A},a} \]
so $f_0$ is continuous by (iii). Next, define $f_1:G_1\rightarrow S_1$ by
\[g:x\rightarrow y\ \ \mapsto\ \
\alpha(g,-):A_x\rightarrow A_y.\]
Then for a subbasic open $\bopen{a\mapsto b}\subseteq S_1$, we have
\[ f_1^{-1}(\bopen{a\mapsto b})=\cterm{g\in G_1}{a\in A_{s(g)}\wedge \alpha(g,a)=b}=\bopen{\cat{A},a\mapsto b} \]
so $f_1$ is continuous by (iv). It remains to show that $f^*(\cat{U})=\cat{A}$. First, we must verify that what is a
pullback of sets:
\[\bfig
\square[A`U`G_0`S_0;```f_0]\place(100,400)[\pbangle]
\efig\]
is also a pullback of spaces. Let $a\in A$ with $V\subseteq A$ an open neighborhood. We must find an open box
around $a$ contained in $V$. Intersect $V$ with the image of the section $s_{\cat{A},a}(\bopen{\cat{A},a})$ to
obtain an open set $V'$ containing $a$ and homeomorphic to a subset $W\subseteq G_0$. Then we can write $V'$ as
the box $W\times_{S_0}\bopen{\syntob{x,y}{x=y},a}$ for the open set $\bopen{\syntob{x,y}{x=y},a}\subseteq U$. Conversely, let a basic open  $\bopen{\syntob{x,\vec{y}}{\phi},\vec{b}} \subseteq U$ be given, for $\phi$ a formula of $\theory_{\neq}$. We must show that it pulls back to an open subset of $A$. Let $a\in A_z$ be given and assume that $a$ (in the fiber over $f_0(z))$ is in
$\bopen{\syntob{x,\vec{y}}{\phi},\vec{b}}$.  Now, since \cat{A} is decidable, there is a canonical interpretation of
\syntob{x,\vec{y}}{\phi} in \Sh{\thry{G}} obtained by interpreting \cat{A} as the single sort, and using the
canonical coherent structure of \Sh{\thry{G}}. Thereby, we obtain an object
\[
\cat{B}:=\csem{x,\vec{y}}{\phi}^{\cat{A}}\embedd \cat{A}\times\ldots\times\cat{A}\to^{\pi_1}\cat{A}\]
in with an underlying open subset $B\subseteq A\times_{G_0}\ldots
\times_{G_0} A\to^{\pi_1}A$. One can verify that \cat{B} satisfies conditions (i)--(iv), se the proof of Lemma \ref{Lemma: Theta G is a coherent category} below. Let $W\subseteq B$ be the  image of the continuous section
$s_{\cat{B},a,\vec{b}}(\bopen{\cat{B},a,\vec{b}})$.  Then the pullback of $\bopen{\syntob{x,\vec{y}}{\phi},\vec{b}}$ along $f_0$ is the image of $W$ along the projection
$\pi_1:\cat{A}\times\ldots\times\cat{A}\to\cat{A}$, which is an open subset of $A$.
%
%
%
%
%
%
%
%
\end{proof}
\end{lemma}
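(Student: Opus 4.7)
My plan is to prove the two directions separately, using Lemma \ref{Lemma: enough to check coh on U} as a shortcut to avoid verifying the full coherent-preservation property from scratch.

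For the forward direction, I start with a morphism $f : \thry{G} \to \thry{S}$ in $\alg{CohGpd}$ and analyze the pullback $f^*(\cat{U})$. Compactness follows because $f^*$ preserves compact objects, and $\cat{U}$ is compact (being a definable object in the coherent topos $\Sh{\thry{S}}$, e.g.\ via Corollary \ref{Corollary: DC, decidable object class equivalences}); decidability is preserved by any coherent inverse image. Condition (ii) is immediate from the bookkeeping convention in the paragraph preceding Definition \ref{Definition: Form(G)}, which arranges that the fibre of $f^*(\cat{U})$ over $x \in G_0$ literally equals $f_0(x) \in S_0 = \Sets_\kappa$. Condition (iii) follows by pulling back the subbasic open $\bopen{a} \subseteq S_0$ and the canonical section $s_a : \bopen{a} \to U$ (coming from Remark \ref{Remark: U Moerdijk descripiton}) along $f_0$. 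Condition (iv) follows similarly by pulling back the subbasic open $\bopen{a \mapsto b} \subseteq S_1$ along $f_1$.

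For the reverse direction, I start with $\cat{A}$ satisfying (i)--(iv) and define
\[
f_0 : G_0 \to S_0, \quad x \mapsto A_x, \qquad f_1 : G_1 \to S_1, \quad g \mapsto \alpha(g,-).
\]
Condition (ii) makes $f_0$ well-defined; conditions (iii) and (iv) make $f_0$ and $f_1$ continuous on subbasic opens, exactly as in the statement. It is routine that $f$ commutes with source, target, identity, composition, and inverse, using the groupoid axioms for $\alpha$. Lemma \ref{Lemma: enough to check coh on U} then tells me that to show $f \in \alg{CohGpd}$ it suffices to verify that $f^*(\cat{U})$ is compact; but the underlying set and action of $f^*(\cat{U})$ agree fibrewise with $\cat{A}$ by construction, so compactness of $\cat{A}$ in (i) does the job, once I have checked that $f^*(\cat{U}) \cong \cat{A}$ in $\Sh{\thry{G}}$.

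The main obstacle is establishing this last isomorphism at the level of spaces (not merely sets). As sets over $G_0$, $f^*(\cat{U})$ is the pullback $G_0 \times_{S_0} U$, with matching fibres by (ii). I must show that the topology transferred from $U$ via this pullback coincides with the given topology on $A$. One inclusion is easy: a point $a \in A_x$ has an open neighbourhood $s_{\cat{A},a}(\bopen{\cat{A},a})$ by (iii), which corresponds under the pullback to the open box $\bopen{\cat{A},a} \times_{S_0} \bopen{\syntob{x,y}{x=y},a}$, so given neighbourhoods in $A$ restrict to boxes from the pullback. The more delicate inclusion is to show that for any basic open $\bopen{\syntob{x,\vec{y}}{\phi},\vec{b}} \subseteq U$ coming from a formula $\phi$ of $\theory_{\neq}$, its pullback along $f_0$ is open in $A$. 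I will handle this by induction on the formula $\phi$: since $\cat{A}$ is decidable, I can interpret $\phi(x, \vec{y})$ in $\Sh{\thry{G}}$ with $\cat{A}$ as the sole sort, obtaining a subobject $\cat{B} = \csem{x,\vec{y}}{\phi}^{\cat{A}} \hookrightarrow \cat{A} \times \cdots \times \cat{A}$. A short verification (anticipating Lemma \ref{Lemma: Theta G is a coherent category}) shows that $\cat{B}$ itself satisfies (i)--(iv), so by (iii) applied to $\cat{B}$ I get the continuous section $s_{\cat{B}, a, \vec{b}}$, whose image projects down to the desired open subset of $A$. Assembling these opens as $a$ varies gives that the pullback of $\bopen{\syntob{x,\vec{y}}{\phi},\vec{b}}$ is open, completing the proof.
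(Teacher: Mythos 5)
Your proposal is correct and follows essentially the same route as the paper's proof: the forward direction by pulling back $\cat{U}$, its sections, and the subbasic opens along $f_0$ and $f_1$, and the converse by defining $f_0 : x \mapsto A_x$, $f_1 : g \mapsto \alpha(g,-)$, and then checking that the set-level pullback is a pullback of spaces via the two inclusions you describe (sections from (iii) for one direction, and interpreting $\theory_{\neq}$-formulas in $\Sh{\thry{G}}$ with $\cat{A}$ as the single sort, anticipating Lemma \ref{Lemma: Theta G is a coherent category}, for the other). The only cosmetic differences are that you explicitly invoke Lemma \ref{Lemma: enough to check coh on U} to conclude $f \in \alg{CohGpd}$ (a step the paper leaves implicit) and that your announced ``induction on $\phi$'' is not actually needed, since the direct interpretation argument you then give is exactly the paper's.
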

The logically definable objects in the category of equivariant sheaves on the groupoid of models and isomorphisms of a
theory are readily seen to be a (guiding) example of objects satisfying conditions (i)--(iv) of Lemma \ref{Lemma:
DC, Theta(G)}, so we have:
\begin{lemma}\label{Lemma: DC, Definables are firm}
For any $\synt{C}{T}$ in \alg{dCoh_{\kappa}}, the functor $\cat{M}$ factors through $\Form(\thry{G}_{\theory})$,
\[\cat{M}:\synt{C}{T}\to\Form(\thry{G}_{\theory})\embedd \Sh{\thry{G}_{\theory}}\]
\end{lemma}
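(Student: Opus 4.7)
The plan is to invoke the characterization of formal sheaves given by Lemma \ref{Lemma: DC, Theta(G)} and verify each of its four conditions for the equivariant sheaf $\cat{M}^{\dag}(\syntob{\vec{x}}{\phi}) = \pair{\sox{\vec{x}}{\phi} \to X_{\theory}, \theta}$ associated to an arbitrary object $\syntob{\vec{x}}{\phi}$ of $\synt{C}{T}$. Each condition should fall out of material already developed in Section \ref{Section: Reprentation Theorem}, so no substantially new work is required; the task is mostly one of checkoff and bookkeeping.

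For (i), compact decidability was essentially observed inside the proof of Theorem \ref{Proposition: Multisort classtop eqsh equivalence}: decidability of $\cat{M}^{\dag}(\syntob{\vec{x}}{\phi})$ is inherited from $\synt{C}{T}$ because $\cat{M}^{\dag}$ is coherent (Lemma \ref{Lemma: m dager surjective}) and $\theory$ is decidable, while compactness follows from Corollary \ref{Corollary: Definables the only stable opens} together with cover-reflection of $\cat{M}^{\dag}$. For (ii), the fibre of $\sox{\vec{x}}{\phi} \to X_{\theory}$ over a model $\alg{M}$ is, under the bookkeeping convention introduced just before Definition \ref{Definition: Form(G)}, the definable set $\csem{\vec{x}}{\phi}^{\alg{M}}$, which lies in $\Sets_{\kappa}$ because $\alg{M}$ is $\kappa$-small.

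For (iii), unwinding definitions gives $\bopen{\cat{M}^{\dag}(\syntob{\vec{x}}{\phi}), \vec{a}} = \bopen{\syntob{\vec{x}}{\phi}, \vec{a}}$, a basic open of the logical topology on $X_{\theory}$; and the candidate section $\alg{M}\mapsto\pair{\alg{M}, \vec{a}}$ is continuous by exactly the computation already carried out in Lemma \ref{Lemma: Basic opens of X generate representables}. For (iv), $\bopen{\cat{M}^{\dag}(\syntob{\vec{x}}{\phi}), \vec{a}\mapsto \vec{b}}$ is the intersection of the sub-basic open $s^{-1}(\bopen{\syntob{\vec{x}:\vec{A}}{\phi}, \vec{a}})$ with the sub-basic opens $\bopen{A_i : a_i \mapsto b_i}$ of Definition \ref{Definition: Bopen of G}, hence open in $G_{\theory}$.

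The only real obstacle is the notational gap between the element-level formulation of Lemma \ref{Lemma: DC, Theta(G)}, in which a single $a \in \Sets_{\kappa}$ ranges over fibre points, and the tuple-valued fibres of $\sox{\vec{x}}{\phi}$, whose points are tuples $\vec{a}$. One should read each $a$ of the lemma as an arbitrary fibre element — in our case a tuple of the same length as $\vec{x}$ — and then the verification of (iii) and (iv) proceeds uniformly in the length of $\vec{x}$ with no new ideas, drawing only on the basic open notation from Definition \ref{Definition: Logical topology on sheaves} and Definition \ref{Definition: Bopen of G}.
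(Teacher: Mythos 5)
Your proposal is correct and follows essentially the same route as the paper: the paper offers no separate argument beyond remarking that the definable objects are ``readily seen'' to satisfy conditions (i)--(iv) of Lemma \ref{Lemma: DC, Theta(G)}, which is exactly the checklist you carry out, reusing Lemma \ref{Lemma: m dager surjective}, Corollary \ref{Corollary: Definables the only stable opens}, and the basic-open computations from Section \ref{Section: Reprentation Theorem}. Your handling of the tuple-versus-element bookkeeping (with non-tuple $a$ giving empty, hence open, sets) is exactly the intended reading.
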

%
Next, we show that the formal sheaves on a weakly coherent groupoid form a decidable coherent category.
\begin{lemma}\label{Lemma: Theta G is a coherent category}
Let \thry{G} be an object of \alg{wcGpd}. Then $\Form(\thry{G})\embedd \Sh{\thry{G}}$ is a (positive) decidable
coherent category.
\begin{proof}
We verify that $\Form(\thry{G})$ is closed under the relevant operations using the characterization of Lemma \ref{Lemma: DC, Theta(G)}. By Lemma \ref{Lemma: Compact decidables in weakly compact topos}, it suffices to show that conditions (ii)--(iv) of Lemma \ref{Lemma: DC, Theta(G)} are closed under finite limits, images, and finite coproducts.

\textbf{Initial object.} Immediate.

\textbf{Terminal object.} The canonical terminal object, write \pair{X'\rightarrow X, \alpha}, is such that the
fiber over any $x\in G_0$ is $\{\star\}\in \Sets_{\kappa}$, whence the set \cterm{x\in G_0}{a\in X_x'} is $X$ if
$a=\star$ and empty otherwise. Similarly, the set $\cterm{g:x\rightarrow y}{a\in
X_x'\wedge\alpha(g,a)=b}\subseteq G_1$ is $G_1$ if $a=\star =b$ and empty otherwise.

\textbf{Finite products.} We do the binary product $\cat{A}\times\cat{B}$. The fiber over $x\in G_0$ is the
product $A_x\times B_x$, and so it is in $\Sets_{\kappa}$. Let a set $\bopen{\cat{A}\times\cat{B},c}$ be given. We
may assume that $c$ is a pair, $c=\pair{a,b}$, or $\bopen{\cat{A}\times\cat{B},c}$ is empty. Then,
\[
\bopen{\cat{A}\times\cat{B},\pair{a,b}}=\bopen{\cat{A},a}\cap \bopen{\cat{B},b}
\]
and the function $s_{\cat{A}\times\cat{B},\pair{a,b}}:\bopen{\cat{A}\times\cat{B},\pair{a,b}}\rightarrow
A\times_{G_0}B$ is continuous by the following commutative diagram:
\[\bfig
\morphism<0,-700>[\bopen{\cat{A},a}`A;s_{\cat{A},a}]
\morphism(1800,0)<0,-700>[\bopen{\cat{B},b}`B;s_{\cat{B},b}]
%
\morphism(0,0)/<-^{)}/<900,0>[\bopen{\cat{A},a}`\bopen{\cat{A}\times\cat{B},\pair{a,b}};\supseteq]
\morphism(900,0)<0,-700>[\bopen{\cat{A}\times\cat{B},\pair{a,b}}`A\times_{G_0} B;s_{\cat{A}\times \cat{B},
\pair{a,b}}]
%
\morphism(900,0)/^{ (}->/<900,0>[\bopen{\cat{A}\times\cat{B},\pair{a,b}}`\bopen{\cat{B},b};\subseteq]
\morphism(0,-700)/<-/<900,0>[A`A\times_{G_0} B;\pi_1]
\morphism(900,-700)<900,0>[A\times_{G_0} B`B;\pi_2]
\efig\]
Similarly, the set $\bopen{\cat{A}\times B,c\mapsto d}$ is either empty or of the form \[\bopen{\cat{A}\times
B,\pair{a,b}\mapsto \pair{a',b'}}\] in which case
\[\bopen{\cat{A}\times
B,\pair{a,b}\mapsto \pair{a',b'}}= \bopen{\cat{A},a\mapsto a'}\cap \bopen{\cat{B},b\mapsto b'}.\]

\textbf{Equalizers and Images.} Let \cat{A} be a subobject of $\cat{B}=\pair{\pi_1:B\rightarrow G_0,\beta}$,
with $A\subseteq B$, and \cat{B} satisfying the properties (ii)--(iv) of Lemma \ref{Lemma: DC, Theta(G)}. Then
given a set $\bopen{\cat{A},a}$,
\[ \bopen{\cat{A},a}= \pi_1(A\cap s_{\cat{B},a}(\bopen{\cat{B},a})) \]
and we obtain $s_{\cat{A},a}$ as the restriction
\[\bfig
\dtriangle/<-`>`^{ (}->/<800,500>[B`\bopen{\cat{B},a}`G_0; s_{\cat{B},a}`\pi_1`]
\dtriangle(-800,0)/<-``^{ (}->/<800,500>[A`\bopen{\cat{A},a}` \bopen{\cat{B},a}; s_{\cat{A},a}``]
\morphism(0,500)/^{ (}->/<800,0>[A`B;] \place(-500,75)[\pbanglef]
%
%
%
\efig\]
Similarly, given a set $\bopen{\cat{A},a\mapsto b}\subseteq G_1$,
\[ \bopen{\cat{A},a\mapsto b}= \bopen{\cat{B},a\mapsto b}\cap s^{-1}(\bopen{\cat{A},a}) \]
where $s$ is the source map $s:G_1\rightarrow G_0$. We conclude that \funksjon{\Form}{\thry{G}} is closed under
both equalizers and images.

\textbf{Binary coproducts.}  Write binary coproducts in $\Sets_{\kappa}$ as $X+Y=
\cterm{\pair{0,x},\pair{1,y}}{x\in X\wedge y\in Y}$. Then if $\bopen{\cat{A}+\cat{B},c}$ is non-empty, $c$ is a pair
$c=\pair{0,a}$ or $c=\pair{1,b}$. If the former, then $\bopen{\cat{A}+\cat{B}, \pair{0,a}}= \bopen{\cat{A},a}$, and the
section is given by composition:
\[\bfig
\square/>`<-`>`^{ (}->/<1000,500>[A`A+B`\bopen{\cat{A}+\cat{B}, \pair{0,a}}= \bopen{\cat{A},a}`G_0; p_1`s_{\cat{A},a}``]
\efig\]
The latter case is similar, and so is verifying that the set $\bopen{\cat{A}+\cat{B}, c\mapsto d}$ is open.
\end{proof}
\end{lemma}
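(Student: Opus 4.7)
The plan is to invoke the characterization of formal sheaves given by Lemma \ref{Lemma: DC, Theta(G)} and then check closure of the four conditions under each coherent-category operation. Condition (i) is handled ``for free'': since $\thry{G}$ is a coherent groupoid, $\Sh{\thry{G}}$ is a coherent topos, and by Remark \ref{Remark: Compact and coherent object}(iii) the full subcategory of compact decidable objects is already a (positive) decidable coherent category. So the real work is to verify that conditions (ii)--(iv) of Lemma \ref{Lemma: DC, Theta(G)}---$\kappa$-smallness of every fibre, openness of each section-base $\bopen{\cat{A},a}\subseteq G_0$ together with continuity of the tautological section $s_{\cat{A},a}$, and openness of the transport bases $\bopen{\cat{A},a\mapsto b}\subseteq G_1$---are preserved by finite limits, images, and finite coproducts taken inside $\Sh{\thry{G}}$.

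For each required operation I would compute the fibres explicitly, observing that they are just the corresponding set-theoretic operations in $\Sets_{\kappa}$ applied fibrewise (so $\kappa$-smallness is automatic), and then exhibit each new section-base and transport-base as a finite combination of the given ones. Concretely: for the terminal $\mathbf{1}$ the fibre is $\{\star\}$ so the bases are trivially open; for a binary product $\cat{A}\times\cat{B}$ one uses $\bopen{\cat{A}\times\cat{B},\pair{a,b}}=\bopen{\cat{A},a}\cap\bopen{\cat{B},b}$ and pairs the input sections over this intersection, and likewise $\bopen{\cat{A}\times\cat{B},\pair{a,b}\mapsto\pair{a',b'}}=\bopen{\cat{A},a\mapsto a'}\cap\bopen{\cat{B},b\mapsto b'}$; for a subobject $\cat{A}\hookrightarrow\cat{B}$ (covering equalizers and images once $\Sh{\thry{G}}$'s regular structure is in place) one writes $\bopen{\cat{A},a}=\pi_1(A\cap s_{\cat{B},a}(\bopen{\cat{B},a}))$ and restricts $s_{\cat{B},a}$ accordingly, with the transport-base obtained by intersecting $\bopen{\cat{B},a\mapsto b}$ with the $s:G_1\to G_0$-preimage of $\bopen{\cat{A},a}$; for a binary coproduct $\cat{A}+\cat{B}$ the bases split according to the canonical coproduct tags in $\Sets_{\kappa}$, reducing them to the bases of $\cat{A}$ and of $\cat{B}$.

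The main obstacle will be the subobject case, because the section map $s_{\cat{A},a}$ must be shown to be both a well-defined continuous section into the subspace $A\subseteq B$. This reduces to checking that $A\cap s_{\cat{B},a}(\bopen{\cat{B},a})$ is open in $A$ (immediate, as $A$ is open in $B$ by decidability of $\cat{B}$ and the fact that a subobject of a local homeomorphism is again a local homeomorphism on an open piece) and that its image under $\pi_1$ is open in $G_0$ (which follows because $\pi_1\upharpoonright_{s_{\cat{B},a}(\bopen{\cat{B},a})}$ is a homeomorphism onto $\bopen{\cat{B},a}$). Once this topological bookkeeping is in hand, images are handled by the same argument applied to the image-factorization already present in the coherent topos $\Sh{\thry{G}}$, and we conclude that $\Form(\thry{G})$ is closed under all the operations of a positive decidable coherent category inside $\Sh{\thry{G}}$.
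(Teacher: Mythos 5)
Your proposal is correct and takes essentially the same route as the paper's own proof: reduce condition (i) to Remark \ref{Remark: Compact and coherent object}, then check closure of conditions (ii)--(iv) of Lemma \ref{Lemma: DC, Theta(G)} under finite limits, images, and finite coproducts, using exactly the same identities $\bopen{\cat{A}\times\cat{B},\pair{a,b}}=\bopen{\cat{A},a}\cap\bopen{\cat{B},b}$, $\bopen{\cat{A},a}=\pi_1(A\cap s_{\cat{B},a}(\bopen{\cat{B},a}))$, $\bopen{\cat{A},a\mapsto b}=\bopen{\cat{B},a\mapsto b}\cap s^{-1}(\bopen{\cat{A},a})$, and the tag-splitting for coproducts. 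One minor remark: the openness of $A$ in $B$ in the subobject case comes simply from $\cat{A}$ being a subobject in the sheaf topos (subsheaves correspond to open subsets of the \'etale space), not from decidability of $\cat{B}$ -- but you in fact state this correct reason as well, so nothing is missing.
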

\begin{lemma}
Let \thry{G} be a weakly coherent groupoid. Then $\Form(\thry{G})\embedd \Sh{\thry{G}}$ has enough
$\kappa$-small models.
\begin{proof}
This follows from the fact that the coherent inclusion
\[\Form(\thry{G})\embedd \Sh{\thry{G}}\]
reflects covers, since every formal sheaf is compact, and any point, given by an element $x\in
G_0$,
\[\Sets \to \Sets/G_0 \epi \Sh{G_0} \epi \Sh{\thry{G}} \epi \Sh{\Form(\thry{G})} \]
yields a coherent functor $\Form(\thry{G})\to \Sets_{\kappa}\embedd\Sets$, since the value of the point at an
equivariant sheaf is the fiber over $x$, and formal sheaves have fibers in $\Sets_{\kappa}$.
\end{proof}
\end{lemma}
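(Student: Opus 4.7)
To show the coherent functors $\Form(\thry{G})\to\Sets_\kappa$ jointly reflect covers, my plan is to produce enough such functors from the points of $\Sh{\thry{G}}$ indexed by elements $x\in G_0$, and then transfer joint cover-reflection through the full coherent inclusion $\Form(\thry{G})\embedd\Sh{\thry{G}}$. Each $x\in G_0$ determines a point of $\Sh{\thry{G}}$ (via the factorization $\Sets\to\Sets/G_0\to\Sh{G_0}\to\Sh{\thry{G}}$), whose inverse image sends an equivariant sheaf $(A\to G_0,\alpha)$ to the fiber $A_x$. This inverse image is coherent, and its restriction along $\Form(\thry{G})\embedd\Sh{\thry{G}}$ takes values in $\Sets_\kappa$ by clause (ii) of Lemma \ref{Lemma: DC, Theta(G)}. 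Call the resulting functor $\mathrm{ev}_x:\Form(\thry{G})\to\Sets_\kappa$.

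Next I would establish that the inclusion $\Form(\thry{G})\embedd\Sh{\thry{G}}$ reflects covers. If a family $\{f_i:C_i\to C\}$ in $\Form(\thry{G})$ covers $C$ in $\Sh{\thry{G}}$, then compactness of $C$ (clause (i) of Lemma \ref{Lemma: DC, Theta(G)}) supplies a finite subfamily whose joint image equals $C$ in $\Sh{\thry{G}}$. By Lemma \ref{Lemma: Theta G is a coherent category}, $\Form(\thry{G})$ is a coherent full subcategory of $\Sh{\thry{G}}$, so images and finite joins of subobjects are computed the same way there; hence the finite subfamily already covers $C$ in $\Form(\thry{G})$.

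Combining, suppose a family $\{f_i:C_i\to C\}$ in $\Form(\thry{G})$ has the property that $\{\mathrm{ev}_x(f_i)\}$ is jointly surjective for every $x\in G_0$. Pointwise joint surjectivity on all $x\in G_0$ amounts to joint epimorphism on the underlying local homeomorphisms in $\Sh{G_0}$, and since the forgetful $\Sh{\thry{G}}\to\Sh{G_0}$ is an inverse image of a surjection of toposes and therefore reflects (joint) epimorphisms, the family is jointly epi in $\Sh{\thry{G}}$; by the cover-reflection above, it then covers $C$ in $\Form(\thry{G})$, which is precisely what is required.

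The main obstacle I anticipate is this last transfer from $\Sh{G_0}$ to $\Sh{\thry{G}}$. If one prefers to avoid invoking that the inverse image of a surjection of toposes reflects epimorphisms, a more elementary route is to observe that the family of points indexed by $x\in G_0$ is jointly conservative on $\Sh{\thry{G}}$, so fiberwise joint surjectivity at those points witnesses joint epimorphism directly.
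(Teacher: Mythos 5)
Your proposal is correct and follows essentially the same route as the paper: evaluation at the fibres over points $x\in G_0$ (via $\Sets \to \Sets/G_0 \to \Sh{G_0} \to \Sh{\thry{G}}$) gives coherent functors into $\Sets_\kappa$ by clause (ii) of Lemma \ref{Lemma: DC, Theta(G)}, fibrewise joint surjectivity at all such points forces the family to cover in $\Sh{\thry{G}}$ because these points are jointly surjective for $\Sh{\thry{G}}$, and compactness of formal sheaves together with the coherent full inclusion then yields a finite covering subfamily in $\Form(\thry{G})$. You merely spell out the transfer step (faithfulness of the forgetful inverse image reflecting joint epis) that the paper leaves implicit in its chain of surjections, so there is no substantive difference.
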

\begin{lemma}
If $f:\thry{G}\to\thry{H}$ is a morphism of \alg{wcGpd}, then the induced coherent inverse image functor
$f^*:\Sh{\thry{H}}\to\Sh{\thry{G}}$ restricts to a coherent functor $\Form(f)=F:\Form(\thry{H})\to
\Form(\thry{G})$,
\[
\bfig
\square/>`^{ (}->`^{ (}->`>/<750,400>[\Form(\thry{H})`\Form(\thry{G})` \Sh{\thry{H}}` \Sh{\thry{G}}; F```f^*]
\efig\]
\begin{proof}
If \cat{A} is an object of $\Form(\thry{H})$ classified
by $h:\thry{H}\to\thry{S}$, then $f^*(\cat{A})=F(\cat{A})$ is classified by $h\circ f:\thry{G}\to\thry{S}$ in
\alg{wcGpd}.
%
%
%
%
\end{proof}
\end{lemma}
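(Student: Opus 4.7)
The strategy is to observe that formal sheaves are defined as pullbacks of the generic decidable object $\cat{U}\in\Sh{\thry{S}}$ along \alg{CohGpd}-morphisms into $\thry{S}$, so the content of the lemma reduces to two points: (a) the composition $h\circ f$ lies in \alg{CohGpd} whenever $h$ and $f$ both do; and (b) the resulting restricted functor is coherent.

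For (a), let $\cat{A}\in \Form(\thry{H})$, so by Definition \ref{Definition: Form(G)} we have $\cat{A}=h^*(\cat{U})$ for some $h:\thry{H}\to\thry{S}$ in \alg{CohGpd}. By pseudo-functoriality of the inverse image construction on groupoid morphisms, $f^*(\cat{A})=f^*(h^*(\cat{U}))\cong (h\circ f)^*(\cat{U})$. To conclude that $f^*(\cat{A})$ is formal on $\thry{G}$, it is enough to check that $h\circ f:\thry{G}\to\thry{S}$ preserves compact objects. But $(h\circ f)^*\cong f^*\circ h^*$ as functors $\Sh{\thry{S}}\to\Sh{\thry{G}}$, and by hypothesis both $f^*$ and $h^*$ preserve compact objects; hence so does their composite. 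Therefore $h\circ f\in\alg{CohGpd}$ and $f^*(\cat{A})\in\Form(\thry{G})$. The bookkeeping conventions fixed before Definition \ref{Definition: Form(G)} — or else the fullness of $\Form(\thry{G})\hookrightarrow\Sh{\thry{G}}$, which makes this subcategory closed under isomorphism — allow us to identify $f^*(\cat{A})$ on the nose with $(h\circ f)^*(\cat{U})$.

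For (b), coherence of $f^*:\Sh{\thry{H}}\to\Sh{\thry{G}}$ is automatic, as it is the inverse image part of a geometric morphism. By Lemma \ref{Lemma: Theta G is a coherent category}, the full subcategories $\Form(\thry{H})\hookrightarrow\Sh{\thry{H}}$ and $\Form(\thry{G})\hookrightarrow\Sh{\thry{G}}$ are decidable coherent categories whose coherent structure is inherited from the ambient toposes. Since we have just shown that $f^*$ restricts to a functor $F$ between these subcategories, and since coherent structure is both preserved by $f^*$ and reflected by the full coherent inclusions, $F$ is coherent.

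There is no real obstacle here; the argument is essentially diagrammatic. The only subtle point worth mentioning is the strict-vs-up-to-isomorphism issue in step (a), which is why the authors have been careful to adopt the on-the-nose conventions spelled out in the preamble to this subsection; with those conventions in place the proof is immediate.
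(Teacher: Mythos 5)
Your proposal is correct and takes essentially the same route as the paper: the paper's one-line proof is precisely that $f^*(\cat{A})=(h\circ f)^*(\cat{U})$ with $h\circ f$ in \alg{CohGpd}, which you spell out via the compact-preservation of the composite and the on-the-nose bookkeeping conventions (and you also supply the routine check, left implicit in the paper, that the restricted functor is coherent because the coherent structure of $\Form(\thry{H})$ and $\Form(\thry{G})$ is computed as in the ambient toposes). One small caveat: your parenthetical fallback that fullness of $\Form(\thry{G})\hookrightarrow\Sh{\thry{G}}$ makes it closed under isomorphism is not right---full subcategories need not be replete, and conditions such as (ii) of Lemma \ref{Lemma: DC, Theta(G)} are not isomorphism-invariant---so the identification genuinely rests on the fixed on-the-nose conventions, as your main argument correctly uses.
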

This completes  the construction of the `syntactical' functor:
\begin{definition}
The functor \[\Form:\alg{wcGpd}\to\alg{dCoh}_{\kappa}^{\mng{op}}\] is defined by sending a weakly coherent groupoid \thry{G} to
the decidable coherent category \[\Form(\thry{G})\embedd \Sh{\thry{G}}\] of formal sheaves, and a morphism
$f:\thry{G}\to\thry{H}$ to the restricted inverse image functor $f^*:\Form(\thry{H})\to\Form(\thry{G})$.
\end{definition}
\subsection{The Syntax-Semantics Adjunction}
\label{Subsection: DC, The Syntax-Semantics Adjunction}
We now show that the syntactical functor is left adjoint to the semantical functor:
\[
\bfig
\morphism|a|/{@{>}@/^7pt/}/<800,0>[\alg{dCoh_{\kappa}}^{\mng{op}}` \alg{wcGpd}; \Mod]
\morphism|b|/{@{<-}@/^-7pt/}/<800,0>[\alg{dCoh_{\kappa}}^{\mng{op}}` \alg{wcGpd}; \Form]
\place(400,0)[\top]
\efig
\]
First, we identify a counit candidate. Given $\cat{D}$ in $\alg{dCoh_{\kappa}}$, we have the `evaluation'  functor
\[\cat{Y}_{\cat{D}}:\cat{D}\to\Sh{\thry{G}_{\cat{D}}}\]
which sends an object $D$ to the `definable' equivariant sheaf which is such that the fiber of $\cat{Y}(D)$  over $F\in X_{\cat{D}}$ is the set $F(D)$, or more informatively, such that the diagram,
\[\bfig
\square<500,300>[\cat{D}`\Sh{\thry{G}_{\cat{D}}}`\cat{C}_{\theory_{\cat{D}}}`\Sh{\thry{G}_{\theory_{\cat{D}}}};\cat{Y}_{\cat{D}}`\zeta_{\cat{D}}`\cong`\cat{M}]
\efig\]
commutes, using the map $\zeta_{\cat{D}}$ and isomorphism $\thry{G}_{\cat{D}}\cong\thry{G}_{\theory_{\cat{D}}}$ from Section \ref{Subsection: Representation theorem for decidable coherent categories}.
%
%
%
%
%
%
$\cat{Y}_{\cat{D}}$ factors through $\Form(\thry{G}_{\cat{D}})$, by Lemma \ref{Lemma: DC, Definables
are firm}, to yield a coherent functor
\[\epsilon_{\cat{D}}:\cat{D}\to \Form(\thry{G}_{\cat{D}})=\Form\circ \Mod(\cat{D})\]
And if $F:\cat{A}\to\cat{D}$ is an arrow of \alg{dCoh_{\kappa}}, the square
\[\bfig
\square<750,500>[\cat{A}` \Form\circ \Mod(\cat{A})` \cat{D}` \Form\circ
\Mod(\cat{D});\epsilon_{\cat{A}}` F` \Form\circ \Mod(F)` \epsilon_{\cat{D}}]
\efig\]
commutes.

Next, we consider the unit. Let \thry{H} be a groupoid in \alg{wcGpd}. We construct a morphism
\[\eta_{\thry{H}}:\thry{H}\to \thry{G}_{\Form(\thry{H})}= \Mod(\Form(\thry{G})).\]
First, as previously noticed, each $x\in H_0$ induces a coherent functor $\alg{M}_x:\Form(\thry{H})\to
\Sets_{\kappa}$. This defines a function $\eta_0:H_0\rightarrow X_{\Form(\thry{H})}$. Similarly, any
$a:x\rightarrow y$ in $H_1$ induces an invertible natural transformation $\alg{f}_a:\alg{M}_x\rightarrow \alg{M}_y$. This defines
a function $\eta_1:H_1\rightarrow G_{\Form(\thry{H})}$, such that \pair{\eta_1,\eta_0} is a morphism of
discrete groupoids.  We argue that $\eta_0$ and $\eta_1$ are continuous. Let a subbasic open
$U=(\pair{g_1:\cat{A}\rightarrow \cat{B}_1,\ldots,g_n:\cat{A}\rightarrow
\cat{B}_n},\pair{a_1,\ldots,a_n})\subseteq X_{\Form(\thry{H})}$ be given, with $g_i:\cat{A}=\pair{A\rightarrow
H_0, \alpha}\to \cat{B}_i=\pair{B_i\rightarrow H_0, \beta_i}$ an arrow of $\Form(\thry{H})$ and $a_i\in
\mathds{S}$, for $1\leq i\leq n$. Form the canonical product $\cat{B}_1\times \ldots\times \cat{B}_n$ in
\Sh{\thry{H}}, so as to get an arrow $g=\pair{g_1,\ldots,g_n}:\cat{A}\to \cat{B}_1\times \ldots \times\cat{B}_n$
in $\Form(\thry{H})$. Denote by \cat{C} the canonical image of $g$ in \Sh{\thry{H}} (and thus in
$\Form(\thry{H})$), such that the underlying set $C$ (over $H_0$) of \cat{C} is a subset of
$B_1\times_{H_0}\ldots\times_{H_0}B_n$. Then
\begin{align*}
\eta_0^{-1}(U) &= \cterm{x\in H_0}{\fins{y\in \alg{M}_x(\cat{A})}\alg{M}_x(g_i)(y)=a_i\ \textnormal{for}\ 1\leq i\leq n}\\
               &= \cterm{x\in H_0}{\fins{y\in A_x}g_i(y)=a_i\ \textnormal{for}\ 1\leq i\leq n}\\
               &= \cterm{x\in H_0}{\pair{a_1,\ldots,a_n}\in \alg{M}_x(\cat{C})}\\
               &= \cterm{x\in H_0}{\pair{a_1,\ldots,a_n}\in C_x}
\end{align*}
which is an open subset of $H_0$ by Lemma \ref{Lemma: DC, Theta(G)} since \cat{C} is in $\Form(\thry{H})$. Thus $\eta_0$ is continuous. Next,
consider a subbasic open of $G_{\Form(\thry{H})}$ of the form $U=(\cat{A},a\mapsto b)\subseteq
G_{\Form(\thry{H})}$, for $\cat{A}=\pair{A\rightarrow H_0, \alpha}$ in $\Form(\thry{H})$. Then
\begin{align*}
\eta_1^{-1}(U) &= \cterm{g:x\rightarrow y}{a\in \alg{M}_x(\cat{A})\wedge (\alg{f}_g)_{\cat{A}}(a)=b}\subseteq H_1\\
               &=  \cterm{g:x\rightarrow y}{a\in A_x\wedge \alpha(g,a)=b}\subseteq H_1
\end{align*}
which is an open subset of $H_1$, since \cat{A} is in $\Form(\thry{H})$. Thus $\eta_1$ is also continuous, so
that \pair{\eta_1,\eta_0} is a morphism of continuous groupoids.
\begin{lemma}\label{Lemma: DC, unit well-behaved triangle}
The triangle
\begin{equation}\label{Equation: DC, unit well-behaved triangle}\bfig
\dtriangle/<-_{)}`<-`->/<900,400>[\Sh{\thry{H}}` \Form(\thry{H})` \Sh{\thry{G}_{\Form(\thry{H})}}; `
\eta^*_{\Form(\thry{H})}` \cat{Y}_{\Form(\thry{H})}]
\efig\end{equation}
commutes.
\begin{proof}
Let $\cat{A}=\pair{A\rightarrow H_0,\alpha}$ in $\Form(\thry{H})$ be given, and write $E_{\cat{A}}\rightarrow
X_{\Form(\thry{H})}$ for the underlying sheaf of $\cat{Y}_{\Form(\thry{H})}(\cat{A})$. Write
$a:\thry{H}\rightarrow\thry{S}$ and $a':\thry{G}_{\Form(\thry{H})}\rightarrow\thry{S}$, respectively, for the \alg{wcGpd}
morphisms classifying these objects. Then the triangle
\[\bfig
\Vtriangle[\thry{H}`\thry{G}_{\Form(\thry{H})}`\thry{S};\eta_{\Form(\thry{H})}`a`a']
\efig\]
in \alg{Gpd} can be seen to commute. Briefly, for $x\in H_0$, we have
$a(x)=A_x=\alg{M}_x(\cat{A})=(E_{\cat{A}})_{\alg{M}_x}= (E_{\cat{A}})_{\eta_0(x)}=a'(\eta_0(x))$ and similarly for elements
of $H_1$.
\end{proof}
\end{lemma}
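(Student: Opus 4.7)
The plan is to reduce the triangle's commutation to a commutation of classifying morphisms into the generic decidable object $\cat{U}$ on $\thry{S}$, exploiting the fact that every object in $\Form(\thry{H})$ is by definition of the form $a^*(\cat{U})$ for some morphism $a:\thry{H}\to\thry{S}$ in $\alg{CohGpd}$. Fix an object $\cat{A}=\pair{A\to H_0,\alpha}$ of $\Form(\thry{H})$ with classifying morphism $a:\thry{H}\to\thry{S}$, so that $\cat{A}=a^*(\cat{U})$ in $\Sh{\thry{H}}$. On the other hand, $\cat{Y}_{\Form(\thry{H})}(\cat{A})$ is itself a formal sheaf on $\thry{G}_{\Form(\thry{H})}$ (this is where we use Lemma~\ref{Lemma: DC, Definables are firm} together with the construction of $\cat{Y}_{\cat{D}}$ via $\cat{M}^\dag$), with some classifying morphism $a':\thry{G}_{\Form(\thry{H})}\to\thry{S}$, so $\cat{Y}_{\Form(\thry{H})}(\cat{A})=a'^*(\cat{U})$.

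I would then verify the triangle
\[
\bfig \Vtriangle[\thry{H}`\thry{G}_{\Form(\thry{H})}`\thry{S};\eta_{\Form(\thry{H})}`a`a'] \efig
\]
commutes in $\alg{Gpd}$ by a direct computation on points and arrows. For $x\in H_0$, the definitions give $a(x)=A_x$ (the fiber of $\cat{A}$ at $x$), while $a'(\eta_0(x))=a'(\alg{M}_x)$ is the fiber of $\cat{Y}_{\Form(\thry{H})}(\cat{A})$ over $\alg{M}_x$, which by construction of $\cat{Y}_{\Form(\thry{H})}$ is exactly $\alg{M}_x(\cat{A})=A_x$. A parallel calculation for $g\in H_1$ shows $a_1(g)=\alpha(g,-)$ agrees with $a_1'(\eta_1(g))$, using that the action of an invertible natural transformation on the sheaf $\cat{Y}_{\Form(\thry{H})}(\cat{A})$ is evaluation of that transformation at $\cat{A}$, which in turn recovers $\alpha(g,-)$.

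Once the triangle commutes, functoriality of the inverse image (together with the on-the-nose conventions for $f^*$ fixed in Section~\ref{Subsubsection: Morphisms into S}) yields
\[
\eta^*_{\Form(\thry{H})}\bigl(\cat{Y}_{\Form(\thry{H})}(\cat{A})\bigr)
=\eta^*_{\Form(\thry{H})}\bigl(a'^*(\cat{U})\bigr)
=(a'\circ\eta_{\Form(\thry{H})})^*(\cat{U})
=a^*(\cat{U})=\cat{A},
\]
which is the required equality of objects. Naturality in $\cat{A}$ (i.e.\ that this identification respects morphisms of $\Form(\thry{H})$) is then immediate because both $\cat{Y}_{\Form(\thry{H})}$ and $\eta^*_{\Form(\thry{H})}$ are functors and the inclusion $\Form(\thry{H})\hookrightarrow\Sh{\thry{H}}$ is full and faithful.

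The main obstacle is bookkeeping: making the on-the-nose identifications of fibers under pullback work cleanly, so that the equalities above are literal equalities in $\Sh{\thry{H}}$ rather than merely isomorphisms. This is precisely the kind of subtlety that the authors flagged at the start of Section~\ref{Subsubsection: Morphisms into S}, and the proof proposed here leans on those conventions to convert what is morally a unique-up-to-iso statement into the strict commutativity claimed.
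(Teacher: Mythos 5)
Your proposal follows essentially the same route as the paper's proof: reduce the claim to the commutativity of the classifying triangle $a = a'\circ\eta_{\Form(\thry{H})}$ into $\thry{S}$, check it fiberwise on $H_0$ (via $a(x)=A_x=\alg{M}_x(\cat{A})=(E_{\cat{A}})_{\eta_0(x)}=a'(\eta_0(x))$) and analogously on $H_1$, and then conclude by pulling back $\cat{U}$. The paper leaves the final pullback step and the on-the-nose bookkeeping implicit, which you spell out, but the argument is the same.
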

It follows from Lemma \ref{Lemma: DC, unit well-behaved triangle} that the inverse image functor $\eta_{\Form(\thry{H})}^*$ preserves
compact objects, and so $\eta_{\Form(\thry{H})}:\thry{H}\to G_{\Form(\thry{H})}$ is indeed a
morphism of \alg{wcGpd}. It remains to verify that it is the component of a natural transformation. Given a
morphism $f:\thry{G}\to\thry{H}$ of \alg{wcGpd}, we must verify that the square
\[\bfig
\square<1000,500>[\thry{G}`\Mod\circ \Form(\thry{G})` \thry{H}` \Mod\circ \Form(\thry{H});
\eta_{\Form(\thry{G})}` f`  \Mod\circ \Form(f)` \eta_{\Form(\thry{H})}]
\efig\]
commutes. Let $x\in G_0$ be given. We chase it around the square. Applying $\eta_{\Form(\thry{G})}$, we obtain
the functor $\alg{M}_x:\Form(\thry{G})\to \Sets$ which sends an object $\cat{A}=\pair{A\rightarrow G_0,\alpha}$ to
$A_x$. Composing with $\Form(f):\Form(\thry{H})\to\Form(\thry{G})$, we obtain the functor
$\Form(\thry{H})\to \Sets$ which sends an object $\pair{B\rightarrow H_0,\beta}$ to the fiber over $x$
of the pullback
\[\bfig
\square[f_0^*(B)`B`G_0`H_0;```f_0]\place(100,400)[\pbangle] \efig\]
which is the same as the fiber $B_{f_0(x)}$. And this is the same functor that results from sending $x$ to
$f_0(x)$ and applying $\eta_{\Form(\thry{H})}$. For $a:x\rightarrow y$ in $G_1$, a similar check establishes
that $\eta_1\circ f_1(a):\alg{M}_{f_0(x)}\rightarrow \alg{M}_{f_0(y)}$ equals $\eta_1(a)\circ \Form(f):M_{x}\circ
\Form(f)\rightarrow \alg{M}_{y}\circ \Form(f)$. It remains to verify the triangle identities.
\begin{lemma}
The triangle identities hold:
\[
\bfig
\btriangle/<-`<-`<-/<1200,800>[\Form(\thry{H})` \Form \circ \Mod \circ \Form(\thry{H})` \Form(\thry{H});
\Form(\eta_{\thry{H}})` 1_{\Form(\thry{H})}` \epsilon_{\Form(\thry{H})}]
\place(300,300)[=]
\btriangle(0,-1200)<1200,800>[\Mod(\cat{D})` \Mod\circ \Form \circ \Mod(\cat{D})` \Mod(\cat{D});
\eta_{\Mod(\cat{D})}` 1_{\Mod(\cat{D})}` \Mod(\epsilon_{\cat{D}})]
\place(300,-900)[=] \efig
\]
\begin{proof}
We begin with the first triangle, which we write:
\[
\bfig
\btriangle/<-`<-`<-/<1000,800>[\Form(\thry{H})` \Form(\thry{G}_{\Form(\thry{H})})` \Form(\thry{H});
\Form(\eta_{\thry{H}})` 1_{\Form(\thry{H})}` \epsilon_{\Form(\thry{H})}]
\efig\]
This triangle commutes by the definition of $\epsilon_{\Form(\thry{H})}$ and Lemma \ref{Lemma: DC, unit
well-behaved triangle}, as can be seen by the following diagram:
\[\bfig
\square/>`^{ (}->`^{ (}->`<-/<1000,500>[\Form(\thry{H})` \Form(\thry{G}_{\Form(\thry{H})})` \Sh{\thry{H}}`
\Sh{\thry{G}_{\Form(\thry{H})}}; \epsilon_{\Form(\thry{H})}` ``\eta^*_{\thry{H}}]
\morphism(0,500)<1000,-500>[\Form(\thry{H})`\Sh{\thry{G}_{\Form(\thry{H})}};\cat{Y}_{\Form(\thry{H})} ]
\place(230,120)[=]\place(610,360)[=]
\efig\]
We pass to the second triangle, which can be written as:
\[\bfig
\btriangle(0,0)<1000,800>[\thry{G}_{\cat{D}}` \thry{G}_{\Form(\thry{G}_{\cat{D}})}` \thry{G}_{\cat{D}};
\eta_{\thry{G}_{\cat{D}}}` 1_{\thry{G}_{\cat{D}}}` \Mod(\epsilon_{\cat{D}})]
\efig
\]
Let $\alg{N}:\cat{D}\to\Sets$ in $X_{\cat{D}}$ be given. As an element in $X_{\cat{D}}$, it determines a coherent
functor $\alg{M}_{\alg{N}}:\Form(\thry{G}_{\cat{D}})\to \Sets$, the value of which at $\cat{A}=\pair{A\rightarrow
X_{\cat{D}}, \alpha}$ is the fiber $A_{\alg{N}}$. Applying $\Mod(\epsilon_{\cat{D}})$ is composing with the functor
$\epsilon_{\cat{D}}:\cat{D}\to \Form(\thry{G}_{\cat{D}})$, to yield the functor $\alg{M}_{\alg{N}}\circ
\epsilon_{\cat{D}}:\cat{D}\to\Sets$, the value of which at an object $B$ in \cat{D} is the fiber over $\alg{N}$ of
$\cat{Y}_{\cat{D}}(B)$, which of course is just $\alg{N}(B)$. For an invertible natural transformation $\alg{f}:\alg{M}\rightarrow \alg{N}$ in
$G_{\cat{D}}$, the chase is entirely similar, and we conclude the the triangle commutes.
\end{proof}
\end{lemma}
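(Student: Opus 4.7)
The plan is to handle the two triangle identities separately, using Lemma \ref{Lemma: DC, unit well-behaved triangle} for the first and an explicit element chase for the second. The underlying reason the identities hold strictly (and not merely up to isomorphism) is the on-the-nose bookkeeping on fibers of pullbacks of $\cat{U}$ that was fixed before Definition \ref{Definition: Form(G)}, and the main difficulty will be trusting that this bookkeeping does the job at the right points of both diagrams.

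For the first triangle, the strategy is to post-compose with the faithful inclusion $\Form(\thry{H}) \hookrightarrow \Sh{\thry{H}}$ and carry out the computation in $\Sh{\thry{H}}$. By construction, $\epsilon_{\Form(\thry{H})}$ is the factorization of the evaluation functor $\cat{Y}_{\Form(\thry{H})}$ through $\Form(\thry{G}_{\Form(\thry{H})}) \hookrightarrow \Sh{\thry{G}_{\Form(\thry{H})}}$, and $\Form(\eta_{\thry{H}})$ is the restriction to formal sheaves of the inverse image $\eta^*_{\thry{H}}$. Post-composed with the inclusion, the composite $\Form(\eta_{\thry{H}}) \circ \epsilon_{\Form(\thry{H})}$ becomes $\eta^*_{\thry{H}} \circ \cat{Y}_{\Form(\thry{H})}$, which by Lemma \ref{Lemma: DC, unit well-behaved triangle} equals the inclusion itself. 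Since the inclusion is faithful, this forces $\Form(\eta_{\thry{H}}) \circ \epsilon_{\Form(\thry{H})} = 1_{\Form(\thry{H})}$.

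For the second triangle, the plan is to chase an element of $\thry{G}_{\cat{D}}$ through the composite. Take $\alg{M} \in X_{\cat{D}}$, viewed as a coherent functor $\cat{D}\to\Sets_\kappa$. Applying $\eta_{\Mod(\cat{D})}$ produces the coherent functor $\alg{M}_* : \Form(\thry{G}_{\cat{D}}) \to \Sets_\kappa$ that sends a formal sheaf $\cat{A} = \pair{A \to X_{\cat{D}}, \alpha}$ to the fiber $A_{\alg{M}}$. Then $\Mod(\epsilon_{\cat{D}})$ is precomposition with $\epsilon_{\cat{D}}$, so at $B \in \cat{D}$ we get the fiber over $\alg{M}$ of $\epsilon_{\cat{D}}(B) = \cat{Y}_{\cat{D}}(B)$, which by the defining property of the evaluation functor is exactly $\alg{M}(B)$. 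Hence we recover $\alg{M}$ on the nose. The corresponding chase for an invertible natural transformation $\alg{f}:\alg{M}\to\alg{N}$ in $G_{\cat{D}}$ proceeds identically, since by the same definitions $\eta_1(\alg{f})$ acts on the fiber $A_{\alg{M}}$ by $\alpha(\alg{f},-)$, and restricting along $\epsilon_{\cat{D}}$ recovers the original component $f_B: \alg{M}(B)\to\alg{N}(B)$.

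The main obstacle I anticipate is verifying that these identifications are equalities rather than merely canonical isomorphisms: one must check carefully that the chosen fiber of $\eta_{\thry{H}}^* \cat{U}$ over $x \in H_0$ is literally $A_x$, not just canonically bijective with it, and similarly that $\cat{Y}_{\cat{D}}(B)$ has fiber $\alg{M}(B)$ at $\alg{M}$ on the nose. Once these bookkeeping conventions are in hand from the discussion preceding Definition \ref{Definition: Form(G)}, both triangles reduce to the verifications sketched above.
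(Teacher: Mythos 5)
Your proposal is correct and follows essentially the same route as the paper: the first identity is obtained by post-composing with the full subcategory inclusion into $\Sh{\thry{H}}$, using the factorization defining $\epsilon_{\Form(\thry{H})}$ together with Lemma \ref{Lemma: DC, unit well-behaved triangle}, and the second by the same fiberwise element chase through $\eta$ and $\Mod(\epsilon_{\cat{D}})$, with the on-the-nose bookkeeping conventions doing the work of turning canonical identifications into equalities.
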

\begin{theorem}\label{Proposition: DC, The adjunction}
The contravariant functors \Mod\ and $\Form$ are adjoint,
\[
\bfig
\morphism|a|/{@{>}@/^7pt/}/<800,0>[\alg{dCoh_{\kappa}}^{\mng{op}}` \alg{wcGpd}; \Mod]
\morphism|b|/{@{<-}@/^-7pt/}/<800,0>[\alg{dCoh_{\kappa}}^{\mng{op}}` \alg{wcGpd}; \Form]
\place(400,0)[\top]
\efig
\]
where \Mod\ sends a decidable coherent category \cat{D} to the semantic groupoid
\homset{\alg{dCoh}}{\cat{D}}{\Sets_{\kappa}} equipped with the coherent topology, and $\Form$ sends a weakly
coherent groupoid \thry{G} to the full subcategory $\Form(\thry{G})\embedd \Sh{\thry{G}}$ of formal sheaves, i.e.\  those classified by the morphisms in \homset{\alg{wcGpd}}{\thry{G}}{\thry{S}}.
\end{theorem}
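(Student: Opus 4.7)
The plan is that essentially all the data for the adjunction has already been assembled in the lemmas leading up to the theorem, so the proof is a matter of invoking the standard characterization of a contravariant adjunction in terms of unit, counit, naturality, and triangle identities, and verifying that each ingredient is in place.

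More specifically, I would proceed as follows. First, I would observe that the unit candidate $\eta_{\thry{H}}:\thry{H}\to \Mod\circ\Form(\thry{H})$ has been defined as the pair $\pair{\eta_1,\eta_0}$ sending a point $x\in H_0$ to the fibre functor $\alg{M}_x:\Form(\thry{H})\to\Sets_{\kappa}$ and an arrow $a:x\to y$ in $H_1$ to the induced invertible natural transformation. Continuity of $\eta_0$ and $\eta_1$ has been checked directly on subbasic opens, and Lemma \ref{Lemma: DC, unit well-behaved triangle} gives the commuting triangle showing $\eta_{\thry{H}}^*$ preserves compact objects, hence $\eta_{\thry{H}}\in\alg{CohGpd}$. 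Naturality in $f:\thry{G}\to\thry{H}$ has been verified by a direct chase of points and arrows through the relevant fibre-over-$x$ computation.

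Next, I would note that the counit candidate $\epsilon_{\cat{D}}:\cat{D}\to \Form\circ\Mod(\cat{D})$ arises by factoring the evaluation functor $\cat{Y}_{\cat{D}}:\cat{D}\to\Sh{\thry{G}_{\cat{D}}}$ through $\Form(\thry{G}_{\cat{D}})\hookrightarrow \Sh{\thry{G}_{\cat{D}}}$; the factorization is legitimate by Lemma \ref{Lemma: DC, Definables are firm}, which says the canonical interpretation $\cat{M}^{\dag}$ lands in formal sheaves. Coherence of $\epsilon_{\cat{D}}$ follows from coherence of $\cat{M}^{\dag}$ and the fact that the inclusion $\Form(\thry{G}_{\cat{D}})\hookrightarrow\Sh{\thry{G}_{\cat{D}}}$ reflects coherent structure (it creates finite limits, images, and finite coproducts, as established in Lemma \ref{Lemma: Theta G is a coherent category}). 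Naturality of $\epsilon$ in $F:\cat{A}\to\cat{D}$ has already been stated as a commuting square in the discussion immediately preceding the theorem.

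Having dispatched $\eta$ and $\epsilon$, the theorem reduces to the triangle identities, which are precisely what the preceding lemma establishes: the first triangle commutes by the construction of $\epsilon_{\Form(\thry{H})}$ together with Lemma \ref{Lemma: DC, unit well-behaved triangle}, and the second commutes by the fibre-wise chase showing that $\alg{M}_{\alg{N}}\circ \epsilon_{\cat{D}}$ computes $\alg{N}(B)$ on objects and acts correctly on natural isomorphisms. I would not expect any real obstacle here, since all four required verifications are already separated out as lemmas; the only task is to cite them and conclude. If there is any subtlety at all, it concerns the bookkeeping conventions fixed in Section \ref{Subsubsection: Morphisms into S} (so that fibres of $f^*(\cat{U})$ are the same sets as fibres of $\cat{U}$), which is what makes the two triangles commute \emph{on the nose} rather than merely up to canonical isomorphism.
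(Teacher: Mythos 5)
Your proposal is correct and follows essentially the same route as the paper: the theorem is indeed proved by assembling the previously constructed unit and counit, their continuity/coherence and naturality checks, and the triangle-identities lemma, with the on-the-nose bookkeeping conventions doing exactly the work you indicate.
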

Notice that if \cat{D} is an object of \alg{dCoh_{\kappa}}, then the counit component
$\epsilon_{\cat{D}}:\cat{D}\to \Form\circ\Mod(\cat{D})$ is a Morita equivalence of categories, in the sense
that  it induces an equivalence $\sh{D}\simeq \Sh{\Form\circ\Mod(\cat{D})}$. In the case where \cat{D} is a
pretopos, the counit is, moreover,  also an equivalence of categories, since any decidable compact object in
\sh{D} is coherent and therefore isomorphic to a representable in that case. Now, say that a weakly coherent groupoid \thry{G} is \emph{semantic} if there exists a family $(f_i:\thry{G}\to\thry{S})$ of morphisms of topological groupoids such that for all $i\in I$ and some (equivalently, all) $a\in \mathds{S}$, the morphism $f_i$ is an $N$-fibration with respect to the open subgropoid $U=\bopen{a}$,
\[N=\left(\begin{array}{c}
 a  \\
  a \mapsto a   \\
 a
\end{array}\right)\]
and $((f_i)_0^{-1}(U),(f_i)_1^{-1}(N))_{i\in I}$ is a generating family of s-compact subgroupoids. Since by Lemma \ref{Lemma: Definables are GUNS} $\pair{\thry{S},U,N}\cong \cat{U}$, it follows from Lemma \ref{Lemma: GUN subobjects} that $\pair{\thry{G},(f_i)_0^{-1}(U),(f_i)_1^{-1}(N)}\cong f_i^*(\cat{U})$ and so that $f_i$ is compact and $f_i^*(\cat{U})\in\Form(\thry{G})$. Let \alg{SemGpd} be the full subcategory of \alg{wcGpd} consisting of semantic groupoids.
%
%
%
\begin{corollary}
The adjunction of Theorem \ref{Proposition: DC, The adjunction} restricts to an adjunction
\[
\bfig
\morphism|a|/{@{>}@/^7pt/}/<800,0>[\alg{dCoh_{\kappa}}^{\mng{op}}` \alg{SemGpd}; \Mod]
\morphism|b|/{@{<-}@/^-7pt/}/<800,0>[\alg{dCoh_{\kappa}}^{\mng{op}}` \alg{SemGpd}; \Form]
\place(400,0)[\top]
\efig
\]
with the property that the unit and counit components are Morita equivalences of categories and topological
groupoids respectively.
\begin{proof}
\Mod\ factors through \alg{SemGpd} by Lemma \ref{Lemma: Definables are GUNS} (cf.\ \ref{Lemma: Mod factors through weakly coherent groupoids}) with the required family of morphisms, for a decidable coherent category \cat{D}, being the evaluation morphisms $f_D:\thry{G}_{\cat{D}}\to\thry{S}$ sending a functor to its value at $D\in \cat{D}$ and an isomorphism to its $D$-component. If \thry{G} is a semantic groupoid, then since the objects $f_i^*(\cat{U})$ are generating and in $\Form(\thry{G})$, the latter forms a site for \Sh{\thry{G}} when equipped with the coherent coverage, whence the unit of the adjunction is a Morita equivalence.
\end{proof}
\end{corollary}
%
%
%
%
%
%
%
%
%
%
%
%
%
%
%
%
%
%
%
\subsection{Stone Duality for Classical First-Order Logic}

Returning to the classical first-order logical case, we can restrict the adjunction further to the full subcategory $\alg{BCoh_{\kappa}}\embedd \alg{dCoh_{\kappa}}$ of Boolean coherent categories. Unlike in the decidable coherent case,  the pretopos completion of a Boolean coherent category is again Boolean, so that \alg{BCoh_{\kappa}} is closed under pretopos completion. Since, as we mentioned in Section \ref{Subsection: Theories and Models}, completing a first-order theory so that its syntactic category is a pretopos involves only a conservative extension of the theory and does not change the category of models, it is natural to represent the classical first-order theories by the subcategory of Boolean pretoposes (see e.g.\ \cite{makkaireyes}, \cite{makkai:87b}).  We shall refer to the groupoids in the image of the semantic functor \Mod\ restricted to the full subcategory of Boolean pretoposes $\alg{BPTop_{\kappa}}\embedd\alg{dCoh_{\kappa}}$,  as \emph{Stone groupoids}.  Thus $\alg{StoneGpd}\embedd\alg{SemGpd}$ is the full subcategory of topological groupoids of models of theories in classical, first-order logic (the morphisms are still those continuous homomorphisms that preserve compact sheaves).
\begin{corollary}\label{Cor:adj for bpt}
The adjunction of Theorem \ref{Proposition: DC, The adjunction} restricts to an adjunction
\[
\bfig
\morphism|a|/{@{>}@/^7pt/}/<800,0>[\alg{BPTop_{\kappa}}^{\mng{op}}` \alg{StoneGpd}; \Mod]
\morphism|b|/{@{<-}@/^-7pt/}/<800,0>[\alg{BPTop_{\kappa}}^{\mng{op}}` \alg{StoneGpd}; \Form]
\place(400,0)[\top]
\efig
\]
with the property that the unit and counit components are Morita equivalences of topological groupoids and equivalences of pretoposes, respectively.
\end{corollary}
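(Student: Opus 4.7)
The plan is to reduce the statement to the ambient adjunction of Theorem~\ref{Proposition: DC, The adjunction} together with its restriction to $\alg{SemGpd}$ in the preceding corollary, so that the only substantive task is to check that both functors restrict to the indicated subcategories; the unit and counit claims then transfer automatically.

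First, I would observe that $\Mod$ sends $\alg{BPTop}_{\kappa}$ into $\alg{StoneGpd}$ by definition, since $\alg{StoneGpd}$ is precisely the full image of $\Mod$ on $\alg{BPTop}_{\kappa}$; the restriction is well-defined on morphisms because $\alg{StoneGpd}$ is a full subcategory of $\alg{SemGpd} \subseteq \alg{CohGpd}$.

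The one step with content is to show that $\Form$ sends $\alg{StoneGpd}$ into $\alg{BPTop}_{\kappa}$. I would argue as follows: let $\thry{H}$ be a Stone groupoid, so that $\thry{H} = \Mod(\cat{D})$ for some $\cat{D} \in \alg{BPTop}_{\kappa}$. By the discussion following Theorem~\ref{Proposition: DC, The adjunction}, because $\cat{D}$ is a pretopos, the counit $\epsilon_{\cat{D}}: \cat{D} \to \Form \circ \Mod(\cat{D})$ is already an equivalence of categories --- every decidable compact object of $\sh{D}$ is then coherent and therefore isomorphic to a representable. Hence $\Form(\thry{H}) \simeq \cat{D}$ as categories. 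Since both being a pretopos and being Boolean are invariants of categorical equivalence (and the closure of Boolean coherent categories under pretopos completion is noted at the start of this subsection), $\Form(\thry{H})$ is itself a Boolean pretopos. On morphisms there is nothing to check: $\Form(f)$ is already coherent by the general theorem, which is all that is required.

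With the restriction in hand, the rest is formal. The counit at $\cat{D} \in \alg{BPTop}_{\kappa}$ is precisely the equivalence just invoked, hence an equivalence of pretoposes. The unit at a Stone groupoid $\thry{H}$ is a Morita equivalence of topological groupoids, inherited directly from the $\alg{SemGpd}$ restriction of the preceding corollary, since $\alg{StoneGpd} \hookrightarrow \alg{SemGpd}$ is a full subcategory. Naturality of the unit and counit, and the triangle identities, need no further verification. The main obstacle I foresee is the $\Form$ side of the restriction, and this dissolves into two facts already at hand: the counit is an equivalence at any pretopos, and Boolean pretoposes are closed under categorical equivalence.
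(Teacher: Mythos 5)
Your proposal is correct and follows essentially the same route the paper intends: the only substantive point is that $\Form$ carries Stone groupoids back into Boolean pretoposes, which you settle exactly as the paper does, by invoking the remark after Theorem \ref{Proposition: DC, The adjunction} that the counit is an equivalence at any pretopos (decidable compact objects of $\sh{D}$ being coherent, hence representable) together with the invariance of the Boolean pretopos property under equivalence, while $\Mod$ restricts by the very definition of $\alg{StoneGpd}$ as the full image of $\Mod$ on $\alg{BPTop_{\kappa}}$. The unit and counit claims then transfer from the ambient adjunction just as you say, so no gap remains.
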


Moreover, given the obvious notion of `continuous natural transformation' of topological groupoid homomorphisms, the unit components of the foregoing adjunction can also be shown to be equivalences. Thus we have the following:

\begin{theorem}\label{thm:duality for bpt}
The adjunction of Corollary \ref{Cor:adj for bpt} is a (bi-)equivalence,
\begin{equation}\label{eq: Stone duality for bccs}
{\alg{BPTop_{\kappa}}^{\mathrm{op}}} \simeq \alg{StoneGpd}
\end{equation}
establishing a duality between the category of ($\kappa$-small) Boolean pretoposes and Stone topological groupoids.
\end{theorem}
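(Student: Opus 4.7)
The theorem strengthens Corollary \ref{Cor:adj for bpt} by asserting that the unit components $\eta_{\thry{H}}:\thry{H}\to\Mod(\Form(\thry{H}))$ are equivalences in the 2-category of topological groupoids (with continuous natural transformations as 2-cells), not merely Morita equivalences. My plan is to obtain this formally by exploiting the equivalence of the counit already established.

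The first step is to fix the 2-categorical setting. I would define a continuous natural transformation $\tau : f \Rightarrow g$ between parallel morphisms $f, g : \thry{G} \rightrightarrows \thry{H}$ of topological groupoids to be a continuous map $\tau : G_0 \to H_1$ with $s \circ \tau = f_0$, $t \circ \tau = g_0$, satisfying the usual naturality square over $G_1$. On the algebraic side, the 2-cells in $\alg{BPTop_{\kappa}}$ are natural isomorphisms between coherent functors (automatically invertible in the Boolean case). One then verifies that both $\Mod$ and $\Form$ extend to contravariant 2-functors; the only non-routine point is continuity of $\Mod(\sigma)$ for $\sigma : F \Rightarrow G$, which is checked by computing that the preimage of each subbasic open $\bopen{A, a\mapsto b} \subseteq G_{\cat{A}}$ is a finite intersection of subbasic opens involving $F(A)$, $G(A)$ and $\sigma_A$.

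Next, since $\epsilon_{\cat{D}} : \cat{D} \to \Form(\Mod(\cat{D}))$ is an equivalence of Boolean pretoposes by Corollary \ref{Cor:adj for bpt}, applying the 2-functor $\Mod$ yields an equivalence $\Mod(\epsilon_{\cat{D}})$ in the 2-category of topological groupoids. The triangle identity reads $\Mod(\epsilon_{\cat{D}}) \circ \eta_{\Mod(\cat{D})} = 1_{\Mod(\cat{D})}$, so for any quasi-inverse $q$ of $\Mod(\epsilon_{\cat{D}})$ we obtain $\eta_{\Mod(\cat{D})} \cong q \circ \Mod(\epsilon_{\cat{D}}) \circ \eta_{\Mod(\cat{D})} = q$, exhibiting $\eta_{\Mod(\cat{D})}$ itself as an equivalence. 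By definition, every Stone groupoid is of the form $\Mod(\cat{D})$ for some Boolean pretopos $\cat{D}$, so the unit is an equivalence at every object of $\alg{StoneGpd}$, which together with the counit equivalences from Corollary \ref{Cor:adj for bpt} gives the claimed biequivalence.

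The main obstacle is the 2-categorical bookkeeping: making the notion of continuous natural transformation precise, verifying that $\Mod$ and $\Form$ are pseudo-2-functors with respect to it, and in particular checking continuity of the assignment $\sigma \mapsto \Mod(\sigma)$ at the subbasic-open level. Once this scaffold is in place, the deduction from the triangle identity is purely formal; the continuity of the coherence data witnessing that $\Mod(\epsilon_{\cat{D}})$ is an equivalence follows automatically by applying the 2-functor $\Mod$ to the natural isomorphisms that witness $\epsilon_{\cat{D}}$ as an equivalence of pretoposes.
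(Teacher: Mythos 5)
Your argument is correct, and it supplies something the paper itself does not: at this point the paper offers no detailed proof, merely remarking that once continuous natural transformations are admitted as 2-cells the unit components ``can also be shown to be equivalences'' (with details deferred to the second author's dissertation). Your route --- extending $\Mod$ to 2-cells by whiskering, $M\mapsto M\sigma$, with continuity checked on subbasic opens (the preimage of $\bopen{A,a\mapsto b}\subseteq G_{\cat{A}}$ is the basic open $\bopen{\pair{1_{F(A)},\sigma_A},\pair{a,b}}\subseteq X_{\cat{D}}$, and the preimages of $s^{-1}(U)$ and $t^{-1}(U)$ are open because $\Mod(F)$ and $\Mod(G)$ are continuous), and then transporting the counit equivalences of Corollary \ref{Cor:adj for bpt} across the triangle identity $\Mod(\epsilon_{\cat{D}})\circ\eta_{\Mod(\cat{D})}=1_{\Mod(\cat{D})}$, using that every object of $\alg{StoneGpd}$ is by definition of the form $\Mod(\cat{D})$ --- is a clean and economical way to obtain exactly that claim, trading a direct construction of a quasi-inverse to each $\eta_{\thry{H}}$ for 2-functoriality bookkeeping. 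In that bookkeeping you should make explicit: that $\Form$ acts on 2-cells because a continuous natural transformation induces a geometric transformation between the induced geometric morphisms on equivariant sheaves, which restricts to formal sheaves; that $\eta$ and $\epsilon$ are natural with respect to 2-cells as well; that a quasi-inverse of $\epsilon_{\cat{D}}$ is again a coherent functor between objects of $\alg{BPTop_{\kappa}}$, so $\Mod$ applies to it and to its witnessing isomorphisms; and that 2-cells valued in a topological groupoid invert continuously since the inversion map of the codomain is continuous. One parenthetical in your write-up is wrong, though harmless: natural transformations between coherent functors of Boolean pretoposes are \emph{not} automatically invertible --- they correspond to elementary embeddings of models, which are monic but need not be onto --- so the 2-cells on the syntactic side must be taken to be natural isomorphisms by stipulation, which is also what guarantees that $\Mod(\sigma)$ lands in $G_{\cat{A}}$, i.e.\ among invertible natural transformations.
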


Finally, a remark on the posetal case and classical Stone duality for Boolean algebras. By a \emph{coherent space} we mean a compact topological space such that the compact open sets are closed under intersection and form a basis for the topology. A \emph{coherent function} between coherent spaces is a continuous function such that the inverse image of a compact open is again compact. Stone duality can be obtained as a restriction of a contravariant adjunction between the category \alg{dLat} of distributive lattices and homomorphisms and the category \alg{CohSpace} of coherent spaces and coherent functions
\begin{equation}\label{eq: Stone duality for distlat}
\bfig
\morphism|a|/{@{>}@/^7pt/}/<800,0>[\alg{dLat}^{\mng{op}}` \alg{CohSpace}; ]
\morphism|b|/{@{<-}@/^-7pt/}/<800,0>[\alg{dLat}^{\mng{op}}` \alg{CohSpace}; ]
\place(400,0)[\top]
\efig
\end{equation}
where, as in Stone duality, the right adjoint is the `Spec' functor obtained by taking prime filters (or homming into the lattice 2), and the left adjoint is obtained by taking the distributive lattice of compact opens (or homming into the Sierpi\'{n}ski space, i.e.\ the set 2 with one open point). This adjunction restricts to a contravariant equivalence between distributive lattices and sober coherent spaces, and further to the full subcategory of Boolean algebras, $\alg{BA}\hookrightarrow\alg{dLat}$, and the full subcategory of Stone spaces and continuous functions, $\alg{Stone}\hookrightarrow \alg{CohSpace}$, so as to give the contravariant equivalence of classical Stone duality:
\begin{equation}\label{adj:stonedual}
\bfig
\morphism|a|/{@{>}@/^7pt/}/<800,0>[\alg{BA}^{\mng{op}}` \alg{Stone}; ]
\morphism|b|/{@{<-}@/^-7pt/}/<800,0>[\alg{BA}^{\mng{op}}` \alg{Stone}; ]
\place(400,0)[\simeq]
\efig
\end{equation}

The adjunction \eqref{eq: Stone duality for distlat} can be obtained from the adjunction of Theorem \ref{Proposition: DC, The adjunction} as follows. A poset is a distributive lattice if and only if it is a coherent category (necessarily decidable), and as we remarked after Definition \ref{Definition: DC, DCkappa}, such a poset always has enough $\kappa$-small models, so that $$\alg{dLat}\embedd\alg{dCoh_{\kappa}}$$ is the subcategory of posetal objects.  On the other side, any space can be considered as a trivial topological groupoid, with only identity arrows, and it is straightforward to verify that this yields a full embedding $$\alg{CohSpace}\embedd\alg{wcGpd}.$$ Since a coherent functor from a distributive lattice \cat{L} into \Sets\ sends the top object in \cat{L} to the terminal object 1 in \Sets, and everything else to a subobject of 1, restricting the semantic functor \Mod\ to \alg{dLat} gives us the right adjoint of (\ref{eq: Stone duality for distlat}). In the other direction, applying the syntactic functor \Form\ to the subcategory $\alg{CohSpace}\embedd\alg{wcGpd}$ does not immediately give us a functor into \alg{dLat}, simply because the formal sheaves do not form a poset (for instance, by Lemma \ref{Lemma: DC, Theta(G)}, the formal sheaves on a coherent groupoid include all finite coproducts of 1). However, if we compose  with the functor $\mathrm{Sub}(1):\alg{dCoh_{\kappa}}\to \alg{dLat}$ which sends a coherent category $\cat{C}$ to its distributive lattice $\mathrm{Sub}_{\cat{C}}(1)$ of subobjects of 1, then it is straightforward to verify that we have a restricted adjunction
\[
\bfig
\morphism|a|/{@{>}@/^7pt/}/<800,0>[\alg{dLat}^{\mng{op}}` \alg{CohSpace}; \Mod]
\morphism|b|/{@{<-}@/^-7pt/}/<800,0>[\alg{dLat}^{\mng{op}}` \alg{CohSpace}; \hspace{1ex}\mathrm{Form}_1]
\place(400,0)[\top]
\efig
\]
where $\mathrm{Form}_1(\cat{C}) = \mathrm{Sub}_{\mathrm{Form}(\cat{C})}(1)$. Moreover, this is easily seen to be precisely the adjunction (\ref{eq: Stone duality for distlat}), of which classical Stone duality for Boolean algebras is a special case.  Indeed, again up to the reflection into $\mathrm{Sub}(1)$, the duality \eqref{adj:stonedual} is precisely the poset case of the duality \eqref{eq: Stone duality for bccs} between ($\kappa$-small) Boolean pretoposes and Stone topological groupoids.

\subsection{Future work}
The underlying idea of this paper is to show how the representation theorem of Butz and Moerdijk, suitably adjusted and translated into logical terms, can be used to extend Stone duality to predicate logic. The result is, albeit in a very loose sense, a syntax-semantics `duality' between coherent decidable theories (subsuming classical first-order theories) and topological groupoids of models. In the context of the dualities mentioned in the introduction, this is another step towards a formulation and understanding of the dual nature of the syntax and semantics of theories.
The following is a selection of some the open problems, loose ends, and directions for ongoing and future work in the further pursuit of this goal.
\begin{enumerate}
\item There are several questions and open problems regarding topological groupoids and equivariant sheaf toposes with relevance for extending, sharpening, or finding applications of the duality theory, such as providing a characterization of coherent groupoids and coherent morphisms between them (the techniques of Lemmas \ref{Lemma: Generating set of GUNS} and \ref{Lemma: Compact bi-index} can be extended to yield a characterization of coherent groupoids, but a better one is desirable, as is a better characterization of semantic groupoids). For another example, \cite{preprint:subgpds} supplies a proof of the (known but apparently unpublished) fact that a subgroupoid inclusion induces an inclusion of subtoposes and uses this to derive a topological characterization of the definable subsets of spaces of models.
     The role of the index set (and the groupoid \thry{S} `of small sets') and the possibility and mechanics of `re-indexing' is also to be further investigated (some steps in this direction are taken in \cite{MathQuart:repforgeothrs} where semantic groupoids are taken to be certain groupoids over \thry{S}). Yet another direction of research is to drop the restriction to theories with enough models and to topological, as opposed to localic, groupoids.

\item Instead of considering sheaves on the topological groupoid of models and isomorphisms of a theory, one can play the same game with the topological category of models and homomorphisms. For instance,  \cite{preprint:repforregthrs} shows that the classifying topos of a regular theory can be represented as equivariant sheaves on the topological category of models and homomorphisms, and relates this to Makkai's results in \cite{makkai:90}. This raises the same questions for topological categories as above for topological groupoids, such as what morphisms of topological categories induce inclusions of their respective equivariant sheaf toposes.

\item
 In \cite{makkai93} Makkai proves the descent theorem for Boolean pretoposes using the duality theory presented there. It is to be determined whether a shorter proof can be given using the constructions of this paper. It is also being explored to what extent notions and problems of classical model theory can be given a fruitful formulation in this setting. It is worth pointing out, in this context, that the `logical topology' of Definition \ref{Definition> Logical topology} has a long history (see \cite{grzemostryll:61} or the expository \cite{hjort:groupactionsandmodels}) (and perhaps  also to note the similarities between the construction in this paper of the generic topos model in equivariant sheaves on the groupoid of models and the definition of the Polish $S_{\infty}$-space of countable models in e.g.\ \cite{hjort:groupactionsandmodels}, where $S_{\infty}$ is the group of permutations on \thry{N} with pointwise convergence topology).
\item Regarding the groupoid of models $\thry{G}_{\theory}$ as the spectrum of the theory \theory, it is natural to look for a structure sheaf on $\thry{G}_{\theory}$  which represents \theory\ as global sections.  Indeed, when \theory\ is coherent, one can form such a structure sheaf $\tilde{\theory}$, roughly by taking $\tilde{\theory}(\bopen{A}) = \synt{C}{T}/A$, the slice category over the object $A$ in \synt{C}{T} determining the basic open \bopen{A} of $\thry{G}_{\theory}$. This sheaf is equivariant, and its equivariant global sections form a category equivalent to \synt{C}{T}.  The stalk of $\tilde{\theory}$ at a model \alg{M} is the complete diagram of \alg{M}.  This will be presented in \cite{BreinerAwodey:2012}.
\end{enumerate}

\section*{Acknowledgements}
 The second author was supported by the Eduard \v{C}ech Center for Algebra and Geometry (grant no.\ LC505) during parts of the research presented here. Both authors take the opportunity to thank Lars Birkedal and Dana Scott for their support.

%











\bibliographystyle{ieeetr}
\bibliography{bibliografi}
\end{document}